\newtheorem{cor}{Corrolary}[section]
\newtheorem{lemma}{Lemma}[section]
\newtheorem{theorem}{Theorem}[section]
\theoremstyle{definition}
\theoremstyle{Proposition}
\newtheorem{pro}{Proposition}[section]
\numberwithin{equation}{section}
\date{}
\begin{document}

\title{\bf Asymptotic behaviour of the doubly nonlinear equation $u_t=\Delta_p u^m$ on bounded domains }
\author{Diana Stan$^{\,a}$
~and~ Juan Luis Vazquez$^{\,b}$}

\maketitle

\begin{abstract}
We study the homogeneous Dirichlet problem for the doubly nonlinear equation $u_t = \Delta_p u^m$, where $p>1,\ m>0$ posed in a bounded domain in $\mathbb{R}^N$ with homogeneous boundary conditions and with non-negative and integrable data. In this paper we consider  the degenerate case $m(p-1)>1$ and the quasilinear case $m(p-1)=1$. We establish the large-time behaviour by proving the uniform convergence to a unique asymptotic profile and we also
give rates for this convergence.
\end{abstract}

\tableofcontents

\section{Introduction}
\quad We are interested in describing the behaviour of non-negative solutions of the homogenous Dirichlet problem for the doubly nonlinear equation (DNLE) for
large times. To be precise, we consider the following initial and boundary value problem
\begin{equation}\label{DNLE}
  \left\{ \begin{array}{ll}
  u_{t}(t,x) = \Delta_p u^m(t,x) &\text{for }t>0 \text{ and } x \in \Omega, \\
  u(0,x)  =u_0(x) &\text{for } x \in \Omega, \\
  u(t,x)=0   &\text{for }t>0 \text{ and } x \in \partial \Omega.
    \end{array}
    \right.
\end{equation}
for $m>0$, $p>1$. The problem is posed in a bounded domain $\Omega \in \mathbb{R}^N$ with initial data $u_0 \geq 0$, $u_0 \in L^1(\Omega)$ so that the solution $u(x,t)\ge 0$ too. The $p$-Laplacian operator is well-known to be defined as  $\Delta_p w := \text{div}(|\nabla w|^{p-2}\nabla w)$. We study the large time asymptotic behaviour of solutions to Problem \ref{DNLE} in the ``degenerate  case'' $m(p-1)>1$, also known as slow diffusion case, and  in the ``quasilinear case'' $m(p-1)=1$.

Let us first make some comments  concerning the range $m(p-1)>1$. When $p=2$ we recover the porous medium equation (PME) $u_t=\Delta  u^m$ with $m>1$ while, when $m=1$, we recover the degenerate $p$-Laplacian equation (PLE) $u_t=\Delta_p u$ with $p>2$, both well known equations in the literature. Notice that in this paper we only require $m(p-1)>1$, that also includes  cases where either $m\le 1$ or $ p\le 2$.
The PLE and the PME, as prototypes for degenerate diffusion, enjoy many common properties, such as finite speed of propagation and the existence of some special (self-similar) solutions, which play an important role in describing the asymptotic behaviour for general initial data. In this paper we complete the panorama by analyzing in detail the large-time properties of the degenerate DNLE, which combines the difficulties of both equations and offers some new challenges.

The quasilinear case $m(p-1)=1$ is also interesting to study since it inherits some common features of the Heat Equation, $u_t=\Delta u$ (which can be recovered when $m=1$ and $p=2$): this equation is invariant under scalar multiplication, and it is known that a general solution converges after rescaling to one of the (stationary) solutions of the eigenvalue problem for the $p-$Laplacian operator. However, when $(m,p)\neq (1,2)$ differences appear at the level of regularity and qualitative behaviour. While solutions of the HE are $C^{\infty}$ smooth, solutions of the DNLE have limited regularity due to the degenerate (singular) parabolic character of the equations at the level $u=0$ (see Fig. \ref{figparameters}).

 The remaining ``fast diffusion case'' $m(p-1)<1$ has quite different properties and deserves a separate study. Indeed,  we deal in this case with singular diffusions, and new phenomena  appear such as extinction in finite time, or lack of uniqueness of the asymptotic profile. All this  gives a different flavor to the analysis of the asymptotic behaviour.

As references for the previous theory for  the DNLE we mention \cite{ManfrediVespri} for the degenerate and quasilinear cases and \cite{MR1285092} for the singular case. We mention also that the asymptotic behaviour of the Cauchy problem on $\mathbb{R}^N$ has been studied in \cite{MR2602927}. Many of our results are new even in the $p$-Laplacian case $m=1$, $p>2$.
We also remark that most of the techniques needed to prove existence, uniqueness and other basic properties of the parabolic DNLE flow, can be taken from the books \cite{JLVSmoothing, JLVPorous} for the PME, and  \cite{DiBenedetto} for the PLE.
We also refer to \cite{Aronson1981378} and \cite{JLVsurvey} for a complete asymptotic analysis of the Dirichlet problem on bounded domains, for the PME when $m>1$.

\begin{figure}\label{figparameters}
  \centering
  \includegraphics[width=85mm,height=55mm]{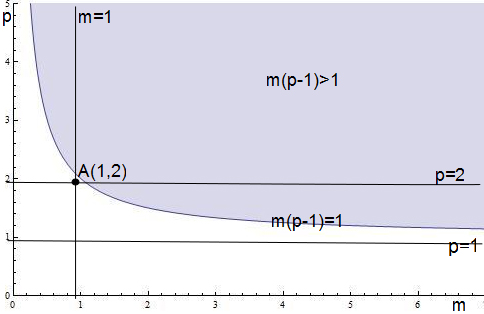}
  \caption{Ranges of parameters $m$ and $p$}
\end{figure}

\medskip

\noindent \textbf{Presentation of the main results.}
The purpose of this work is to analyze completely the asymptotic behaviour of the DNLE on Euclidean bounded domains. For convenience we assume that the boundary $\partial \Omega$ is $C^{2,\alpha}$ smooth. Since the cases $m(p-1)>1$ and $m(p-1)=1$ involve different techniques, we will present them separately.

\medskip

\noindent \textbf{Ia. The degenerate case $m(p-1)>1$.}
This work generalizes the asymptotic analysis carried out in the above mentioned papers \cite{Aronson1981378,JLVsurvey}. The outline of the theory is similar but the double nonlinearity asks for a number of interesting techniques. Throughout the study we will fix the notation $\mu=1/(m(p-1)-1)>0$, since this quantity will appear frequently.

 The asymptotic behaviour is better understood via the well-known method of rescaling and time transformation; let us introduce
\begin{equation}\label{V}
v(\tau,x)= t^{\mu}u (t,x), \quad t = e^{\tau}.
\end{equation}
In this way, Problem \ref{DNLE} is transformed into
\begin{equation}
  \left\{ \begin{array}{ll}
\displaystyle{ v_{\tau}(\tau,x)=\Delta_p v^m(\tau,x) + \mu\,v(\tau,x)},  &\text{for }\tau\in \mathbb{R} \text{ and } x \in  \Omega,  \\[2mm]
v(\tau,x)=0, &\text{for }\tau\in \mathbb{R} \text{ and } x \in \partial \Omega, \\
v(0,x)=v_0 &\text{for }x\in \Omega.
    \end{array}\right.
\end{equation}
In Section \ref{SectionAsympBehDNLE} we prove Theorem \ref{ThAsympBehDNLE}, which shows uniform convergence of the rescaled solution $v(\tau,x)$ to its unique asymptotic profile $f(x)$, as $\tau \rightarrow +\infty$. The stationary profile $f$ can be characterized as the positive solution to the corresponding stationary problem
\begin{equation*}\label{eq_f_degenerate_case}
\Delta_p f^m + \mu\,f=0, \text{ in }\Omega, \quad f=0 \text{ on }\partial \Omega.
\end{equation*}
The result of this Theorem is not surprising, but it does not appear explicitly in literature and it is needed to prove the next results. The techniques used in this step follow the work \cite{JLVsurvey} for the PME.

In Section \ref{Section_Rate_DNLE} we prove sharp rates of convergence of $v(\cdot,\tau)\to f$ as $\tau\to \infty$; this represents the first important result of this paper.

\medskip

\begin{theorem}\label{Th_rate_conv_DNLE}\textbf{(Weighted rate of convergence)}
Let $\Omega\subseteq {\mathbb R}^N$ be a  bounded domain of class $C^{2,\alpha}$, $\alpha>0$. Let $u(t,\cdot)$ be the weak solution to Problem {\rm \ref{DNLE}} corresponding to a nonnegative initial datum $u_0\in L^1(\Omega).$ Then for every $t_0>$ fixed there exists $C>0$ such that the following inequality holds
\begin{equation}\label{ineq_rate_conv}
\left|(1+t)^{\mu} u(t,x)-f(x)\right| \leq Cf(x)(1+t)^{-1} \quad \text{for all }t\ge t_0 \mbox{ and } x\in\Omega,
\end{equation}
where $C$ depends only on $p,m,N,u_0,\Omega$ and $t_0$.
\end{theorem}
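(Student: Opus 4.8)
The plan is to upgrade the bare convergence $v(\tau,\cdot)\to f$ provided by Theorem~\ref{ThAsympBehDNLE} into the weighted rate \eqref{ineq_rate_conv} by sandwiching $v$ between two members of an explicit one–parameter family of separate–variables solutions of the rescaled problem, in the spirit of Aronson--Peletier \cite{Aronson1981378} for the PME. Working throughout with $v(\tau,x)=t^{\mu}u(t,x)$, $t=e^{\tau}$, and writing $\gamma:=m(p-1)-1=1/\mu>0$, the first observation is that for any constant $a_0>0$ the function $a(\tau)f(x)$ is an \emph{exact} solution of $v_\tau=\Delta_p v^m+\mu v$ with homogeneous boundary data, provided $a$ solves the scalar ODE
\begin{equation*}
a'(\tau)=\mu\,a(\tau)\bigl(1-a(\tau)^{\gamma}\bigr),\qquad a(\tau_1)=a_0,
\end{equation*}
since $\Delta_p\bigl((af)^m\bigr)=a^{m(p-1)}\Delta_p f^m=-\mu\,a^{m(p-1)}f$. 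The substitution $y=a^{-\gamma}$ linearises this ODE to $y'=-(y-1)$ (here the identity $\gamma\mu=1$ is used), giving $a(\tau)^{-\gamma}-1=(a_0^{-\gamma}-1)e^{-(\tau-\tau_1)}$; hence $a(\tau)\to 1$ monotonically as $\tau\to\infty$ for every $a_0\in(0,\infty)$, with $|a(\tau)-1|\le C(a_0)\,e^{-(\tau-\tau_1)}$, the rate being exactly $e^{-\tau}$ because $\mu\gamma=1$.

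Next I would fix $\tau_1$ large (with $t_1=e^{\tau_1}$ finite but chosen so that $u$ has by then developed its sharp two–sided boundary behaviour; the range $t_0\le t\le t_1$ is handled at the end) and use Theorem~\ref{ThAsympBehDNLE} together with the positivity and boundary regularity of the flow and of the stationary profile — both $u(\tau_1,\cdot)$ and $f$ vanish on $\partial\Omega$ like $\mathrm{dist}(x,\partial\Omega)^{1/m}$, by the interior and boundary estimates for the $p$‑Laplacian flow and for $\Delta_p f^m+\mu f=0$ — to obtain the sandwich
\begin{equation*}
a_1 f(x)\ \le\ v(\tau_1,x)\ \le\ A_1 f(x)\qquad\text{on }\Omega,
\end{equation*}
for suitable $0<a_1\le 1\le A_1$; the upper inequality may even be taken with $A_1=1$ from the universal ``friendly giant'' bound $v(\tau,\cdot)\le f$, valid on bounded domains for all $\tau$. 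Applying the comparison principle for the rescaled equation — which holds because the zeroth–order term $\mu v$ is a harmless Lipschitz perturbation of the monotone operator $v\mapsto-\Delta_p v^m$ — to the ordered triple $a_-(\tau)f$, $v(\tau,\cdot)$, $a_+(\tau)f$, where $a_\mp$ solve the ODE with $a_\mp(\tau_1)=a_1,A_1$, yields
\begin{equation*}
a_-(\tau)f(x)\ \le\ v(\tau,x)\ \le\ a_+(\tau)f(x),\qquad \tau\ge\tau_1,\ x\in\Omega.
\end{equation*}
Therefore $|v(\tau,x)-f(x)|\le\max\{1-a_-(\tau),\,a_+(\tau)-1\}\,f(x)\le C\,e^{-\tau}f(x)$, and undoing the change of variables ($e^{-\tau}=1/t$, $v=t^\mu u$, and replacing $t^\mu$ by $(1+t)^\mu$ at the cost of an $O(1/t)$ term since $((1+t)/t)^\mu=1+O(1/t)$) gives \eqref{ineq_rate_conv} for $t\ge t_1$. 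On the compact range $t_0\le t\le t_1$ one argues directly: $(1+t)^\mu u(t,x)$ and $f(x)$ are uniformly bounded on interior compacts (where $f$ is bounded below) and near $\partial\Omega$ one has the crude bound $|(1+t)^\mu u-f|\le f$, while $(1+t)^{-1}\ge(1+t_1)^{-1}$ there; enlarging $C$ accordingly finishes the proof, all constants depending only on $p,m,N,u_0,\Omega,t_0$.

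The main obstacle is the lower sandwich $v(\tau_1,\cdot)\ge a_1 f$: because the diffusion has finite speed of propagation in the range $m(p-1)>1$, mere $L^\infty$‑closeness to $f$ from Theorem~\ref{ThAsympBehDNLE} is not enough near $\partial\Omega$, where both sides vanish and where $u$ is identically zero for small times. One genuinely needs the positivity of the flow with the \emph{sharp} boundary rate once the support has filled $\Omega$ — a global Harnack type statement which is the doubly nonlinear analogue (combining the $p$‑Laplacian with the power $m$) of the barrier construction of Aronson--Peletier, and which in any case underlies the identification of $f$ as the asymptotic profile. Everything else — the ODE integration, the comparison principle, and the bookkeeping between $\tau$ and $t$ and between $t^\mu$ and $(1+t)^\mu$ — is routine once that ingredient is available.
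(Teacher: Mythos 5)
Your proposal is, at heart, the same strategy as the paper's: sandwich $u$ between two separate--variables solutions $(s+t)^{-\mu}f(x)$ and read off the $O(t^{-1})$ weighted rate. Your ODE reformulation is a clean and equivalent repackaging of it: the ansatz $v(\tau,x)=a(\tau)f(x)$ with $a'=\mu a(1-a^\gamma)$ and $\gamma\mu=1$ integrates precisely to $a(\tau)=(1+se^{-\tau})^{-\mu}$, which in the original variables is $t^\mu(s+t)^{-\mu}$, i.e.\ exactly the separate--variables subsolution/supersolution $U(t,x;s)=(s+t)^{-\mu}f(x)$ used in the paper's Theorems \ref{Th_upper_bound} and \ref{Th_lower_bound}. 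The upper bound (taking $A_1=1$, i.e.\ $s=0$) and the bookkeeping between $t^\mu$ and $(1+t)^\mu$ are handled correctly.

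The genuine gap is the lower sandwich $v(\tau_1,\cdot)\ge a_1 f$, and you have correctly identified it as the crux, but your attempt to dispatch it is not right. Two problems. First, you invoke "interior and boundary estimates for the $p$-Laplacian flow" to assert $u(\tau_1,\cdot)\asymp d(x)^{1/m}$, but no such boundary estimate for the evolution is previously available: the data $u_0\in L^1$ may be compactly supported, the solution may be identically zero near $\partial\Omega$ on $[0,T']$ by finite propagation, and the boundary H\"older regularity recalled in the Appendix gives upper bounds only. Second, your claim that the needed global Harnack statement "in any case underlies the identification of $f$ as the asymptotic profile" is false: Theorem \ref{ThAsympBehDNLE} is proved by monotonicity plus uniform convergence and uses no quantitative positivity near $\partial\Omega$; uniform convergence of $v\to f$ says nothing about the ratio $v/f$ near the boundary where both vanish. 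So the lower bound really is \emph{new} input, not something already in hand. The paper supplies it in Section \ref{Section_Rate_DNLE} in three steps: inner positivity $u(T_1,\cdot)\ge\epsilon$ on $\Omega_{I,\delta}$ from the (interior) uniform convergence (Prop.\ \ref{pro_interior_positivity}); propagation of positivity up to $\partial\Omega$ by sliding compactly supported Barenblatt subsolutions under $u$ (Prop.\ \ref{pro_posit_boundary}); and crucially the \emph{sharp} boundary behaviour $u^m(T_4,x)\ge\omega\,d(x)$ (and hence $u\ge k_2 f$) obtained by comparing with the intermediate self-similar subsolutions $\mathcal V$ of Section \ref{Sim_Sol_Subsection}, whose profile has a \emph{transversal} zero crossing $g'(a)<0$ --- Barenblatt profiles land flat and cannot give a linear-in-$d(x)$ lower bound for $u^m$. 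Without one of these ingredients (or an equivalent boundary Harnack), the inequality $u(T,\cdot)\ge k_2 f$ that seeds your lower ODE comparison is unproved and the argument does not close.

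A small additional remark: the passage from $u(t_0,\cdot)\in L^1$ to the bounded, continuous function needed to start the ODE comparison relies on the smoothing effect $u(t,\cdot)\le C t^{-\mu}$ and H\"older regularity after a time delay, and your reduction to $t\in[t_0,t_1]$ at the end must also use the boundary estimate $f\ge C_1 d^{1/m}$ and the boundary H\"older bound on $u$ to control $|(1+t)^\mu u-f|\le Cf$ uniformly near $\partial\Omega$; the crude bound "$|(1+t)^\mu u-f|\le f$" that you write is not available until after the upper bound $u\le t^{-\mu}f$ is established, which you do use, so this is fine but should be stated in that order.
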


In other words
\begin{equation}
u(t,x)=t^{-\mu}f(x)+O(f(x)\,t^{-\mu-1}).
\end{equation}

\noindent\textbf{Remarks. }(i) \textit{Sharpness. }Estimate \eqref{ineq_rate_conv} is sharp, since it is satisfied with equality when we consider the separate variable solution,
\begin{equation}\label{svs}
U(t,x;s)=(s+t)^{\mu}f(x) ,
\end{equation}
with parameter $s>0$.

\medskip
\noindent (ii)\textit{Convergence in relative error. }Let $U(t,x):=U(t,x;0)$  be the separate variable solution \eqref{svs} and let
\[
v(\tau,x)= t^{\frac{1}{m(p-1)-1}}u (t,x)\,, \qquad t = e^{\tau}
\]
be the \emph{rescaled solution} given in \eqref{rescaledEq_DNLE}. We can rewrite \eqref{eq_rate_conv_DNLE} in the following form:
\medskip

\begin{cor}\textbf{(Convergence in relative error)}
Under the assumptions of Theorem {\rm \ref{Th_rate_conv_DNLE}}, if $u$ denotes the solution of Problem {\rm \ref{DNLE}}, then
\begin{equation}
\left\| \frac{u(t,\cdot)}{U(t,\cdot)} -1 \right\|_{L^{\infty}(\Omega)}=
\left\| \frac{v(\tau,\cdot)}{f(\cdot)} -1 \right\|_{L^{\infty}(\Omega)}\le \mathcal{C}t^{-1}.
\end{equation}
\end{cor}

The main idea will be to compare the general solution $u$ of Problem \ref{DNLE} with functions belonging to special families, more exactly self-similar solutions of the DNLE.
We will try to follow the strategy used in the papers \cite{Aronson1981378} and \cite{JLVsurvey} for the case $m>1$ of the PME, and solve the problems caused by the nonlinearity of the $p$-Laplacian operator.

\medskip

\noindent {\bf Ib. Selfsimilar study.} In the process of proving the above results we became interested in the existence and properties of self-similar solutions of the DNLE, that is, functions of the form
$$
\mathcal{U}(t,x)=(t+s)^{-\alpha}h(r),\quad r=|x|(t+s)^{-\beta},
$$
where $\alpha$, $\beta$ are positive parameters and $h$ a real valued function satisfying a certain ODE. As a by-product we give a formal characterization of such solutions. Selfsimilar solutions are often used as a way of indicating the behavior of a general solution of the DNLE.

\medskip

\noindent \textbf{II. The quasilinear case.} In Section \ref{Section_Quasilinear} we study the asymptotic behaviour of solutions of the DNLE when $m(p-1)=1$. Our study uses the preliminary work  \cite{ManfrediVespri} and requires a delicate barrier technique inspired from the work \cite{BDGV09} on fast diffusion stabilization. To be precise, we consider the rescaling
\begin{equation}
v(t,x)=e^{\lambda_1 t}S(x),
\end{equation}
where $\lambda_1$ is the first eigenvalue of the $p$-Laplacian operator $\Delta_p$, \cite{Anane, {Lindqvist}}. Notice that here there is no time transformation, but only rescaling.

This problem was previously studied by Manfredi and Vespri in \cite{ManfrediVespri},  where the authors obtained the convergence, along time subsequences, of $v(t,x)$ to a possible asymptotic profile. At the same time they proved that the set of asymptotic profiles is included in the set of solutions of the corresponding elliptic problem
\begin{equation}\label{eigenvaluePlaplace}
-\Delta_p V = \lambda_1 V^{p-1} \text{ in } \Omega,  \quad V=0 \text{ on } \partial \Omega,
\end{equation}
and moreover $V>0$ in $\Omega$. It is known that when $\lambda=\lambda_1$ then the set of solutions is a linear set, i.e., they have the form $\{ cV_1: c>0\}$, where $V_1$ is a particular normalized solution (a normalized $p$-eigenfunction), cf.  \cite{Anane,Lindqvist}.

In this work, we complete the asymptotic analysis by proving uniform convergence of the rescaled solution  $v(x,t)$ to an unique asymptotic profile; this happens for all times $t\to\infty$ and we also prove a relative error version for this convergence. Our main result in the quasilinear case is the following.

\medskip

\begin{theorem}\label{Th_conv_Rel_error_quasilinear}
Consider $m(p-1)=1$. Let $\Omega\subseteq {\mathbb R}^N$ be a bounded connected domain of class $C^{2,\alpha}$, $\alpha>0$. Let $u(t,\cdot)$ be a weak solution to the Dirichlet Problem {\rm \ref{DNLE}} corresponding to the nonnegative initial datum $u_0\in L^1(\Omega)$.
Let $v(t,x)=e^{\lambda_1 t}u(x,t)$. Then there exists a unique constant $c>0$ such that
\begin{equation}\label{convREFquasilineal}
\lim_{t \rightarrow \infty}\left\| \frac{u(t,\cdot)}{\mathcal{U}(t,\cdot)}-1 \right\|_{L^{\infty}(\Omega)}=\lim_{t\rightarrow \infty}\left\| \frac{v(t,\cdot)}{S(\cdot)} -1\right\|_{L^{\infty}(\Omega)}=0,
\end{equation}
where ${\cal U}(x,t)=e^{-\lambda_1 t}S(x)$  and   $S^m=c_*f$.
\end{theorem}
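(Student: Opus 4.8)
The plan is to combine the subsequential convergence result of Manfredi–Vespri with a barrier argument in relative error to upgrade it to full convergence to a single profile. First I would recall that by \cite{ManfrediVespri}, along any time sequence $t_n\to\infty$ the rescaled solution $v(t_n,\cdot)=e^{\lambda_1 t_n}u(t_n,\cdot)$ converges uniformly to some $c\,V_1$, where $V_1$ is the normalized $p$-eigenfunction; the content of the theorem is that the constant $c$ does not depend on the sequence and that the convergence is genuine (all $t\to\infty$) and holds in relative error. Since $V_1>0$ in $\Omega$ and the boundary is $C^{2,\alpha}$, Hopf's lemma gives $V_1\asymp \mathrm{dist}(x,\partial\Omega)$, so that $S=(c_*f)^{1/m}$ is comparable to $V_1$; this is what makes a relative-error statement meaningful up to the boundary.

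The core is a comparison argument. For a parameter $\kappa>0$, the function $\kappa\,e^{-\lambda_1 t}V_1(x)$ is an exact (separate-variable) solution of the equation in the quasilinear case $m(p-1)=1$, because the eigenvalue equation \eqref{eigenvaluePlaplace} and the homogeneity $\Delta_p(\kappa w)^m=\kappa^{m(p-1)}\Delta_p w^m=\kappa\,\Delta_p w^m$ exactly balance the exponential factor. Hence, defining
\begin{equation}
\overline{c}(t)=\inf\{\kappa>0:\ u(t,\cdot)\le \kappa e^{-\lambda_1 t}V_1\},\qquad
\underline{c}(t)=\sup\{\kappa>0:\ u(t,\cdot)\ge \kappa e^{-\lambda_1 t}V_1\},
\end{equation}
the comparison principle for the DNLE (available from the basic theory in \cite{ManfrediVespri, DiBenedetto}) shows that $\overline{c}(t)$ is nonincreasing and $\underline{c}(t)$ is nondecreasing in $t$. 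The key sub-step is to show that both quantities are finite and positive for $t$ large: finiteness of $\overline c(t)$ and positivity of $\underline c(t)$ follow from interior positivity estimates plus a boundary barrier (here is where the delicate barrier technique inspired by \cite{BDGV09} enters — one builds super/subsolutions of the form $\kappa e^{-\lambda_1 t}V_1(1+\varepsilon\,\varphi)$ near $\partial\Omega$ to control the relative error down to the boundary, using the $C^{2,\alpha}$ regularity of $\partial\Omega$). Monotonicity then forces $\overline c(t)\downarrow c^+$ and $\underline c(t)\uparrow c^-$ with $0<c^-\le c^+<\infty$.

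To finish I would show $c^-=c^+=:c$ and identify this common value with the limit along subsequences. For the identification, take any $t_n\to\infty$ along which $v(t_n,\cdot)\to c\,V_1$ uniformly; since $\underline c(t_n)V_1\le v(t_n,\cdot)\le \overline c(t_n)V_1$, dividing by $V_1$ and letting $n\to\infty$ gives $c^-\le c\le c^+$, and the limit is sequence-independent precisely when $c^-=c^+$. The equality $c^-=c^+$ is the main obstacle and is where the barrier argument does real work: one argues by contradiction, assuming $c^-<c^+$, and uses the strict monotonicity gained from an improved barrier — roughly, if at some time the relative error $\|v(t,\cdot)/V_1-1\|_\infty$ is bounded away from zero, the strong maximum principle / Harnack-type estimate for the (degenerate) equation produces a definite contraction of the gap $\overline c(t)-\underline c(t)$ over a unit time step, forcing $\overline c(t)-\underline c(t)\to 0$. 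Combined with the sandwich $\underline c(t)V_1\le v(t,\cdot)\le\overline c(t)V_1$, this yields $\|v(t,\cdot)/V_1-1\|_{L^\infty(\Omega)}\to 0$, hence $\|v(t,\cdot)/S-1\|_{L^\infty(\Omega)}\to 0$ after the comparison $S\asymp V_1$, and the first equality in \eqref{convREFquasilineal} is just the definition of $\mathcal U$ and $v$. I expect the technical heart to be making the contraction-of-the-gap step rigorous uniformly up to $\partial\Omega$, since the equation degenerates there and naive comparison loses the relative-error control; this is exactly the point where the fast-diffusion-style barriers of \cite{BDGV09} must be adapted to the doubly nonlinear setting.
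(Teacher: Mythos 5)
Your setup is essentially the paper's: you rescale by $e^{\lambda_1 t}$, use the Manfredi--Vespri subsequential convergence to a positive multiple of the $p$-eigenfunction, introduce the two-sided constants $\underline c(t)\le \overline c(t)$ (monotone by comparison with separate-variable solutions), and note that any subsequential limit constant $c$ must lie in $[c^-,c^+]$. That part agrees with Section~\ref{Section_Quasilinear}. One minor slip: since $m(p-1)=1$ the separate-variable solution is $e^{-\lambda_1 t}f(x)$ with $f=V_1^{1/m}$ (the paper's $f$ solves $\Delta_p f^m+\lambda_1 f=0$), not $e^{-\lambda_1 t}V_1(x)$ directly; the comparison should be carried out at the level of $u$ versus $\kappa e^{-\lambda_1 t}f$, equivalently $u^m$ versus $\kappa^m e^{-\lambda_1 t}V_1$.

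The real gap is in the mechanism you propose for $c^-=c^+$. You invoke a ``Harnack-type contraction of the gap $\overline c(t)-\underline c(t)$ over a unit time step,'' which, if it held uniformly near the boundary, would yield an exponential rate. But the paper explicitly records that a rate of convergence in the quasilinear case is an open problem (see Section 7), so a contraction argument of that strength is not available and is not what the barrier technique actually delivers. The paper's argument is qualitatively different and weaker in a crucial way: it constructs explicit affine-in-$V$ barriers $\Phi(t,x)=C-BV(x)-A(t-t_0)$ and $\Psi(t,x)=C'+B'V(x)-A'(t-t_0)$ that are super/subsolutions of the PDE for the relative error function $\phi=v^m/S^m-1$ only on a thin parabolic collar near $\partial\Omega$, and matches them there with the inner uniform convergence furnished along the subsequence $\tau_n$ from Theorem~\ref{pro_unif_conv_quasilinealcase}. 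This produces, for each $\epsilon>0$, a single time $\tau_n+T_1(\epsilon,\delta)$ at which $\phi\le 2\epsilon$ on all of $\Omega$ (Lemma~\ref{lemma_comp_phi_Phi} and \eqref{estim_phi_T1}); one then applies the parabolic comparison principle with the separate-variable solution $(1+2\epsilon)e^{-\lambda_1 t}S$ to propagate $v\le (1+2\epsilon)S$ to \emph{all} later times, which forces $\overline c_\infty\le c_*$. The symmetric lower barrier gives $\underline c_\infty\ge c_*$. No quantitative gap-contraction or boundary Harnack estimate is needed, and none is proved.

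So the structural scaffolding you laid out is correct, but the step you flag yourself as ``the technical heart'' — the contraction of the gap via a Harnack estimate in relative error up to $\partial\Omega$ — is not filled in, is stronger than what is known, and is replaced in the paper by the concrete barrier/propagation argument above. If you want to align with the paper, replace the contradiction-via-contraction step with: construct the explicit super/subsolutions $\Phi,\Psi$ near $\partial\Omega$, verify the supersolution inequality using $-\Delta_p V=\lambda_1 V^{p-1}$ and the bounds $V\asymp d(x)$, $|\nabla V|$ bounded above and below near the boundary (Lemma~\ref{lemma_properties_V}), combine with the inner convergence to bound $\phi$ at one time, and then propagate by comparison.
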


In order to clarify the result, let us point out that the result states that there is a unique asymptotic profile of the form $S=V^{1/m}$, where $V$ is one of the positive solutions of \eqref{eigenvaluePlaplace}. In other words, there is a unique $c=c(u_0)>0$ such that $V=c V_1 =:c f^m$. Though the asymptotic constant $c$ depends on the data, there is no explicit or semi-explicit formula to compute it. This is a typical occurrence issue of nonlinear evolution problems, see a discussion of the issue in \cite{Kamin_Peletier_Vazquez_1991} when studying the Barenblatt  equation for elastoplastic filtration, a quite different model of nonlinear heat flow.
In order to prove Theorem \ref{Th_conv_Rel_error_quasilinear}, the methods used in the degenerate case do not work anymore and therefore we apply a different method, a barrier argument, based on the one used in \cite{BDGV09} to prove convergence in relative error for the fast diffusion equation.

\section{Asymptotic behaviour for $m(p-1)>1$}\label{SectionAsympBehDNLE}

\subsection{Preliminaries}
In order to present the asymptotic behaviour, we first need to introduce some preliminary results concerning
 the smoothing effects of the DNLE.
\medskip

\noindent {\sc Notations.}  $$Q_T=(0,T)\times \Omega, \ Q=(0,\infty)\times \Omega, \ d(x)=\text{dist}(x,\partial \Omega).$$

The notion of weak solution is defined in the standard sense, we refer to \cite{DiBenedetto}. In addition, it is known by standard semigroup theory that there exists a unique non-negative weak solution $u$ of Problem \ref{DNLE} with good regularity properties and satisfies the Maximum Principle.

Now, we illustrate specific properties concerning the smoothing effects of the DNLE, properties that will be needed
in our proofs (we refer for example to \cite{JLVSmoothing}). In what follows we assume that $\Omega\subseteq {\mathbb R}^N$ is a  $C^{2,\alpha}$ domain, for some $\alpha\in(0,1)$. Let $u(t,\cdot)$ be a weak solution to Problem \ref{DNLE} corresponding to the nonnegative initial datum $u_0\in L^1(\Omega).$

\noindent  \textbf{1. B\'{e}nilan-Crandall type estimates}
\begin{enumerate}
  \item If $m(p-1)>1$, then
\begin{equation}\label{estim_derivative1_DNLE}
u_{t} \geq - \mu t^{-1} u
\end{equation}
in the sense of distributions.
  \item If $m(p-1)<1$, then
\begin{equation}\label{estim_derivative2_DNLE}
u_{t} \leq  \mu t^{-1} u
\end{equation}
in the sense of distributions.
\end{enumerate}

  Also, in the case $m(p-1)>1$, the weak solution $u$ of Problem \ref{DNLE} verifies the following estimate
\begin{equation}\label{Benilan_Crandall_2_DNLE}
\|u_{t}(t+s,x) \|_{1} \leq  \mu (t+s)^{-1} \|u(s)\|_1.
\end{equation}
This inequality is a property of viewing the solution $u(t)$ of Problem \ref{DNLE} with initial data $u_0$ as a semigroup $u(t)=T_t(u_0)$. As an immediate consequence we observe that
\begin{equation}\label{Benilan-Crandall_DNLE}
\|u_{t}(t,x) \|_{1} \leq \mu t^{-1}\|u_0\|_1
\end{equation}
In addition, using estimate \eqref{Benilan_Crandall_2_DNLE} with $t/2$ instead of $t$ and $s$ one can obtain that
\begin{equation}\label{est_derivative_u}
\|u_{t}(t,x) \|_{1} \leq  \mu t^{-1} \|u(t/2)\|_1.
\end{equation}

\noindent  \textbf{2. Smoothing effects}

\noindent In the case $m(p-1)>1$, the solution $u$ of Problem \ref{DNLE} satisfies the following
\begin{equation}\label{est_norm_Lr_DNLE}
\|u(t,\cdot)\|_{L^{r}(\Omega)} \leq C t^{-\mu}, \ t \in (0,+\infty), \ r\geq 1.
\end{equation}
Combining the previous estimates we obtain the absolute bound
\begin{equation}\label{smoothing3_DNLE}
\|u(t,\cdot)\|_{L^{\infty}(\Omega)} \leq C t^{-\mu}, t \in (0,+\infty).
\end{equation}

As a consequence, we can improve the estimate \eqref{Benilan_Crandall_2_DNLE} by using \eqref{smoothing3_DNLE} and we get
\begin{equation}\label{est3u_Benilan_Crandall_type_DNLE}
\|u_{t}(t,x) \|_{1} \leq  C\mu t^{-1}  (t/2)^{-\mu} \leq C t ^{-1-\mu}.
\end{equation}

\subsection{Asymptotic behaviour: Uniform convergence to the asymptotic profile}
 Now we are ready to state the first important result of the paper.
\begin{theorem}\label{ThAsympBehDNLE}
Let $\Omega\subseteq {\mathbb R}^N$ be a bounded domain of class $C^{2,\alpha}$,$\alpha>0$. Let $u(t,\cdot)$ be a weak solution to Problem \ref{DNLE} corresponding to the nonnegative initial datum $u_0\in L^1(\Omega).$
Then there exists a unique self-similar solution of Problem \ref{DNLE}
$$ U(t,x) = t ^{-\mu} f(x), \ t \in (0, +\infty), x \in \Omega,$$
such that
\begin{equation}\label{conv1_DNLE}
 \lim_{t \rightarrow + \infty}   t ^{\mu}|u(t,x) - U(t,x)| =
 \lim_{t \rightarrow + \infty}  | t ^{\mu}u(t,x) - f(x)| = 0,
 \end{equation}
unless $u$ is trivial, $u\equiv0$. The convergence is uniform in space and monotone non-decreasing in time. Moreover, the asymptotic profile $f$ is the unique non-negative solution of the stationary problem:
\begin{equation} \label{eq_f_DNLE}
   \displaystyle{ \Delta_p f^m(x) +\mu f(x)=0,  \ x \in \Omega,} \ \
       f(x)=0, \ x \in \partial \Omega.
  \end{equation}
\end{theorem}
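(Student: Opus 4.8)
The plan is to exploit the change of variables $v(\tau,x)=t^{\mu}u(t,x)$, $t=e^{\tau}$, which turns Problem \ref{DNLE} into the equation $v_\tau = \Delta_p v^m + \mu v$ with a monotone structure, and to combine a monotonicity-in-$\tau$ argument with a compactness argument to force convergence to a stationary profile. First I would use the B\'{e}nilan--Crandall estimate \eqref{estim_derivative1_DNLE}, $u_t \ge -\mu t^{-1} u$, to show that the rescaled orbit $\tau\mapsto v(\tau,\cdot)$ is monotone non-decreasing in time: differentiating $v = t^\mu u$ in $\tau$ gives $v_\tau = t^{\mu+1}(u_t + \mu t^{-1} u) \ge 0$ (using $dt/d\tau = t$), so $v(\tau,x)$ increases pointwise in $\tau$. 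Simultaneously, the smoothing estimate \eqref{smoothing3_DNLE} gives $\|v(\tau,\cdot)\|_{L^\infty(\Omega)} = t^\mu \|u(t,\cdot)\|_{L^\infty(\Omega)} \le C$, a uniform bound. A monotone, uniformly bounded family has a pointwise limit $f(x) := \lim_{\tau\to\infty} v(\tau,x)$, which by monotone convergence also holds in $L^1(\Omega)$ and (after upgrading, see below) uniformly.

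Next I would identify the limit $f$ as a solution of the stationary problem \eqref{eq_f_DNLE}. The natural tool is the $\omega$-limit set: using the interior regularity theory for the DNLE (H\"{o}lder gradient estimates in the style of \cite{DiBenedetto}) together with the uniform $L^\infty$ bound and the decay $\|u_t(t,\cdot)\|_1 \le C t^{-1-\mu}$ from \eqref{est3u_Benilan_Crandall_type_DNLE}, one obtains that the time-translates $v(\tau+\cdot,\cdot)$ are precompact in $C(\overline{Q_T})$ for each finite window, and that $v_\tau \to 0$ in a suitable sense. Passing to the limit in the weak formulation of $v_\tau = \Delta_p v^m + \mu v$ then shows that $f$ is a weak solution of $\Delta_p f^m + \mu f = 0$ in $\Omega$ with $f=0$ on $\partial\Omega$; elliptic regularity up to the $C^{2,\alpha}$ boundary then gives $f$ the expected regularity, and the uniform (not merely pointwise) convergence $v(\tau,\cdot)\to f$ follows from the compactness together with the monotonicity (Dini-type argument). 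Setting $U(t,x) = t^{-\mu} f(x)$ recovers \eqref{conv1_DNLE}, and one checks directly that $U$ solves the DNLE.

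It remains to handle the dichotomy and uniqueness. For the dichotomy, if $f \equiv 0$ one must argue $u \equiv 0$: this uses a lower barrier — if $u_0 \not\equiv 0$, by the strong maximum principle $u(t_1,\cdot) > 0$ in $\Omega$ for some $t_1>0$, and then comparison from below with a small separate-variable solution $U(t,x;s) = (s+t)^\mu f_*(x)$ (once we know a positive stationary profile $f_*$ exists) forces $t^\mu u(t,x)$ to stay bounded below by a positive function, contradicting $f\equiv 0$. For uniqueness of the stationary profile among nonnegative solutions: the map $f \mapsto$ (solution operator) for $-\Delta_p f^m = \mu f$ at the critical homogeneity $m(p-1)$ can be compared via a scaling/monotonicity argument — two positive solutions $f_1, f_2$ must be ordered after scaling by the nonlinear structure, and since the equation has the exact homogeneity that precludes a free multiplicative constant (unlike the quasilinear case), they coincide. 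This last point — establishing uniqueness of the positive stationary solution, and equivalently pinning down which profile the orbit selects — is the main obstacle; it is where the "double nonlinearity" genuinely complicates the classical PME argument of \cite{JLVsurvey}, since one cannot rely on the linearity of $\Delta$ and must instead use a Picone-type inequality or a careful sub/supersolution comparison adapted to $\Delta_p$.
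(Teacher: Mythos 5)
Your outline follows the paper's broad architecture (rescaling, monotonicity from the B\'enilan--Crandall estimate, boundedness from the smoothing estimate, pointwise limit, identification of the limit, uniqueness, upgrade to uniform convergence), but there are two genuine gaps.

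First, the passage to the limit in the nonlinear term $\Delta_p v^m$ is not handled. You write that precompactness of the time-translates in $C(\overline{Q_T})$ and decay of $v_\tau$ let you ``pass to the limit in the weak formulation,'' but the weak formulation involves $\int |\nabla v^m|^{p-2}\nabla v^m\cdot\nabla\phi$, and the energy estimate only gives \emph{weak} convergence of $\nabla v^m$ in $L^p$. Weak convergence of gradients does not allow you to identify the weak limit of $|\nabla v^m|^{p-2}\nabla v^m$ as $|\nabla f^m|^{p-2}\nabla f^m$ — this is exactly where the nonlinearity bites. You gesture at H\"older gradient estimates, but a uniform $C^{1,\alpha}$ theory up to the degeneracy set and boundary is delicate and not something you can simply cite. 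The paper handles this with a more robust two-step argument: it first shows $\nabla v^m(\tau,\cdot)$ is Cauchy in measure, using the algebraic inequalities $\langle|a|^{p-2}a-|b|^{p-2}b,\,a-b\rangle \ge \gamma|a-b|^p$ (or the $1<p<2$ analogue) together with integration by parts against $v^m(\tau_1)-v^m(\tau_2)$; this plus the uniform $L^p$ bound gives strong $L^q$ and a.e.\ convergence of $\nabla v^m$. It then invokes Brezis's lemma on maximal monotone operators (Lemma \ref{brezis}) to conclude that the weak limit of $|\nabla v^m|^{p-2}\nabla v^m$ is $|\nabla f^m|^{p-2}\nabla f^m$. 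This is a substantial portion of the proof and your proposal has no substitute for it.

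Second, you flag uniqueness of the positive stationary profile as ``the main obstacle'' and offer only a sketch (Picone-type inequality or sub/supersolution comparison ``adapted to $\Delta_p$''). The paper's argument is concrete and you should be aware of it: given two profiles $f_1, f_2$, compare the two separable solutions $U_1(t,x)=t^{-\mu}f_1(x)$ (formally infinite at $t=0$) and $U_2(t,x)=(t+s)^{-\mu}f_2(x)$ for $s>0$. Show $\frac{d}{dt}\int_\Omega (U_2-U_1)_+\,dx\le 0$ by multiplying the difference of the equations by a smooth approximation of $\mathrm{sign}_0^+(U_1^m-U_2^m)$, integrating by parts, and using the same algebraic $p$-Laplacian inequalities to control the sign of the resulting term. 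Since $U_1\to\infty$ pointwise as $t\to 0$, the positive part vanishes in the limit $t\to 0$; hence $U_2\le U_1$ for all $t>0$, i.e.\ $f_2\le \left((t+s)/t\right)^\mu f_1$, and letting $s\to 0$ gives $f_2\le f_1$. Swapping roles gives $f_1=f_2$. Your ``scaling/monotonicity'' remark points in this direction but stops short of a proof, and the alternative routes you name (Picone, sub/supersolutions) would also require a detailed verification that is not present.

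The remaining steps you describe — monotonicity of $v$ in $\tau$, the uniform bound, the Dini/Ascoli-type upgrade to uniform convergence using interior H\"older continuity — are in line with the paper (the paper phrases the last step via the fixed-rate rescaling $u_\lambda(t,x)=\lambda^\mu u(\lambda t,x)$ and Ascoli--Arzel\`a, which is equivalent in spirit). The dichotomy argument you sketch (if $f\equiv 0$ then $u\equiv 0$) is fine in outline.
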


\begin{proof}

\textbf{$1$.  The main tools} are the a-priori estimates \eqref{smoothing3_DNLE} and \eqref{estim_derivative1_DNLE}
\begin{equation}\label{est1u_DNLE}
u(t,x) \leq C t ^{-\mu},
\end{equation}
and
\begin{equation}\label{est2u_DNLE}
u_{t}(t,x) \geq  - C\mu t^{-1}u.
\end{equation}
These estimates make sense also in the classical way since the weak solution $u$ of Problem \ref{DNLE} is in fact locally H\"{o}lder continuous.

\noindent \textbf{$2$. Rescaled orbit and equation.}
We use the rescaling
\begin{equation}\label{scaling_formulas1_DNLE}
u(t,x) = t^{-\mu} v(\tau,x),  \ t=e^{\tau} .
\end{equation}
As we have seen, Problem \ref{DNLE} is mapped into the \emph{rescaled problem}:
\begin{equation}\label{rescaledEq_DNLE}
  \left\{ \begin{array}{ll}
\displaystyle{ v_{\tau}(\tau,x)=\Delta_p v^m(\tau,x) + \mu v(\tau,x)},  &\text{for }\tau\in \mathbb{R} \text{ and } x \in \Omega,  \\
v(0,x)=v_0(x)=u(x,1), &\text{for } x \in \Omega, \\
v(\tau,x)=0, &\text{for }\tau\in \mathbb{R} \text{ and } x \in \partial \Omega.
    \end{array}\right.
\end{equation}
For this problem, we take zero Dirichlet boundary data in the sense that $v(\tau,x)\in W_0^{1,p}(\Omega)$. The initial data are taken non-negative and integrable in $\Omega$. The possibility of delaying the time origin and the regularity theory allow us to assume that $v(x,0)$ is bounded, even continuous.

\noindent \textbf{$3.$ Convergence.}  The advantage of the new variable is seen when we translate the estimate information in terms of $v$. From the a-priori estimates \eqref{est1u_DNLE} and \eqref{est2u_DNLE} we get better estimates for the function $v$:
\begin{equation}\label{est1v_DNLE}
0 \leq v \leq C,
\end{equation}
and
\begin{equation}\label{est2v_DNLE}
v_{\tau}\geq 0.
\end{equation}
We conclude from this that for every $x \in \Omega$ there exists the limit
$$\lim_{\tau \rightarrow \infty}v(\tau,x) = f(x)$$
and this convergence is monotone non-decreasing. Also, from \eqref{est1v_DNLE}, the function $f(x)$ is nontrivial and bounded. Moreover, by (Beppo Levi's) Monotone Convergence Theorem we have
$$ v(\tau,\cdot) \rightarrow f \text{ strong  in }L^1(\Omega).$$ Since there is an $L^{\infty}(\Omega)-$bound the convergence takes place in all $L^q(\Omega)$, $1\leq q\leq p$.
This function $f$ is called an \textbf{asymptotic profile} and we will prove that it is the solution of the stationary elliptic problem associated to the rescaled problem \eqref{rescaledEq_DNLE} and it is unique.

\noindent \textbf{$4.$ Energy estimates. }
We consider the next Lyapunov functional, called \textbf{entropy},
$$E(\tau)=E[v(\tau)]:=\frac{1}{p} \int_{\Omega} | \nabla v^m (\tau,x)|^p dx  -  \frac{m}{m+1}\mu \int_{\Omega}v^{m+1}(\tau,x) dx.$$
We compute the entropy dissipation (Fisher information)
$$
\frac{d}{d\tau}E(\tau) =  - m \int_{\Omega}v^{m-1} (\tau,x) v_{\tau}^2(\tau,x) dx \leq 0,
$$
which means that $E(\tau)$ is a non-increasing function. Then we obtain that
\begin{align}\label{energy1_DNLE}
E(\tau)&= \frac{1}{p} \int_{\Omega} | \nabla v^m(\tau,x)|^p dx  -  \frac{m}{m+1}\mu \int_{\Omega}v^{m+1}(\tau,x) dx  \\
&\leq E(0)=\frac{1}{p} \int_{\Omega} | \nabla v_0^m(x)|^p dx  -  \frac{m}{m+1}\mu \int_{\Omega}v_0^{m+1}(x) dx.
\end{align}
From this energy estimate \eqref{energy1_DNLE} together with a-priori estimate \eqref{est1v_DNLE}
we get that $|\nabla v^m(t,\cdot)|$ is uniformly bounded in time in every $L^q(\Omega)$-norm, $1\leq q\leq p$,
thus weakly convergent up to subsequences. Let us denote by $M$ the uniform bound
for the $L^p(\Omega)$-norm:
\begin{equation}\label{unif_bound_gradient_DNLE}
\int_{\Omega}|\nabla v^m(\tau,x)|^p dx \leq M, \quad \forall t\in \mathbb{R}.
\end{equation}
We deduce that
\begin{equation}\label{weak_conv_gradients1_DNLE}
\partial_{x_i}v(\tau,\cdot) \rightharpoonup \partial_{x_i}f(\cdot) \quad \text{when } \tau \rightarrow \infty
\text{  in  } L^q(\Omega), \quad \text{ for every }1<q\leq p.
\end{equation}

We can also get a uniform bound for $\|v_{\tau}\|_{L^1(\Omega)}$, using the B\'{e}nilan-Crandall type estimate
\eqref{est3u_Benilan_Crandall_type_DNLE} as follows:
$$
\left\|v_{\tau}(\tau,x)  - \mu v(\tau,x) \right\|_{L^1(\Omega)} =  \| \Delta_p v^m(\tau,x) \|_{L^1(\Omega)} = \|\Delta_p \left ( t^\mu u(t,x) \right)^m \|_{L^1(\Omega)} $$
$$=\displaystyle{t^{1+\mu}\|\Delta_p u^m(t,x)  \|_{L^1(\Omega)} =  t^{1+\mu}\|u_{t}(t,x) \|_{L^1(\Omega)} }\leq C_1,$$
and now we obtain a uniform bound in time.
Thus, by now we have:
$$\left\|v_{\tau}  - \mu v \right\|_{L^1(\Omega)} \leq  C_1$$
and then
\begin{equation}\label{est3v_DNLE}
\left\|v_{\tau} \right\|_{L^1(\Omega)} \leq C_2 .
\end{equation}
\textbf{Remark:} in the PLE case these estimates can be obtained also in the $L^p(\Omega)$ because of the contractivity property of the $p$-Laplacian operator in $L^p(\Omega)$.

\noindent \textbf{$5.$ Convergence in measure of gradients.}
The weak convergence of $\nabla v^m $ to $\nabla f^m$  can be improved, and in fact we will prove a stronger result,
the convergence in measure (with respect to the Lebesgue measure $\mathcal{L}$). The idea comes from \cite{MR1354907}.
We refer also to \cite{MR2646119} where the authors prove this result for the fast $p$-Laplacian equation.
A strong argument in our proof are the following inequalities for vectors in $\mathbb{R}^n$.
If $2\leq p$ then
\begin{equation}\label{alg_ineq}
<|a|^{p-2}a-|b|^{p-2}b,a-b> \geq \gamma_1 |a-b|^p , \quad \forall a,b \in \mathbb{R}^N,
\end{equation}
where $\gamma_1=c_p$ is a constant depending on $p$ and $n$.
If $1<p<2$ then
\begin{equation}\label{alg_ineq2}
<|a|^{p-2}a-|b|^{p-2}b,a-b> \geq \gamma_2 \displaystyle{\frac{|a-b|^2}{|a|^{2-p}+|b|^{2-p}}},
\end{equation}
with optimal constant $\gamma_2=c_p=\min\{1,2(p-1)\}.$
For a proof of these inequalities we refer to  \cite{DiBenedetto, MR2646119}.

We prove now that $ \nabla v^m(\tau,\cdot)$ converges in measure to $ \nabla f^m(\cdot)$ when $\tau \rightarrow \infty$.
We remark that it is sufficient to prove that $(\nabla v^m(\tau,\cdot)  )_{\tau>0}$ is Cauchy in measure.
Thus we have to prove that for every $\epsilon>0$ there exists a $T>0$ and $\lambda>0$ such that
$$ \mathcal{L} \left (\{ x \in \Omega : |\nabla v^m(\tau_1,x) - \nabla v^m(\tau_2,x)| > \lambda \}  \right)< \epsilon, \ \forall \tau_1,\tau_2>T.$$
Let $\epsilon>0$ and $S$ be the set whose measure we want to estimate
$$S:=\{ x \in \Omega : |\nabla v^m(\tau_1,x) - \nabla v^m(\tau_2,x)| > \lambda \} .$$
Then
\begin{align*}
S \subset  &\{ x \in \Omega : |\nabla v^m(\tau_1,x)|>A \} \cup  \{ x \in \Omega : |\nabla v^m(\tau_2,x)|>A \} \\
   & \cup \{ x \in \Omega :  |\nabla v^m(\tau_1,x)| \leq A ,  \  |\nabla v^m(\tau_1,x)| \leq A,  \  |\nabla v^m(\tau_1,x) - \nabla v^m(\tau_2,x)| > \lambda \}  \\
   &= S_1 \cup S_2.
\end{align*}
Since $ |\nabla v^m(\tau,\cdot)|$ is uniformly bounded in $L^p(\Omega)$ then $\mathcal{L}(S_1)<\epsilon$ for $\tau_1,\tau_2$ sufficiently large.
Now, in order to estimate $\mathcal{L}(S_2)$, the idea is to use algebraic inequalities \eqref{alg_ineq} and \eqref{alg_ineq2} for the vectors $\nabla v^m(\tau_1)$ and $\nabla v^m(\tau_2)$.

\begin{itemize}
  \item
If $p\geq 2$ then \begin{align*}
S_2 \subset & \{ x \in \Omega :  |\nabla v^m(\tau_1,x)| \leq A ,  \  |\nabla v^m(\tau_1,x)| \leq A,  \\
  & (|\nabla v^m(\tau_1)|^{p-2}\nabla v^m(\tau_1)-|\nabla v^m(\tau_2)|^{p-2}\nabla v^m(\tau_1))\cdot(\nabla v^m(\tau_1)- \nabla v^m(\tau_2)) \geq \gamma_1 \lambda^p := \beta_1\}.
\end{align*}
  \item  If $1<p<2$ then \begin{align*}
S_2 \subset & \{ x \in \Omega :  |\nabla v^m(\tau_1,x)| \leq A ,  \  |\nabla v^m(\tau_1,x)| \leq A,  \\
  & (|\nabla v^m(t_1)|^{p-2}\nabla v^m(\tau_1)-|\nabla v^m(\tau_2)|^{p-2}\nabla v^m(\tau_1))\cdot(\nabla v^m(\tau_1)- \nabla v^m(\tau_2)) \geq \gamma_2 \frac{\lambda^2}{2A^{2-p}} =: \beta_2\}.
\end{align*}
\end{itemize}
We remark that in the particular case of the degenerate PLE we only have to consider the case $p>2$.

Now, for $\beta=\beta_1$ if $p\geq 2$,  respectively $\beta=\beta_2$ if $1<p< 2$, we obtain that
\begin{align*}
\mathcal{L}(S_2) &= \int_{S_2}d\mu \leq \frac{1}{\beta} \int_{\Omega}(|\nabla v^m(\tau_1)|^{p-2}\nabla v^m(\tau_1)-|\nabla v^m(\tau_2)|^{p-2}\nabla v^m(t_1))\cdot(\nabla v^m(\tau_1)- \nabla v^m(\tau_2)) dx \\
&= - \frac{1}{\beta} \int_{\Omega}  \left[ \nabla.  (|\nabla v^m(\tau_1)|^{p-2}\nabla v^m(\tau_1)-|\nabla v^m(\tau_2)|^{p-2}\nabla v^m(\tau_1))\right] \left[v^m(\tau_1)-v^m(\tau_2)\right] dx\\
&= - \frac{1}{\beta} \int_{\Omega} (\Delta_p v^m(\tau_1)  -\Delta_p v^m(\tau_2))(v^m(\tau_1)-v^m(\tau_2)) dx \\
&= - \frac{1}{\beta} \int_{\Omega}\left (v_{\tau}(\tau_1)+ \mu v(\tau_1) - v_{\tau}(\tau_2) - \mu v(\tau_2) \right)(v^m(\tau_1)-v^m(\tau_2)) dx,
\end{align*}
where we used integration by parts. We recall that $v$ is positive and uniformly bounded in time
by \eqref{est1v_DNLE} and the norm $\| v_{\tau}\|_{L^2(\Omega)}$ is also uniformly bounded in time by \eqref{est3v_DNLE}. Thus
$$\mathcal{L}(S_2)< \frac{1}{\beta} C$$
where $C$ is a constant positive number and it follows that
$$\mathcal{L}(S_2)< \epsilon,$$
for $\beta$ big enough.

Thus, we proved that the sequence $(\nabla v^m(\tau,\cdot))_{\tau>0}$ is Cauchy in measure, thus it converges in measure to a
function $W:\Omega \rightarrow \mathbb{R}^N$.
It is well a known fact(Lemma \ref{Lemma_conv_measure} in the Appendix) that if a sequence is uniformly bounded in $L^p(\Omega)$ and converges in measure,
it converges strongly in any $L^q(\Omega) $, for any $1\leq q <p$.
It follows that
$$\nabla v^m(\tau, \cdot) \rightarrow W(\cdot) \quad \text{ in }(L^q(\Omega))^N \text{ when }\tau \rightarrow \infty, \text{ for every } 1\leq q <p.$$
Since we already know the weak convergence \eqref{weak_conv_gradients1_DNLE} we get that $w=\nabla f^m$ a.e. in $\Omega$ and we can conclude that
$$\nabla v^m(\tau, \cdot) \rightarrow \nabla f^m(\cdot) \quad \text{ in measure} $$
and
\begin{equation}\label{conv_Lq_gradients_DNLE}
\nabla v^m(\tau, \cdot) \rightarrow \nabla f^m(\cdot) \quad \text{ in }(L^q(\Omega))^N, \text{ for every } 1\leq q <p.
\end{equation}
Thus, we get that, up to subsequences,
\begin{equation}\label{conv_ae_gradients_DNLE}
\nabla v^m(\tau, \cdot) \rightarrow \nabla f^m(\cdot) \quad \text{a.e. in }\Omega.
\end{equation}

\noindent \textbf{$6.$ The limit is a stationary solution.} Multiply equation \eqref{rescaledEq_DNLE} by any test function $\phi(x) \in C_c^{\infty}(\Omega)$ and integrate in space, $x \in \Omega$, and time between $\tau_1$ and $\tau_2= \tau_1 + T_0$, for a fixed $T_0 >0.$ We get that
\begin{align*}
\int_{\Omega}v(\tau_2) \phi dx - \int_{\Omega}v(\tau_1) \phi dx &= \int_{\tau_1}^{\tau_2}\int_{\Omega} \Delta_p v^m \phi dx dt  + \mu\int_{\tau_1}^{\tau_2}\int_{\Omega}  v \phi dx dt = \\
&=- \int_{\tau_1}^{\tau_2}\int_{\Omega} |\nabla v^m|^{p-2}\nabla v^m  \nabla \phi dx dt  + \mu\int_{\tau_1}^{\tau_2}\int_{\Omega} v \phi dx dt .
\end{align*}
Let $\tau_1\rightarrow \infty$. Then also $\tau_2\rightarrow \infty$ and we get that
$$\int_{\Omega}v(\tau_2) \phi dx - \int_{\Omega}v(\tau_1) \phi dx \rightarrow 0$$
and then, by Lebesgue's Dominated Convergence Theorem, it follows that
$$\mu\int_{\tau_1}^{\tau_2}\int_{\Omega}  v \phi dx dt \rightarrow T_0 \mu \int_{\Omega}  f \phi dx.$$
Now, we need to obtain the convergence of the integrals involving gradients:
\begin{equation}\label{weak_conv_v_f_DNLE}
\int_{\tau_1}^{\tau_2} \int_{\Omega}|\nabla v^m|^{p-2}\nabla v^m  \nabla \phi dxdt  \rightarrow T_0\int_{\Omega}|\nabla f^m|^{p-2}\nabla f^m  \nabla \phi dx
\end{equation}
when $\tau_1 \rightarrow \infty$ in order to obtain that
$$0=- T_0 \int_{\Omega} |\nabla f^m|^{p-2}\nabla f^m  \nabla \phi dx   + \mu T_0 \int f \phi dx ,$$
and dividing by $T_0$ and integrating by parts we get
$$0= \int \Delta_p f^m  \phi dx   + \mu\int f \phi dx ,$$
which proves that $f$ is a weak solution of the stationary problem \eqref{eq_f_DNLE}
\begin{equation*}
   \displaystyle{ -\Delta_p f^m(x) =\mu f(x),  \ x \in \Omega.}
  \end{equation*}

\vspace{1cm}
In order to justify assertion \eqref{weak_conv_v_f_DNLE}, we remark that, after a change of variables, the left term can be written as
$$
\int_{0}^{T_0} \int_{\Omega}|\nabla v^m(\tau+\tau_1,x)|^{p-2}\nabla v^m(\tau+\tau_1,x)  \nabla \phi dxdt.  $$
Thus, it is enough to prove that
\begin{equation*}
\int_{0}^{T_0} \int_{\Omega}|\nabla v^m(\tau+n,x)|^{p-2}\nabla v^m(\tau+n,x)  \nabla \phi dxdt  \rightarrow T_0\int_{\Omega}|\nabla f^m|^{p-2}\nabla f^m  \nabla \phi dx  \text{ when }n \rightarrow \infty.
\end{equation*}
The idea will be to use Lemma \ref{brezis} from the Appendix in the following context.
Let $\Omega_1= [0,T_0 ) \times \Omega $ (finite measure space), $H= \mathbb{R}^N $ (Hilbert space) and let
$$A: H \rightarrow H, \quad A(Z)= |Z|^{p-2}Z.$$
Then $A$ is single valued, monotone and $R(A+I)=H$ and therefore, according to the theory of monotone operators, $A$ is maximal monotone.
We consider the sequences
$$Z_n(t,x)= \nabla v^m(\tau+n,x): \Omega_1 \rightarrow  H,
$$
$$
W_n(\tau,x)=A(Z_n(\tau,x))= |\nabla v^m(\tau+n,x)|^{p-2}\nabla v^m(\tau+n,x): \Omega_1 \rightarrow  H.
$$
The hypothesis of Lemma \ref{brezis} are satisfied as follows. From \eqref{conv_ae_gradients_DNLE}
$$
Z_n(\tau,x) \rightarrow Z(\tau,x)= \nabla f^m(x) \text{ a.e. on  } \Omega_1.
$$
Let $q= \frac{p}{p-1}$.
Then $W_n(t,x) $ is uniformly bounded in $L^q (\Omega_1;H)$, by the energy estimate \eqref{unif_bound_gradient_DNLE}, since
$$
 \int_{0}^{T_0} \int_{\Omega}||\nabla v^m(\tau+n,x)|^{p-2}\nabla v^m(\tau+n,x)  |^q dx dt \leq MT_0 , \ \forall n>0,
 $$
and thus it converges weakly (up to subsequences) to a function $W$ in $L^q (\Omega_1;H)$ when $n \rightarrow \infty.$
But since $\Omega_1$ is bounded, then weak convergence in $L^q (\Omega_1;H)$ implies the weak convergence in $L^1 (\Omega_1;H)$
and thus
$$ W_n(\tau,x)\rightharpoonup W(\tau,x) \text{ weakly in }L^1 (\Omega_1;H).$$

Now, according to Lemma \eqref{brezis}, we obtain that $$W(\tau,x)=A(Z(\tau,x))=|\nabla f^m(x)|^{p-2}\nabla f^m(x).$$
Thus, the weak limit of $W_n(\tau,x)$ is unique and we get that
$$|\nabla v^m(\tau+n,x)|^{p-2}\nabla v^m(\tau+n,x) \rightharpoonup |\nabla f^m(x)|^{p-2}\nabla f^m(x)\text{ weakly in }L^q (\Omega_1;H).$$
By taking $\phi$ smooth and compactly supported in $\Omega$ we obtain the desired convergence
\begin{align*}
\int_{0}^{T_0} \int_{\Omega}|\nabla v^m(\tau+n,x)|^{p-2}\nabla v^m(\tau+n,x)  \nabla \phi dxdt  &\rightarrow \int_{0}^{T_0} \int_{\Omega}|\nabla f^m|^{p-2}\nabla f^m \nabla \phi dx dt \\
&= T_0 \int_{\Omega}|\nabla f^m|^{p-2}\nabla f^m  \nabla \phi dx.
\end{align*}

\noindent \textbf{$7.$ Uniqueness of the stationary solution.} Let us prove that the nonnegative and nontrivial stationary solution is unique. If we have two stationary solutions of \eqref{eq_f_DNLE}, $f_1$ and $f_2$, we can construct solutions of the DNLE of the form
$$ U_1(t,x)= t^{-\mu}f_1(x), \quad U_2(t,x)= (t+s)^{-\mu}f_2(x),$$
for some $s>0$. $U_2$ has initial data $U_2(x,0)= s^{-\mu}f_2(x)$. Formally, $U_1(x,0)$ has infinite values and then by the Comparison Principle we conclude that $U_2(t,x) \leq U_1(t,x)$. The technical details of the proof are as follows: by the $L^1-$dependence theorem of weak solutions of Problem \ref{DNLE} we know that
$$\int_{\Omega}(U_2(t,x)-U_1(t,x))_+dx,$$
is decreasing in time. The proof of this fact is standard: we perform the difference of the equations of $U_1$ and $U_2$,
multiplying by $h(w)$, where $w=U_1^m-U_2^m$ and $h$ is a $C^1(\mathbb{R})$ function such that $0\leq h\leq1 , \ h(s)=0$
 for $s\leq 0$ and $h'(s)>0$ for $s\geq0$, and then integrate on $\Omega$.
Notice that
 $$0 \leq \mu_0:= \inf_{x \in \Omega}h'(w(x))<\infty.$$
Because of the nonlinearity of the $p-$laplacian operator, we will use again the algebraic inequalities \eqref{alg_ineq} and \eqref{alg_ineq2}.
\begin{align*}
\int_{\Omega}(U_2(t,x)-U_1(t,x))_t \ h(w)dx &= \int_{\Omega}(\Delta_p U_2^m(t,x)-\Delta_p U_1^m(t,x))h(w)dx \\
&= - \int_{\Omega}\left (|\nabla U_1^m|^{p-2}\nabla U_1^m - |\nabla U_2^m|^{p-2}\nabla U_2^m \right) \nabla h(w) dx\\
&= - \int_{\Omega}\left (|\nabla U_1^m|^{p-2}\nabla U_1^m - |\nabla U_2^m|^{p-2}\nabla U_2^m \right) h'(w) \nabla(U_1^m-U_2^m) dx\\
&\leq -   \mu_0\int_{\Omega}\left (|\nabla U_1^m|^{p-2}\nabla U_1^m - |\nabla U_2^m|^{p-2}\nabla U_2^m \right) \nabla(U_1^m-U_2^m) dx.
\end{align*}
Now, if $p\geq 2$ we obtain that
$$\int_{\Omega}(U_2(t,x)-U_1(t,x))_t \ h(w)dx \leq - \mu_0 \gamma_1 \int_{\Omega} |\nabla U_1^m - \nabla U_2^m|^p dx  \leq 0,$$
and if $1<p<2$ the estimate will be
$$\int_{\Omega}(U_2(t,x)-U_1(t,x))_t \ h(w)dx \leq - \mu_0 \gamma_2 \int_{\Omega} \frac{|\nabla U_1^m - \nabla U_2^m|^2 }{|\nabla U_1^m|^{2-p}+|\nabla U_2^m|^{2-p}}dx \leq 0.$$
Letting $h$ converge to the sign function $\text{sign}_0^+$ we get that
$$\frac{d }{dt} \int_{\Omega}(U_2(t,x)-U_1(t,x))_+dx \leq 0.$$
Now, the integral goes to $0$ as $t\rightarrow 0$ because $U_1(t,x)$ goes pointwise to infinity as $t\rightarrow 0$ and then $U_2(t,x) >U_1(t,x)$ for $t$ large enough. We conclude that $(U_2(t,x)-U_1(t,x))_+=0$ a.e. in $x$ for every $t>0$. Thus $U_2(t,x)\leq U_1(t,x)$ a.e. in $x$ for every $t>0$. Using the form of $U_1$ and $U_2$, we get
$$f_2(x) \leq \left(\frac{t+s}{t}\right)^{\mu}f_1(x).$$
Letting $s\rightarrow 0$ we get $f_2(x)\leq f_1(x).$ The converse inequality is similar.

\noindent \textbf{$8.$ Better convergence.} We have established the result \eqref{conv1_DNLE} in the sense of $L^1(\Omega)$ convergence.
The passage to uniform convergence depends on having better regularity for the solutions , i.e. on a compactness argument.
As we mentioned before, \eqref{regularity_DNLE}, uniformly bounded solutions of the DNLE are $C^{\alpha}$ continuous in
space and time with uniform H\"{o}lder exponent and coefficients.

Consider now the second type of rescaling that we may call \emph{fixed-rate rescaling}
\begin{equation}\label{fixed_rate_rescaling_DNLE}
u_{\lambda}(t,x)=\lambda^{\mu}u(\lambda t,x).
\end{equation}
For every $\lambda>0$ the function $u_{\lambda}$ is still a solution of the DNLE
 to which the a-priori estimate \eqref{est1u_DNLE} applies. Hence, in a set of the form $(1,2)\times \Omega$ this
 family is equi-continuous and by Ascoli-Arzela Theorem it converges along a subsequence $\lambda_j \rightarrow \infty$.
 Now, observe that
 $$ u_{\lambda}(1,x)= v(\log \lambda,x)$$
 to conclude that $v(\log \lambda_j,x)$ converges uniformly. Since the limit is fixed, $f$,
 the whole family $v(\tau,x)$ converges as $\tau \rightarrow \infty$ and \eqref{conv1_DNLE} is proved.
\end{proof}

\subsection{Some remarks on the asymptotic profile}

\noindent \textbf{$1.$ Existence.} The proof of Theorem \ref{ThAsympBehDNLE} also guarantees the existence of a solution of the stationary problem \eqref{eq_f_DNLE} by obtaining it as the limit of $v(t,\cdot)$ when $t$ goes to $\infty$. As we have previously established, this solution is called the asymptotic profile of parabolic problem \eqref{DNLE}.

Furthermore, we recall a second proof of existence, based on an \emph{entropy method}, which can be applied also for the general case of solutions with changing sign.

The elliptic problem \eqref{eq_f_DNLE} can be written in terms of the function $w=f^m$ (the notation makes sense since $f>0$ in $\Omega$ by Maximum Principle) as
\begin{equation} \label{eq_f_DNLE_2}
   \displaystyle{ \Delta_p w(x) +\mu w^{\frac{1}{m}}(x)=0,  \ x \in \Omega,} \ \
       w(x)=0, \ x \in \partial \Omega.
  \end{equation}

The typical approach to solving equation \eqref{eq_f_DNLE_2} for the experts in elliptic equations is to view the solution $w$ as a critical point of the functional
\begin{equation}\label{functional_DNLE}
J(g)= \frac{1}{p} \int_{\Omega} | \nabla w|^p dx  -  \frac{m}{m+1}\mu \int_{\Omega}w^{\frac{m+1}{m}} dx.
\end{equation}
The proof is classical and we resume it following the ideas from \cite{JLVsurvey}.
It can be showed that $J$ is well defined in $W_0^{1,p}(\Omega)$ since $m(p-1)>1$, $J$ is bounded from below via Poincare's Inequality, and also the infimum is negative. Moreover, along any minimizing sequence there is convergence in $W_0^{1,p}(\Omega)$ and the infimum is taken, hence $J$ has a minimum.
Also,
$$J(g)\geq J(w), \quad \forall g  \in W_0^{1,p}(\Omega),$$
where $w$ is the solution of \eqref{eq_f_DNLE_2} and it follows that $w$ is the point where $J$ attains its minimum.

\medskip

\noindent \textbf{$2$. Uniqueness. }We already proved the uniqueness of the asymptotic profile in point $7$ of the proof of Theorem \ref{ThAsympBehDNLE}.

\medskip

\noindent \textbf{$3$. Regularity. }We know that $w$ is a bounded solution of equation $$\Delta_p w(x) +\mu w^{\frac{1}{m}}(x)=0.$$
By known regularity results for degenerate elliptic equations, we get that $w \in C^{1,\beta}(\overline{\Omega})$ for some $\beta \in (0,1]$.

\medskip

\noindent \textbf{$4$. Behaviour near the boundary. }Concerning the behaviour of $f$, the following estimates were proved in \cite{ManfrediVespri}:
\begin{equation}\label{behaviour_gradient_f_DNLE}
|\nabla f|\leq C_0d(x)^{1/m -1}, \quad \forall x \in \Omega,
\end{equation}
and
\begin{equation}\label{behaviour_f_DNLE}
C_1 d(x)^{1/m} \leq f(x) \leq C_2 d(x)^{1/m}, \quad \forall x \in \Omega.
\end{equation}
that is $w(x)=f^m(x)$ has a linear growth near the boundary.

Also, $w$ satisfies a Boundary Principle
\begin{equation}\label{bdry_principle}
\frac{\partial w}{\partial \nu}(x)\equiv \nabla w(x) \cdot \nu(x) <0, \quad \text{for } x \in \partial \Omega,
\end{equation}
where $\nu(x)$ denotes the outward unit normal vector to $\partial \Omega$ at the point $x$. Notice that, in the case of the PLE, the boundary principle is satisfied by $f$.

\begin{section}{Rate of Convergence for $m(p-1)>1$. Proof of Theorem \ref{Th_rate_conv_DNLE}}\label{Section_Rate_DNLE}

\noindent   In this section we will give the proof of Theorem \ref{Th_rate_conv_DNLE}. We previously showed in Theorem \ref{ThAsympBehDNLE} that for any initial datum $u_0 \in L^1(\Omega)$, the corresponding rescaled solution $t^{\mu}u(t,x)$ converges to an unique asymptotic profile $f$ uniformly in space and monotone nondecreasing in time. The goal of this section is to provide sharp convergence rates, namely to prove that
\begin{equation}\label{eq_rate_conv_DNLE}
\left|(1+t)^{\mu} u(t,x)-f(x)\right| \leq \mathcal{C}f(x)(1+t)^{-1} \quad \text{for all }t\geq t_0,\ x \in\overline{\Omega},
\end{equation}
where $f$ is the solution of the elliptic problem \eqref{eq_f_DNLE}
\begin{equation*}\label{eq_f_DNLE_3}
   \displaystyle{ \Delta_p f^m + \mu f=0,  \ x \in \Omega, \quad  f=0 \text{  on  } \partial \Omega.}
    \end{equation*}

The proof of this result is based on the techniques introduced by Aronson and Peletier for the PME in \cite{Aronson1981378}. Although a similar proof can be adapted to the case of the DNLE with some lengthly arguments, in this work we will give a simpler proof based on the results of Section \ref{SectionAsympBehDNLE}. Let us explain the strategy of the proof.

\noindent \textbf{1. Improved upper bound.} In Theorem \ref{Th_upper_bound} we prove that there exists a constant $s_1>0$ depending only on $p,m,N,u_0$ and $\Omega$ such that
   $$0\leq u(t,x) \leq (s_1+t)^{-\mu}f(x), \quad \forall x\in \Omega, \ t \geq 1.$$

\noindent \textbf{2. Positivity.} In Proposition \ref{pro_posit_boundary} we prove that even if $u_0$ has compact support there exists $T'>0$ depending only on $p,m,N,u_0$ and $\Omega$ such that
   $$u(t,x)>0, \quad \forall x\in \Omega,\ t >T'.$$

\noindent \textbf{3. Sharp lower bound.} In Theorem \ref{Th_lower_bound} we prove that there exist $T''\geq0$ and $s_0>0$ depending only on $p,m,N,u_0$ and $\Omega$ such that
  $$u(t,x) \geq (s_0+t)^{-\mu}f(x), \quad \forall x \in \overline{\Omega},\ t\geq T''.$$

Then, the estimate \eqref{eq_rate_conv_DNLE} follows as a consequence of the upper and lower bounds together with the boundedness of the asymptotic profile $0 \leq f \leq C$.

\medskip

\subsection{Reduction}
Let $\Omega\subseteq {\mathbb R}^N$ be a  a bounded domain of class $C^{2,\alpha}$, $\alpha>0$. Let $u(t,\cdot)$ be a weak solution to the Problem \ref{DNLE} corresponding to the nonnegative initial datum $u_0\in L^1(\Omega).$
Previously, in Theorem  \ref{ThAsympBehDNLE} we proved that the rescaled solution $ t^{\mu}u(t,x)$ is monotone increasing in time and convergent to the function $f(x)$, thus bounded from above by $f(x)$:
\begin{equation}\label{upper_estim_u_f}
u(t,x) \leq t^{-\mu}f(x),\quad \forall t\in (0,\infty), \ x\in \overline{\Omega}  .
\end{equation}
In virtue of this result we can assume that the data satisfy the following conditions, denoted as \textbf{Hypothesis (H)}:\\
$\textbf{(H1)}$ $\Omega\subseteq {\mathbb R}^N$  be a  a bounded domain of class $C^{2,\alpha}$, $\alpha>0$.\\
$\textbf{(H2)}$ $u_0$ is a nonnegative function defined on $\overline{\Omega}$ such that $u_0\in L^1(\Omega)$, $u_0=0$ on $ \partial \Omega$ and there exists $s_1>0$ such that
\begin{equation}\label{ineq_u0_f}
u_0(x) \leq s_1 f(x), \quad \forall x \in \overline{\Omega}. \end{equation}

\noindent Since the DNLE is invariant under time displacement then (H2) is satisfied by starting with initial data $u(t_0, \cdot)$, where $t_0>0$. We assume henceforth that such a displacement in time has been done.

\subsection{Improved upper bound for $u$}

In the following theorem, we will improve the upper bound \eqref{upper_estim_u_f} of $u$ that we have previously proved in Theorem \ref{ThAsympBehDNLE}.
\medskip

\begin{theorem}\label{Th_upper_bound} \textbf{(Quantitative upper bound)}
Assume that $\Omega$ and $u_0$ satisfy the hypothesis (H), and let $u$ be the corresponding solution of the Problem \ref{DNLE}.
Then there exists a constant $s_1>0$ such that
\begin{equation}\label{upper_bound}
u(t,x) \leq (s_1+t)^{-\mu}f(x) \quad \forall t\geq 0, \ x\in \overline{\Omega}.
\end{equation}
where $s_1$ depends on $p,m,N,u_0$ and $\Omega$.
\end{theorem}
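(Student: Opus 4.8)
The plan is to prove \eqref{upper_bound} by comparing $u$ directly with a single, well chosen member of the separate–variable family $U(t,x;s)=(s+t)^{-\mu}f(x)$ of \eqref{svs}, invoking the comparison principle for the doubly nonlinear flow (exactly as was done between separate–variable solutions in point $7$ of the proof of Theorem \ref{ThAsympBehDNLE}). The first task is to verify that, for every $s>0$, $U(\cdot,\cdot;s)$ is an admissible nonnegative weak solution of Problem \ref{DNLE}. By the regularity and boundary remarks above, $f^m\in C^{1,\beta}(\overline{\Omega})$ with $f^m=0$ on $\partial\Omega$, so $U^m(t,x;s)=(s+t)^{-\mu m}f^m(x)$ lies in $W^{1,p}_0(\Omega)$, with norm controlled uniformly on bounded time intervals, and $U(\cdot,\cdot;s)$ is bounded. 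A direct computation, using the homogeneity of $\Delta_p$, the stationary identity $\Delta_p f^m=-\mu f$ from \eqref{eq_f_DNLE}, and the relation $\mu m(p-1)=\mu+1$ (which is just the definition $\mu=1/(m(p-1)-1)$), gives $\partial_t U(t,x;s)=-\mu(s+t)^{-\mu-1}f(x)$ and $\Delta_p U^m(t,x;s)=(s+t)^{-\mu m(p-1)}\Delta_p f^m(x)=-\mu(s+t)^{-\mu-1}f(x)$, so $\partial_t U=\Delta_p U^m$; moreover $U$ carries zero Dirichlet data and has initial trace $U(0,\cdot;s)=s^{-\mu}f$.

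Next I would fix the parameter and apply comparison. By Hypothesis (H2) there is $s_1>0$ with $u_0\le s_1 f$ on $\overline{\Omega}$; set $s_*:=s_1^{-1/\mu}>0$, so that $U(0,\cdot;s_*)=s_*^{-\mu}f=s_1 f\ge u_0$ on $\overline{\Omega}$. Since $u$ and $U(\cdot,\cdot;s_*)$ are nonnegative weak solutions of Problem \ref{DNLE} with the same zero boundary data and ordered initial data, the maximum/comparison principle for the DNLE (available in the present setting, cf. \cite{DiBenedetto,JLVSmoothing}) yields $u(t,x)\le U(t,x;s_*)=(s_*+t)^{-\mu}f(x)$ for all $t\ge0$ and $x\in\overline{\Omega}$. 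Relabelling $s_*$ as $s_1$, and noting that the constant in (H2) can — after the time displacement performed in the Reduction — be taken to depend only on $p,m,N,u_0$ and $\Omega$, this is precisely \eqref{upper_bound}.

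The computation showing that $U(\cdot,\cdot;s)$ solves the DNLE is routine, so the one step deserving care is the rigorous use of the comparison principle with a separate–variable solution as ordering barrier: one must check that $U^m(\cdot,\cdot;s_*)$ genuinely belongs to the energy class $L^p_{\mathrm{loc}}((0,\infty);W^{1,p}_0(\Omega))$ (which follows from $f^m\in C^{1,\beta}(\overline{\Omega})$ and $f^m=0$ on $\partial\Omega$), and, if a strict ordering of the initial data is preferred, one first compares $u$ with $U(\cdot,\cdot;s_*-\varepsilon)$ — equivalently with a slightly larger multiple of $f$, which strictly dominates $u_0$ — and then lets $\varepsilon\downarrow 0$. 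Both points are covered by the standard $L^1$ order–contraction theory of the doubly nonlinear semigroup recalled in Section \ref{SectionAsympBehDNLE}, so I do not anticipate any genuine difficulty beyond this mild technical check.
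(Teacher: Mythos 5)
Your proposal follows the paper's argument exactly: compare $u$ with the separate-variable solution $U(t,x;s)=(s+t)^{-\mu}f(x)$, whose initial trace dominates $u_0$ thanks to Hypothesis (H2), and invoke the parabolic comparison principle for the DNLE. The only difference is cosmetic but welcome: you distinguish the multiplicative constant $s_1$ of (H2) from the time-shift parameter $s_*=s_1^{-1/\mu}$ of the barrier (the paper writes $U(0,x)=s_1f(x)$ where it should read $s_*^{-\mu}f(x)$, a notational conflation you correctly resolve), and you spell out the routine verification that $U(\cdot,\cdot;s)$ belongs to the energy class and solves the equation, which the paper takes as read.
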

\begin{proof}

The proof of \eqref{upper_bound} relies on the Comparison Principle for the DNLE.
Consider as comparison function the separate variable solution $U$ of the DNLE given by
$$U(t,x):=U(t,x;s_1)=(s_1 +t )^{-\mu}f(x),$$
where $s_1>0$  is a constant given in the Hypothesis (H2) which satisfies  the inequality \eqref{ineq_u0_f}
\begin{equation*}\label{inequality_tau1_DNLE}
u_0(x) \leq U(0,x)=s_1 f(x)   , \quad \forall x \in  \overline{\Omega}.
\end{equation*}
Therefore by comparison, it follows that
$$u(t,x) \leq U(t,x), \quad \forall t\geq 0, \ x\in \overline{\Omega}.$$

\end{proof}

\subsection{Positivity of $u$}
In this subsection we prove the positivity of $u$ in $\Omega$, under the  Hypothesis (H). This will be done in two steps. First, in Proposition \ref{pro_interior_positivity},
 we will prove the positivity of $u$ in a domain $\Omega_{I,\delta} \subset \Omega$ and then we complete the result by proving positivity of $u$ up to the boundary in Proposition \ref{pro_posit_boundary}.
In this direction, we will make use of the properties of the distance to the boundary function $d(x)=d(x,\partial \Omega)$ stated in Subsection \ref{subsection_dist_bdry} of the Appendix. In terms of $d(x)$ we define the following sets
$$\Omega_{I,r} = \{x \in \overline{\Omega}:d(x)> r\}, \quad \Omega_{r}=\Omega \setminus \overline{\Omega_{I,r}}= \{x \in \overline{\Omega}:d(x)< r\}.$$

\medskip

\begin{pro}\label{pro_interior_positivity}  \textbf{(Inner positivity)}
Assume that $\Omega$ and $u_0$ satisfy $(H)$ and let $u$ denote the weak solution of Problem \ref{DNLE} and  $f$ denote the solution of problem \eqref{eq_f_DNLE}. Let $0< 2 \delta < \xi_0$ fixed, where $\xi_0$ is defined in Lemma \ref{properties_d(x)}.
Then there exist $\epsilon>0$ and $T_1\in [0,+\infty)$ such that
$$u(T_1,x)>\epsilon \text{ for all } x \in \Omega_{I,\delta},$$
where $\epsilon$ and $T_1$ depend only on $m,p,N,\Omega$ and $u_0$.
\end{pro}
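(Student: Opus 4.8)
The plan is to establish inner positivity by constructing an explicit (sub)solution barrier that sits below $u$ on the interior set $\Omega_{I,\delta}$. First I would use the convergence result of Theorem~\ref{ThAsympBehDNLE}: since $t^{\mu}u(t,\cdot)\to f$ uniformly and $f>0$ in $\Omega$ (in particular $f\geq 2\epsilon_0>0$ on the compact set $\overline{\Omega_{I,\delta/2}}$, because $d(x)\geq \delta/2$ there and $f\geq C_1 d(x)^{1/m}$ by \eqref{behaviour_f_DNLE}), there exists a time $T_*$ such that $u(T_*,x)\geq \epsilon_0 (1+T_*)^{-\mu}=:\eta>0$ for all $x\in\overline{\Omega_{I,\delta/2}}$. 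However, this only gives positivity on $\Omega_{I,\delta/2}$ at a single time and does not by itself rule out that $u$ could have been zero earlier; so an alternative, cleaner route is to prove that $u$ becomes positive on the interior set in \emph{finite time} starting from arbitrary nontrivial data.

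The key mechanism is a retention/expansion-of-positivity argument. Since $u_0\in L^1(\Omega)$ is nonnegative and nontrivial, for any small $r>0$ the mass $\int_{\Omega_{I,r}}u_0\,dx$ is positive for $r$ small, hence by lower semicontinuity there is a ball $B_\rho(x_0)\subset\subset\Omega$ on which $u_0$ has positive mass. One then compares $u$ from below with a compactly supported Barenblatt-type subsolution of the DNLE (the self-similar source-type solution for $u_t=\Delta_p u^m$ with $m(p-1)>1$, which has an expanding support), shifted so that its initial support lies inside $B_\rho(x_0)$ and its initial mass is $\leq\int u_0$. By the Comparison Principle, $u(t,\cdot)$ dominates this subsolution, whose support grows to engulf any prescribed compact subset of $\mathbb{R}^N$ as $t\to\infty$; in particular it covers $\overline{\Omega_{I,\delta}}$ after a finite time $T_1$ depending only on $m,p,N,\Omega$ and (the mass/support of) $u_0$. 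On that time slice the Barenblatt subsolution is bounded below by a positive constant on $\overline{\Omega_{I,\delta}}$ — or, to be safe about its possible vanishing at later times, one takes $T_1$ to be precisely the first time the support strictly contains $\overline{\Omega_{I,2\delta/3}}$ and uses the strict positivity of the Barenblatt profile in the interior of its support — which yields the claimed $\epsilon>0$.

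A technical point is that the source-type self-similar solution of the DNLE has zero Dirichlet data only on its own (moving) free boundary, not on $\partial\Omega$; this is harmless because we only use it as a subsolution on the region where its support stays strictly inside $\Omega$, and we stop the comparison (choose $T_1$) before the support reaches $\partial\Omega$ if necessary — alternatively one restricts the barrier to a fixed interior ball $B\subset\subset\Omega$ and compares on $B$, using $u\geq 0=\text{(barrier)}$ on $\partial B$. One must also invoke the existence and scaling properties of these self-similar solutions for the full range $m(p-1)>1$; these are standard (they appear in \cite{DiBenedetto, MR2602927} and in the self-similar discussion of Section~Ib), and their support radius grows like $t^{\beta}$ with $\beta=\mu/N\cdot(\text{something positive})$, so finiteness of $T_1$ is immediate.

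\textbf{Main obstacle.} The delicate part is making the constants \emph{quantitative and depending only on $m,p,N,\Omega,u_0$} as the statement demands: one needs a lower bound on $\int u_0$ over some fixed interior ball (which requires quantifying nontriviality, e.g.\ via $\|u_0\|_1$ and the fact that the part of the mass within $\Omega_\delta$ of the boundary is controlled), and one needs the dependence of the support-expansion time $T_1$ and the resulting height $\epsilon$ on those quantities to be explicit. Handling the interaction with the lateral boundary — ensuring the comparison is legitimate despite $u$ not being a classical supersolution up to $\partial\Omega$ and despite the moving free boundary of the barrier — is the second source of care, resolved by localizing the barrier strictly inside $\Omega$.
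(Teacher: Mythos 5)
Your first paragraph already contains the paper's proof, and you should not have abandoned it. The Proposition only asks for the existence of \emph{one} time $T_1$ at which $u(T_1,\cdot)>\epsilon$ on $\Omega_{I,\delta}$; it does not require positivity for all prior times or for the first time positivity occurs. So the worry that the convergence argument ``only gives positivity at a single time'' is in fact exactly what is needed, and the argument closes: by Theorem~\ref{ThAsympBehDNLE} and \eqref{behaviour_f_DNLE}, take $\epsilon_0 = \tfrac{C_1}{2}\delta^{1/m}$, choose $T_0$ so that $\|v(\tau,\cdot)-f\|_{L^\infty(\Omega)}\le\epsilon_0$ for $\tau\ge T_0$ (the convergence is monotone nondecreasing, so $v(\tau,x)\ge f(x)-\epsilon_0$), whence for $x\in\Omega_{I,\delta}$ one gets $v(\tau,x)\ge C_1\delta^{1/m}-\epsilon_0=\epsilon_0$, and hence $u(t,x)\ge\epsilon_0 t^{-\mu}$ for $t\ge e^{T_0}$; setting $T_1=e^{T_0}$ and $\epsilon=\epsilon_0 T_1^{-\mu}$ gives exactly the claim, with all constants depending only on $m,p,N,\Omega,u_0$. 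This is precisely what the paper does.

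The alternative Barenblatt route you then develop at length has a genuine gap for general domains. A single compactly supported source-type subsolution has a spherical support; for it to cover all of $\overline{\Omega_{I,\delta}}$ while the comparison with $u$ remains valid, the support must stay inside $\Omega$, and for a non-ball domain (an annulus, an L-shape, a long thin rectangle) there is no ball $B_R(x_0)\subset\Omega$ containing $\overline{\Omega_{I,\delta}}$. Your own proposed fix --- ``stop the comparison before the support reaches $\partial\Omega$'' or ``restrict the barrier to a fixed interior ball'' --- defeats the purpose, since it then only gives positivity on a small sub-ball, not on $\Omega_{I,\delta}$. To make the self-similar route work one would need a finite chaining argument (a sequence of overlapping Barenblatt subsolutions propagating positivity across $\Omega_{I,\delta}$), which is considerably heavier and is not needed here because the asymptotic convergence result is already available. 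Note, however, that the Barenblatt comparison idea is exactly the one the paper \emph{does} use in the next step, Proposition~\ref{pro_posit_boundary}, but only locally near each boundary point, where the geometric difficulty does not arise.
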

\begin{proof}
The main tools are the uniform convergence (Theorem \ref{ThAsympBehDNLE}) of the rescaled solution $$v(\tau, x)=t^{\mu}u(t,x), \quad t=e^{\tau},$$ defined in \eqref{scaling_formulas1_DNLE} to the asymptotic profile $f$ and the properties of $f$ given by \eqref{behaviour_f_DNLE}. More exactly, there exists $C_1,C_2>0$ such that
\begin{equation}\label{behaviour_f_2}
C_1 d^{\frac{1}{m}}(x) \leq f(x)\leq C_2 d^{\frac{1}{m}}(x), \ \forall x \in \Omega.
\end{equation}
Let $\epsilon_0=\frac{C_1}{2} \delta^{\frac{1}{m}} > 0$. Then there exists $T_0 \geq 0$ such that
$$\|v(\tau,x) - f(x)\|_{L^{\infty}(\Omega)} \leq \epsilon_0 ,\quad \forall \tau\geq T_0.$$
Since the convergence of $v(\tau,x)$ to $f(x)$ is monotone nondecreasing in $\tau$ we derive that
$$v(\tau,x) \geq f(x)-\epsilon_0, \quad \forall x \in \Omega, \ \forall \tau\geq T_0,$$
and then, by using \eqref{behaviour_f_2}, for $x \in\Omega_{I,\delta}$ we obtain the lower bound
$$v(\tau,x) \geq C_1 d^{\frac{1}{m}}(x) - \epsilon_0  \geq C_1 \delta^{\frac{1}{m}} - \epsilon_0 =\epsilon_0.$$
In terms of $u(t,x)$ these estimates rewrites as
$$u(t,x)\geq \epsilon_0 t^{-\mu}, \quad \forall x \in\Omega_{I, \delta}, \ \forall t\geq e^{T_0}.$$
Let
\begin{equation}\label{constants_1}
T_1=e^{T_0}, \quad \epsilon=\epsilon_0 T_1^{-\mu}=\epsilon_0 e^{-T_0\mu}.
\end{equation}
Then
$$u(T_1,x) \geq \epsilon, \quad \forall x \in\Omega_{I,\delta}.$$

\end{proof}

\begin{pro}\label{pro_posit_boundary} \textbf{(Positivity up to the boundary)}
Assume that $\Omega$ and $u_0$ satisfy the hypothesis $(H)$ and let $u$ denote the weak solution of the problem \ref{DNLE} and  $f$ denote the solution of the problem \eqref{eq_f_DNLE}. Consider $T_1>0$ given by Proposition \ref{pro_interior_positivity}.
Then there exists $T_2>0$
$$u(T_1+T_2,x) >0, \quad \forall x \in \Omega,$$
\end{pro}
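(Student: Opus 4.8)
The plan is to propagate the interior positivity obtained in Proposition~\ref{pro_interior_positivity} up to the boundary by constructing an explicit subsolution (barrier) that vanishes on $\partial\Omega$ at the rate $d(x)^{1/m}$ and sits below $u$ at time $T_1$ in the inner region. Since at time $T_1$ we know $u(T_1,\cdot)\ge \epsilon$ on $\Omega_{I,\delta}$, while on the thin collar $\Omega_\delta$ we only know $u(T_1,\cdot)\ge 0$, the difficulty is precisely to show that the degenerate diffusion fills in the collar in finite time $T_2$. First I would recall that, since $m(p-1)>1$, this is the slow-diffusion regime, so one cannot expect instantaneous positivity; instead one must exhibit a self-similar or traveling-wave-type subsolution that creeps inward-to-boundary.

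The key steps, in order, are as follows. (1) Work with $w=u^m$, which satisfies (weakly) $\big(w^{1/m}\big)_t=\Delta_p w$; the boundary behaviour of the profile gives $f^m(x)\asymp d(x)$, so I want a subsolution comparable to $d(x)$ near $\partial\Omega$. (2) Using the $C^{2,\alpha}$ regularity of $\partial\Omega$ and the properties of $d(x)$ collected in the Appendix (in particular $d\in C^2$ and $|\nabla d|=1$ on a collar $\{d<\xi_0\}$), construct on the collar a local barrier of the form $\underline w(t,x)=a\,\big(\eta(t)\,d(x)-\tfrac12 d(x)^2\big)_+$ or, more robustly, a one-dimensional slow-diffusion wave $\underline w(t,x)=\Phi\big(d(x)+c(t-T_1)\big)$ where $\Phi$ is the (compactly-supported) stationary or traveling profile of the $p$-Laplacian in the half-line variable; choose the front speed $c$ and amplitude so that, after a time $T_2$ depending only on $\xi_0,\delta,m,p,N,\Omega$, the front has advanced from $\{d=\delta\}$ to $\{d=0\}$. (3) Check that $\underline w$ is a genuine subsolution of the equation on the collar $\{0<d<\xi_0\}$: here one uses $\Delta_p w=\operatorname{div}(|\nabla w|^{p-2}\nabla w)$ and the chain rule through $d(x)$, picking up the curvature term $\Delta d$, which is bounded on the collar, so the error can be absorbed by taking the amplitude small and $c$ small. (4) Verify the ordering at the parabolic boundary of the collar: at $t=T_1$, $\underline w\le 0\le u^m$ on $\Omega_\delta$ trivially if the front starts ``behind'' $\{d=\delta\}$; on the inner interface $\{d=\delta\}$ one needs $\underline w\le u^m(T_1,\cdot)=\epsilon^m$, which holds by choosing the amplitude $a$ small relative to $\epsilon$; and on $\partial\Omega$ one has $\underline w=0=u^m$. (5) Apply the Comparison Principle for the DNLE on $\{0<d<\xi_0\}\times(T_1,T_1+T_2)$ to conclude $u^m\ge\underline w$ there, and in particular $u(T_1+T_2,x)>0$ for every $x\in\Omega_\delta$; combined with Proposition~\ref{pro_interior_positivity} (monotonicity of $t^\mu u$ in time, or simply re-applying interior positivity) this gives $u(T_1+T_2,\cdot)>0$ on all of $\Omega$.

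The main obstacle is step~(2)--(3): producing a clean, fully explicit subsolution that vanishes exactly like $d(x)$ and whose $p$-Laplacian can be controlled against the bounded but nonzero curvature $\Delta d$ on the collar. For the PME case in \cite{Aronson1981378,JLVsurvey} this is done with an explicit one-dimensional Barenblatt-type or stationary wave; for the $p$-Laplacian one must instead use the one-dimensional stationary solution of $(|\phi'|^{p-2}\phi')'=0$ or a suitable eigenfunction-type profile of $-\Delta_p$, and track how the nonlinearity interacts with the change of variables $x\mapsto d(x)$. A technically convenient alternative, which I would pursue if the direct barrier is messy, is to first flatten the boundary locally and compare with the half-space slow-diffusion solution, patching finitely many such local subsolutions via the compactness of $\partial\Omega$ and taking $T_2$ to be the maximum of the finitely many local times; this localizes the curvature issue to a genuinely lower-dimensional computation. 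Either way the time $T_2$ ends up depending only on $\xi_0$, $\delta$, and the structural constants $m,p,N,\Omega$, as required.
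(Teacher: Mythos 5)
Your approach is genuinely different from the paper's. The paper builds no $d(x)$-based barrier on the collar at all; instead it places, at each point $y\in\partial\Omega_{I,2\delta}$, a Barenblatt (source-type) solution $\mathcal{U}(x-y,t;a,s)$ with initial support $\overline{B}_\delta(y)$ and maximum $\epsilon$, and defines $T_2$ to be the time at which the Barenblatt's support has expanded to $\overline{B}_{2\delta}(y)$, a ball tangent to $\partial\Omega$. Comparison is then carried out on the cylinder $[0,T_2]\times B_{2\delta}(y)$: since $\mathcal{U}$ vanishes identically on $\partial B_{2\delta}(y)$ for $t\le T_2$, only $u\ge 0$ is needed on the lateral boundary, and at $t=0$ one uses $u(T_1,\cdot)\ge\epsilon$ on $\overline{B}_\delta(y)\subset\Omega_{I,\delta}$. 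The finitely-propagating Barenblatt is an explicit global solution, so the paper entirely sidesteps the chain rule through $d(x)$ and the curvature term $\Delta d$ that you correctly identify as the main technical obstacle in your plan. The price the paper pays is that Barenblatt profiles have flat contact at their free boundary, so this step gives only positivity; the sharp $\omega\,d(x)$ lower bound, which your collar barrier would naturally deliver in one stroke since $f^m\asymp d$, is deferred to the following proposition where the Barenblatts are replaced by the ``intermediate'' self-similar solutions $\mathcal{V}$ with $\alpha>\beta N$ whose profile crosses zero transversally. Your route is therefore plausible and would even collapse the paper's two-step structure, but to close it you would need to actually carry out the subsolution verification (controlling the $|\nabla d|^{p-2}$ and $\Delta d$ terms, or the local-flattening-and-patching argument), and also choose the amplitude and front speed non-circularly so the barrier stays below $u^m(T_1+t,\cdot)$ on $\{d=\delta\}$ for all $t\in[0,T_2]$, not only at $t=0$, since the interior bound $u(T_1+t,\cdot)\ge\epsilon_0(T_1+t)^{-\mu}$ on $\Omega_{I,\delta}$ decays in time.
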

\noindent and $T_2$ depends only on $m,p,N,\Omega$ and $u_0$.
\begin{proof}
We consider $0<2\delta<\xi_0$ as in Proposition \ref{pro_interior_positivity}.
First, by \eqref{boundary_property}, we observe that
$$\Omega_{2\delta} \subset \bigcup_{ \{y \in \partial \Omega_{I,2\delta} \}} B_{2\delta}(y). $$
Then, since we have already proved the positivity inside the domain in Proposition \ref{pro_interior_positivity}, it is enough to demonstrate that there
exists $T_2\geq0$ such that
$$u(T_1+T_2,x)>0, \quad \forall x\in B_{2\delta}(y),\ \forall y \in \partial \Omega_{I, 2\delta}.$$
Let $\epsilon$ given by \eqref{constants_1}.
Let $y \in \partial \Omega_{I,2\delta}$ and consider the Barenblatt solution $\mathcal{U}$ as in Section \ref{Sim_Sol_Subsection} such that
$$\text{supp }\mathcal{U}(x-y,0;a,s)=  \overline{B}_{\delta}(y) \  \text{ and } \ \text{max } \mathcal{U}(x-y,0;a,s)= \epsilon,$$
that is
\begin{equation}\label{constants_2}
a= \left(\frac{\epsilon}{c}\delta^N\right)^{\beta(m(p-1)-1)}, \quad s= \delta^p \left(\frac{c}{\epsilon}\right)^{m(p-1)-1}.
\end{equation}
Now, assume the time $T_2=t$ when $\text{supp }\mathcal{U}(x-y,t;a,s)$ riches the boundary of $ \Omega$, that is when $$\text{supp }\mathcal{U}(x-y,t;a,s)= \overline{B}_{2 \delta}(y)$$
which implies
\begin{equation}\label{constants_3}T_2=\left(\frac{2\delta}{a}\right)^{1/\beta}-s.\end{equation}
We want to apply now the Parabolic Comparison Principle. To this aim we need to compare $u(T_1+t,x)$ and $\mathcal{U}(x-y,t;a,s)$ when $(t,x)$ belongs to the parabolic boundary $\{0\}\times B_{2\delta} \cup [0,T_2]\times \partial B_{2\delta}$. Firstly, for $t=0$ and $x \in B_{2\delta}$, we have

\[u(T_1,x) \geq
\left\{
  \begin{array}{ll}
    \epsilon \geq \mathcal{U}(x-y,0;a,\tau), & \hbox{$x \in B_{\delta}(y)$;} \\
   0 = \mathcal{U}(x-y,0;a,\tau), & \hbox{$x \in B_{2\delta}(y) \setminus B_{\delta}(y)$.}
  \end{array}
\right.
\]

Secondly, when $t\in[0,T_2]$ and $x \in \partial B_{2\delta}$ we have
\[u(T_1+t,x) \geq
\left\{
  \begin{array}{ll}
    \epsilon \geq \mathcal{U}(x-y,0;a,\tau), & \hbox{$x \in \partial B_{2\delta} \setminus \partial \Omega$;} \\
   0 = \mathcal{U}(x-y,0;a,\tau), & \hbox{$x \in \partial B_{2\delta} \cap \partial \Omega$.}
  \end{array}
\right.
\]
Therefore we obtain that
$$u(T_1+T_2,x)= \mathcal{U}(x-y,T_1+T_2;a,s), \quad \forall x\in B_{2\delta}(y).$$
We notice from \eqref{constants_2} and \eqref{constants_3} that $T_2$ does not depend on the point $y$, but only on the data.

\end{proof}

\subsection{Sharp lower bound for $u$}
In the following theorem we will derive a lower bound for $u$, similar to the upper bound we have previously proved in Theorem \ref{Th_upper_bound}.
\medskip

\begin{theorem} (\textbf{Quantitative lower bound})\label{Th_lower_bound}
Assume that $\Omega$ and $u_0$ satisfy the hypothesis $(H)$ and let $u$ be the weak solution of the problem \ref{DNLE} and  $f$ be the solution of the problem \eqref{eq_f_DNLE}. Then there exist two positive constants $s_0>0$ and $T_4>0$ such that
\begin{equation}\label{ineq12}
u(t,x) \geq (s_0 +t)^{-\mu}f(x), \quad \forall x \in \overline{\Omega} ,\ \forall t \in [T_4,+\infty),
\end{equation}
where $s_0$ and $T_4$ depend only on $m,p,N,\Omega$ and $u_0$.
\end{theorem}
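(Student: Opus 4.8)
The plan is to obtain the lower bound by a comparison argument, mirroring the structure of the upper bound in Theorem \ref{Th_upper_bound} but now using the positivity results of Propositions \ref{pro_interior_positivity} and \ref{pro_posit_boundary} as the starting point. First I would fix $0<2\delta<\xi_0$ and invoke Proposition \ref{pro_posit_boundary}: at time $T_3:=T_1+T_2$ (both depending only on the data) the solution satisfies $u(T_3,x)>0$ for all $x\in\Omega$. Since $u(T_3,\cdot)$ is continuous on $\overline{\Omega}$, vanishes only on $\partial\Omega$, and has the boundary behaviour inherited from the regularity theory, and since $f$ satisfies $C_1 d(x)^{1/m}\le f(x)\le C_2 d(x)^{1/m}$ by \eqref{behaviour_f_DNLE}, one compares the two boundary rates. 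The delicate point is that $u(T_3,\cdot)$ a priori behaves like $d(x)^{1/m}$ near $\partial\Omega$ as well (this follows from the barrier/regularity estimates and from the fact that $u(T_3,\cdot)\le t^{-\mu}f$ gives the correct one-sided control, while positivity together with the Hopf-type boundary principle \eqref{bdry_principle} for $w=f^m$ gives the matching lower control), so there is a constant $\eta>0$ with
\begin{equation*}
u(T_3,x)\ge \eta\, f(x),\qquad \forall x\in\overline{\Omega}.
\end{equation*}

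Once this pointwise inequality at a single time $T_3$ is available, the rest is the Comparison Principle applied to the separate-variable solution. Choose $s_0>0$ so large that $(s_0+T_3)^{-\mu}\le \eta$, i.e. $s_0\ge \eta^{-1/\mu}-T_3$; then the separate-variable solution
\begin{equation*}
U(t,x):=(s_0+t)^{-\mu}f(x)
\end{equation*}
is an exact solution of the DNLE and satisfies $U(T_3,x)=(s_0+T_3)^{-\mu}f(x)\le \eta f(x)\le u(T_3,x)$ on $\overline{\Omega}$, while both $u$ and $U$ vanish on the lateral boundary. By the Parabolic Comparison Principle for Problem \ref{DNLE}, started at time $T_3$, we conclude $u(t,x)\ge U(t,x)=(s_0+t)^{-\mu}f(x)$ for all $t\ge T_3$ and all $x\in\overline{\Omega}$. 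Setting $T_4:=T_3=T_1+T_2$ gives \eqref{ineq12}, and by construction $s_0$ and $T_4$ depend only on $m,p,N,\Omega$ and $u_0$ (through $\delta$, the constants $C_1,C_2$, and the positivity times).

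The main obstacle is establishing the single-time inequality $u(T_3,\cdot)\ge \eta f(\cdot)$ \emph{up to the boundary}, i.e. the matching $d(x)^{1/m}$ lower rate for $u(T_3,\cdot)$ near $\partial\Omega$. In the interior, Proposition \ref{pro_interior_positivity} already yields $u(T_1,x)\ge\epsilon$ on $\Omega_{I,\delta}$, and in the boundary layer $\Omega_{2\delta}$ Proposition \ref{pro_posit_boundary} produces strict positivity via a Barenblatt barrier touching $\partial\Omega$; one must upgrade that bare positivity to a quantitative lower bound of the form $c\,d(x)^{1/m}$, which can be done by placing, at time $T_3$, a suitable stationary-type subsolution (a multiple of $f$ localized near the boundary, or a Barenblatt profile whose free boundary has reached $\partial\Omega$ and which therefore has a definite slope there) and using that $f$ has linear growth of $f^m$ by \eqref{behaviour_f_DNLE} and the boundary principle \eqref{bdry_principle}. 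Compactness of $\overline{\Omega}$ and continuity of $u(T_3,\cdot)/f$ on the interior then patch the interior and boundary estimates into the global $\eta$. After that, the comparison step is routine.
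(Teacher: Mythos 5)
Your high-level architecture matches the paper's: (i) get a quantitative single-time lower bound $u(T_4,\cdot)\ge k_2\, f(\cdot)$ on all of $\overline{\Omega}$, (ii) plug $k_2$ into a separate-variable solution $U(t,x)=(s_0+t)^{-\mu}f(x)$ with $s_0^{-\mu}=k_2$, (iii) conclude by the parabolic Comparison Principle. Step (iii) is identical to the paper's, and the patching logic (interior bound from Proposition \ref{pro_interior_positivity}, boundary-layer bound from the rate $f\asymp d^{1/m}$) is exactly what the paper does to define $k_2=\min\{\epsilon/C,\,\omega^{1/m}C_2\}$.

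The gap is in the mechanism you propose for step (i) near $\partial\Omega$. You write that one can use ``a Barenblatt profile whose free boundary has reached $\partial\Omega$ and which therefore has a definite slope there.'' This is false: Barenblatt profiles have a \emph{flat landing} at the boundary of their support (zero slope), a fact the paper stresses in Section \ref{Sim_Sol_Subsection} precisely to explain why Barenblatt barriers give only bare positivity (Proposition \ref{pro_posit_boundary}) and \emph{cannot} give the linear rate $u^m\gtrsim d(x)$. Your alternative suggestion --- ``a multiple of $f$ localized near the boundary'' --- is essentially circular at this stage, because showing that a multiple of $f$ is an admissible subsolution to compare against $u$ at a fixed time is the very estimate you are trying to prove. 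The missing idea is the paper's \emph{intermediate self-similar solutions} $\mathcal{V}$ of Case III ($\alpha>\beta N$) in Section \ref{Sim_Sol_Subsection}: these compactly supported subsolutions have a \emph{transversal} crossing, $g'(a)=k_0<0$, at the free boundary. Proposition \ref{Pro_comp_InterSelfSim} repeats the Barenblatt comparison argument with $\mathcal{V}$ in place of the Barenblatt solution, and the subsequent Boundary Behaviour proposition uses the nonzero slope $g'(a)<0$ to extract the linear bound $u^m(T_4,x)\ge\omega\,d(x)$ on $\Omega_\delta$, which in turn gives $u(T_4,x)\ge\omega^{1/m}C_2\,f(x)$ there via \eqref{behaviour_f_DNLE}. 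Without replacing the Barenblatt barrier by this transversal-crossing barrier, the boundary-layer estimate does not close, and the comparison step cannot be launched.
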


\noindent Before we start the proof of this theorem, we will establish the following preliminary results.

\noindent Let
\[
\mathcal{V}(x-y,t;M,s)=(t+s)^{-\alpha}[g^{\frac{1}{m}}(|x-y|(t+s)^{-\beta})]_+
\]
be the so called intermediate selfsimilar solutions defined in Section \ref{Sim_Sol_Subsection}.

\medskip

\begin{pro}\label{Pro_comp_InterSelfSim}
Under the assumptions of Theorem \ref{Th_lower_bound}, for every  $y \in \partial \Omega_{I,2\delta}$, where $0<2\delta<\xi_0$ is fixed as in Proposition \ref{pro_interior_positivity}, we can choose constants $M$ and $s$ such that there exists a time $T_3>0$, for which the next inequality holds:
\begin{equation}\label{ineq8}
u(T_1+t,x) \geq \mathcal{V}(x-y,t;M,s) , \quad  \forall x \in \overline{\Omega},\ \forall t \in [0,T_3],\end{equation}
where $T_1$ is the time we have obtained in Proposition \ref{pro_interior_positivity} and $T_3>0$ depends only on $m,p,N,\Omega$ and $u_0$ and it is independent of $y$.
\end{pro}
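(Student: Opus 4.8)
\noindent The plan is to reuse, essentially verbatim, the barrier construction from the proof of Proposition \ref{pro_posit_boundary}, the only change being that the Barenblatt solution $\mathcal{U}$ is replaced by the intermediate selfsimilar solution $\mathcal{V}$ of Section \ref{Sim_Sol_Subsection}. The point of switching to $\mathcal{V}$ is that $\mathcal{V}^{m}$ grows \emph{linearly} at the (spherical) free boundary of its support, which matches the boundary behaviour $f^{m}(x)\asymp d(x)$ recorded in \eqref{behaviour_f_DNLE}; this is exactly what will make it possible, in the proof of Theorem \ref{Th_lower_bound}, to upgrade the lower bound \eqref{ineq8} obtained here to the sharp profile $(s_0+t)^{-\mu}f$.

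First I would fix $0<2\delta<\xi_0$ and recall the constants $\epsilon>0$ and $T_1\ge0$ produced by Proposition \ref{pro_interior_positivity}, so that $u(T_1,\cdot)\ge\epsilon$ on $\Omega_{I,\delta}$. Since $y\in\partial\Omega_{I,2\delta}$ forces $d(y)=2\delta$, one gets $\overline{B}_{2\delta}(y)\subset\overline{\Omega}$ and, by the triangle inequality, $B_{\delta}(y)\subset\Omega_{I,\delta}$. Then, exactly as in the Barenblatt computation \eqref{constants_2}, I would choose the parameters $M$ and $s$ of $\mathcal{V}$ so that
$$\text{supp }\mathcal{V}(\cdot-y,0;M,s)=\overline{B}_{\delta}(y),\qquad \max\,\mathcal{V}(\cdot-y,0;M,s)=\epsilon .$$
Because the support of $\mathcal{V}(\cdot-y,t;M,s)$ is the ball $\overline{B}_{\rho(t)}(y)$ with $\rho(t)$ increasing and depending only on $M$ and $s$, these two conditions pin down $M$ and $s$ as explicit functions of $\delta$ and $\epsilon$, hence of $m,p,N,\Omega,u_0$ alone, with no dependence on $y$; I would then let $T_3>0$ be defined by $\rho(T_3)=2\delta$, which likewise depends only on the data.

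The heart of the argument is the Parabolic Comparison Principle for the DNLE, applied on the cylinder $[0,T_3]\times\Omega$ to $u(T_1+t,x)$ — a weak solution of Problem \ref{DNLE} with datum $u(T_1,\cdot)$, the equation being autonomous in $t$ — and to $\mathcal{V}(x-y,t;M,s)$. On the parabolic boundary the ordering $u\ge\mathcal{V}$ holds: for $t=0$ and $x\in B_{\delta}(y)\subset\Omega_{I,\delta}$ one has $u(T_1,x)\ge\epsilon=\max\mathcal{V}(\cdot-y,0;M,s)\ge\mathcal{V}(x-y,0;M,s)$; for $t=0$ and $x\in\Omega\setminus B_{\delta}(y)$ one has $\mathcal{V}(x-y,0;M,s)=0\le u(T_1,x)$; and for $t\in[0,T_3]$, $x\in\partial\Omega$, the inequality $|x-y|\ge d(y)=2\delta\ge\rho(t)$ places $x$ outside or on the edge of the support of $\mathcal{V}(\cdot-y,t;M,s)$, so $\mathcal{V}(x-y,t;M,s)=0=u(T_1+t,x)$. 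The Comparison Principle then yields $u(T_1+t,x)\ge\mathcal{V}(x-y,t;M,s)$ for all $(t,x)\in[0,T_3]\times\overline{\Omega}$, which is \eqref{ineq8}; and since $M$, $s$ and $T_3$ were built only out of $\delta$ and $\epsilon$, this estimate is uniform over $y\in\partial\Omega_{I,2\delta}$.

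I expect the only genuinely delicate point to be the validity of the comparison step when $\mathcal{V}$ carries a moving interior free boundary. This is standard for the slow-diffusion DNLE — it is in fact already used, without comment, in Proposition \ref{pro_posit_boundary} — and if one wants to spell it out, it can be justified by the $L^1$-type argument of Step 7 in the proof of Theorem \ref{ThAsympBehDNLE}, using the algebraic inequalities \eqref{alg_ineq}--\eqref{alg_ineq2} together with the fact that $\mathcal{V}$ vanishes, with the right rate, at its free boundary and on $\partial\Omega$. Everything else is bookkeeping aimed at keeping all the constants independent of $y$.
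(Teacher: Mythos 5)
Your proposal is correct and follows essentially the same route as the paper: the paper's own proof simply says that the argument of Proposition \ref{pro_posit_boundary} carries over verbatim with the Barenblatt profile replaced by the intermediate self-similar solution $\mathcal{V}$, normalised so that $\text{supp}\,\mathcal{V}(\cdot-y,0;M,s)=\overline{B}_\delta(y)$ and $\max\mathcal{V}(\cdot-y,0;M,s)=\epsilon$, with $T_3$ defined as the time at which the support reaches $\overline{B}_{2\delta}(y)$, followed by the Parabolic Comparison Principle on $[T_1,T_1+T_3]\times\overline{\Omega}$. Your version just spells out the parabolic-boundary checks and the $y$-independence of the constants more explicitly than the paper does.
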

\begin{proof}
The same ideas as in Proposition \ref{pro_posit_boundary} apply since the functions $\mathcal{V}$  have a similar behaviour as the Barenblatt functions. Consider the selfsimilar subsolution $\mathcal{V}$ such that
$$\text{supp }\mathcal{V}(x-y,0;M,s)=  \overline{B}_{\delta}(y) \  \text{ and } \ \text{max } \mathcal{V}(x-y,0;M,s)= \epsilon,$$
that is
\begin{align*}\label{constants_4}
M=\epsilon s^{\alpha},  \quad \delta= s^{\beta}a(M)=s^{\beta}M^{\frac{m(p-1)-1}{p}}a(1) \quad
\text{and} \quad  s= \left (\frac{\delta}{a(1)}\right)^p \epsilon^{-(m(p-1)-1)} .
\end{align*}
The exact values of $M$ and $s$ are not important; what matters is that they depend only on the data and, in particular,
they are independent of $y$.

Now, consider the time $T_3=t$ when $\text{supp }\mathcal{V}(x-y,t;M,s)$ riches the boundary of $ \Omega$, that is when
\begin{equation}\label{support_V}
\text{supp }\mathcal{V}(x-y,t;M,s)= \overline{B}_{2 \delta}(y)
\end{equation}
from where we deduce the explicit value for $T_3$
\begin{equation}\label{constants_5}
T_3=\left(\frac{2\delta}{a}\right)^{1/\beta}-s.
\end{equation}
Then the Parabolic Comparison Principle can be applied to $u$ and $\mathcal{V}$ on the parabolic domain $[T_1,T_1+T_3] \times \overline{\Omega}$ as in Proposition \ref{pro_posit_boundary} and we obtain that
\begin{equation*}\label{ineq88}
u(T_1+t)\geq \mathcal{V}(x-y,t;M,s), \quad \forall x\in \Omega, \ \forall t \in[0,T_3].
\end{equation*}
We notice from \eqref{constants_4} and \eqref{constants_5} that $T_3$ does not depend on the point $y$, but only on the data.
\end{proof}

\noindent We define
\begin{equation}\label{T4}
T_4=T_1+T_3
\end{equation}
where $T_1$ and $T_4$ are given by Proposition \ref{pro_interior_positivity}, respectively Proposition \ref{Pro_comp_InterSelfSim}.
\medskip

\begin{pro}\textbf{(Boundary behaviour)}
Under the hypothesis of Theorem \ref{Th_lower_bound}, let $T_4$ as in \eqref{T4} and $\delta$ as in Proposition \ref{pro_interior_positivity}.
Then there exists a constant $\omega >0$
$$u^m(T_4,x) \geq \omega d(x) \quad \text{for } x \in \Omega_{\delta}.$$
\end{pro}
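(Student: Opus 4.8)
The goal is a clean linear-growth lower bound at a single fixed time $T_4$, to serve as the starting datum for a comparison argument yielding the sharp lower bound of Theorem \ref{Th_lower_bound}. The plan is to split $\Omega_\delta$ into a collection of balls sitting just inside $\Omega$ and on each one compare $u^m(T_4,\cdot)$ from below with a comparison function that we already know has linear boundary growth, namely the (power of the) asymptotic profile $f$ or, more conveniently, the intermediate selfsimilar solution $\mathcal V$ constructed in Proposition \ref{Pro_comp_InterSelfSim}. Recall from \eqref{behaviour_f_DNLE} that $C_1 d(x)^{1/m}\le f(x)$, so $f^m(x)\ge C_1^m d(x)$; hence it suffices to bound $u^m(T_4,\cdot)$ below by a fixed multiple of $f^m$ on $\Omega_\delta$, or equivalently by a fixed multiple of $d(x)$ there.

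First I would invoke Proposition \ref{Pro_comp_InterSelfSim}: for each $y\in\partial\Omega_{I,2\delta}$ we have $u(T_4,x)=u(T_1+T_3,x)\ge \mathcal V(x-y,T_3;M,s)$ for all $x\in\overline\Omega$, and by \eqref{support_V} the support of $\mathcal V(\cdot-y,T_3;M,s)$ is exactly $\overline B_{2\delta}(y)$. Next I would cover $\Omega_\delta$ by such balls: by \eqref{boundary_property}, every $x\in\Omega_{2\delta}\supset\Omega_\delta$ lies in some $B_{2\delta}(y)$ with $y\in\partial\Omega_{I,2\delta}$, and for a given $x\in\Omega_\delta$ one can choose $y$ so that $x$ is comfortably interior to this ball — precisely, with $|x-y|\le 2\delta-d(x)$ or some comparable estimate, using that $d(x)<\delta$. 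Then I would use the explicit profile shape: $\mathcal V(x-y,T_3;M,s)=(T_3+s)^{-\alpha}[g^{1/m}(|x-y|(T_3+s)^{-\beta})]_+$, where $g$ is smooth, positive on $[0,a(1))$, vanishes linearly at the edge $a(1)$ of its support (since $g^{1/m}$ gives a linear growth profile, i.e. $g$ behaves like $(\text{const})\cdot(\text{edge}-r)^m$ near the edge), and in particular $g^{1/m}(\rho)\ge c\,(\rho_{\max}-\rho)_+$ for $\rho$ near $\rho_{\max}$. Translating $|x-y|(T_3+s)^{-\beta}$ close to the support edge back into the $x$-variable, the gap $\rho_{\max}-|x-y|(T_3+s)^{-\beta}$ is comparable to $2\delta-|x-y|$, which in turn dominates a fixed multiple of $d(x)$ by the ball choice. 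Raising to the $m$-th power, $\mathcal V^m(x-y,T_3;M,s)\ge \omega\, d(x)$ on $\Omega_\delta$ with $\omega>0$ depending only on $M,s,T_3,\delta$ — hence only on the data, by Proposition \ref{Pro_comp_InterSelfSim}. Combining with the comparison inequality gives $u^m(T_4,x)\ge\omega\,d(x)$ for all $x\in\Omega_\delta$.

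The main obstacle I anticipate is making the profile-edge estimate uniform and quantitative: one needs that $g$ (the selfsimilar profile from Section \ref{Sim_Sol_Subsection}) actually detaches from zero with a genuine linear rate in $u=\mathcal V$, i.e. with $\mathcal V^{m}$ Lipschitz and non-degenerate at the free boundary, and that the constants in this behaviour are controlled purely by $M,s$ (which are themselves data-dependent). This is exactly the finite-speed-of-propagation/free-boundary regularity for the selfsimilar solutions of the DNLE; it is presumably recorded in Section \ref{Sim_Sol_Subsection}. A secondary, more routine point is the covering/ball-placement geometry near $\partial\Omega$ — ensuring that for every $x$ with $d(x)<\delta$ one can anchor a ball $B_{2\delta}(y)$, $y\in\partial\Omega_{I,2\delta}$, with $x$ interior at distance $\gtrsim d(x)$ from the sphere — which follows from the smoothness of $\partial\Omega$ and the properties of $d(x)$ collected in Subsection \ref{subsection_dist_bdry} of the Appendix. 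Once these two ingredients are in place, the estimate $u^m(T_4,x)\ge\omega\,d(x)$ on $\Omega_\delta$ is immediate, and $\omega$ depends only on $m,p,N,\Omega,u_0$ as claimed.
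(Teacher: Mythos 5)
Your overall strategy is exactly the paper's: compare $u(T_4,\cdot)$ from below with the intermediate self-similar subsolution $\mathcal V(\cdot-y,T_3;M,s)$ from Proposition \ref{Pro_comp_InterSelfSim}, exploit the transversal (finite-slope) crossing of the profile at the free boundary, and sweep the base point $y$ over $\partial\Omega_{I,2\delta}$ using the normal map of Lemma \ref{properties_d(x)}. The covering geometry and the claim that $T_3,M,s$ (hence $\omega$) depend only on the data are all correct.

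However, there is a genuine error in the stated edge behaviour of the profile, and it is not cosmetic: as written, your chain of inequalities produces the wrong power of $d(x)$. You first (correctly) say that $g$ vanishes linearly at the edge $a$ of its support, which is exactly what \eqref{prop_g} records, namely $g'(a)\neq 0$. But then you justify this by asserting that ``$g^{1/m}$ gives a linear growth profile'' and hence $g\sim (a-r)^m$, and you write the key inequality as $g^{1/m}(\rho)\ge c(\rho_{\max}-\rho)_+$. This is inconsistent with $g$ vanishing linearly unless $m=1$: since $g(r)\sim |g'(a)|\,(a-r)$, it is $g$ itself, not $g^{1/m}$, that has a linear lower bound; the density profile $h=g^{1/m}$ vanishes like $(a-r)^{1/m}$. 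If one plugs your inequality into $\mathcal V=(T_3+s)^{-\alpha}g^{1/m}$ and ``raises to the $m$-th power'', one obtains $\mathcal V^m\gtrsim (\rho_{\max}-\rho)^m\gtrsim d(x)^m$, which is weaker than the claimed bound when $m>1$ and false in general when $m<1$. The correct derivation, and what the paper does, is to observe that $\mathcal V^m=(T_3+s)^{-\alpha m}\,g$, so the linear lower bound $g(\eta)\ge |k_1|(a-\eta)$ near the edge transfers directly to $\mathcal V^m\ge |k_1|(T_3+s)^{-\alpha m-\beta}\,(2\delta-|x-y|)=\omega\,d(x)$ on the inward normal segment through $y$, after writing $a=2\delta(T_3+s)^{-\beta}$ and using $d(x)=2\delta-|x-y|$ for $x$ on that segment. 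With this single correction your argument becomes the paper's.
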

\noindent such that $\omega$ depends only on $m,p,N,\Omega$ and $u_0$.
\begin{proof}

Let $y \in \partial \Omega_{I,2 \delta}$ . Then by Lemma \eqref{properties_d(x)}, there exists a unique $z(y) \in \partial \Omega$ such that $\delta=d(y)=|y-z(y)|$.
Let
$$
\mathcal{V}(x-y,T_3;M,s)=(T_3+s)^{-\alpha}[g^{\frac{1}{m}}(|x-y|(T_3+s)^{-\beta})]_+
$$
be the self-similar subsolution obtained in Proposition \ref{Pro_comp_InterSelfSim} where $M$ and $s$ are given by formulas \eqref{constants_4}. Let $[0,a)$ the largest interval starting from $0$ where $g>0$ and by \eqref{support_V} it follows that  $a=a(M)=2\delta(T_3+s)^{-\beta} $.
Then, in view of the continuity of $g'$, there exist $k_0<k_1<0$ such that
$$ k_0\leq g'(\eta)\leq k_1, \quad  \forall \eta \in [\delta(T_3+s)^{-\beta}  ,2\delta(T_3+s)^{-\beta} ],$$
and it follows that
\begin{equation}\label{ineq11}
g(\eta) \geq |k_1|(a-\eta), \quad \forall \eta \in  [\delta(T_3+s)^{-\beta}  ,2\delta(T_3+s)^{-\beta} ].
\end{equation}
Thus it follows from \eqref{ineq8} and \eqref{ineq11} that for every $x \in \Omega$ on the segment between $y$ and $z(y)$ such that $\delta<|x-y| <2\delta$, that is $d(x)<\delta$, $u$  can be bounded from bellow as follows
\begin{align}\label{ineq9}
u^m(T_4,x) &\geq \mathcal{V}^m((x-y,T_3;M,s)= (T_3+s)^{-\alpha m}g(|x-y|(T_3+s)^{-\beta})  \nonumber \\
&\geq |k_1|(T_3+s)^{-\alpha m} \left(2\delta(T_3+s)^{-\beta}  - |x-y|(T_3+s)^{-\beta}\right)  \nonumber\\
&= |k_1|(T_3+s)^{-\alpha m - \beta} (2\delta -|x-y|) \nonumber \\
&=\omega d(x),
\end{align}
where $\omega=|k_1|(T_3+s)^{-\alpha m- \beta}  \in \mathbb{R}^+$ is a constant which depends on the data, but not on $y$ and $x$.
Observe that \eqref{ineq9} holds for arbitrary $y \in \partial \Omega_{I,2 \delta}$ and for all $x$  on the inward directed normal through $y$ provided that $d(x)\leq \delta$. As we remarked in Lemma \eqref{properties_d(x)}, the normal map $H_r$ is a homeomorphism for all $r \in [0,\xi_0)$.
Therefore, it follows from \eqref{ineq9} that
\begin{equation}\label{ineq10}
u^m(T_4,x) \geq \omega d(x) \quad \text{for } x \in  \Omega_{\delta}.
\end{equation}

\end{proof}

\noindent \textbf{Proof of Theorem \ref{Th_lower_bound}.}
First we will prove there exists $k_2>0$ such that
\begin{equation}\label{lower_comp_f_2}
u(T_4,x) \geq k_2 f(x) , \quad \forall x \in \Omega.
\end{equation}
By Proposition \ref{Pro_comp_InterSelfSim} and Theorem \ref{behaviour_f_DNLE} $u$ satisfies
$$u^m(T_4,x) \geq \omega d(x)\geq \omega C_2^m f^m(x),    \quad \text{for } x \in  \Omega_{\delta}.$$
Moreover, by Proposition \ref{pro_interior_positivity} and the boundedness of the profile $0 \leq f \leq C$ in $\Omega$ we obtain that
$$u(T_1,x) \geq \epsilon \geq \frac{\epsilon}{C}f(x), \quad \forall x \in\Omega_{I,\delta}.$$
Then, since $T_1<T_4$,  inequality \eqref{lower_comp_f_2} is satisfied with $\displaystyle{k_2= \min\{\epsilon/C, \omega^{1/m}C_2\}.}$

Finally, let $U(t,x)=(s_0 +t)^{-\mu}f(x)$ be the separable solution of the DNLE with initial data $s_0^{-\mu}f$, where $s_0$ is defined by the relation $s_0^{-\mu}=k_2$. Then
$$u(T_4,x) \geq U(0,x), \quad \forall x \in \Omega,$$
and \eqref{ineq12} follows by Comparison Principle.

\noindent \textbf{End of the proof of Theorem \ref{Th_lower_bound}.}

\end{section}

\begin{section}{Study of self-similar solutions for the DNLE}\label{Sim_Sol_Subsection}

In this section we will illustrate a short description of the self-similar solutions of the DNLE, focusing on the properties useful for the proofs in the
paper. A complete analysis of these solutions is beyond the purpose of our paper and it can be the subject of a future work.

For a complete characterization of self similar solutions of the PLE in the case $p>2$
we refer to \cite{CambridgeJournals:4773264}. Likewise, for the relation between self-similar solutions of the PLE and those of the
PME we make reference to \cite{MR2378087}.

\emph{Self-similar solutions} of the DNLE are functions of the form
$$\mathcal{U}(t,x)=(t+s)^{-\alpha}h(r),\quad r=|x|(t+s)^{-\beta},$$
where $s\geq0$ is a constant, $\alpha$ and $\beta$ are positive parameters related by
\begin{equation}\label{relation_alpha_beta}
(m(p-1)-1)\alpha + p \beta =1.
\end{equation}
The profile $g:=h^m:[0,\infty)\rightarrow \mathbb{R}$ is a function satisfying the differential equation
\begin{equation}\label{odeDNLE}
 \alpha g^{\frac{1}{m}}(\eta) + \beta r \left(g^{\frac{1}{m}}\right)'(r) + \frac{N-1}{r}|g'(r)|^{p-2}g'(r)+ (p-1)|g'(r)|^{p-2}g''(r)=0,  \quad r>0,
\end{equation}
also written in the equivalent form
\begin{equation}\label{odeDNLE_equiv_form}
 \alpha h(r) + \beta r h'(r) + \frac{1}{r^{N-1}}\left(r^{N-1}|g'(r)|^{p-2}g'(r))\right)'=0,  \quad r>0.
\end{equation}

Self-similar solutions are (possibly signed) solutions of the DNLE in the whole space. When the support of the positive part of
such a function $\mathcal{U}$ is included in $\Omega$ then $\mathcal{U}_+$ is a sub-solution of the DNLE equation. For this reason, self-similar solutions are
useful to indicate the behaviour of a general solution $u$ of the DNLE.

We consider the initial conditions
\begin{equation}\label{init_cond}
 h(0)=M^m , \quad h'(0)=0.
 \end{equation}

The existence of a positive solution of ODE \eqref{odeDNLE} with initial conditions \eqref{init_cond}  on an interval $[0, a$), where $a \in (0,\infty]$,  can be proved using fixed point methods when $p\leq 2$ and  using phase plane methods when $p>2$  (we refer to \cite{CambridgeJournals:4773264} when $p>2$ where the author discusses the case of the PLE). As far as we know, fixed point methods do not work when  $p>2$.

If one multiplies the ODE by $r^{N-1}$ and integrate between $0$ and $r$, where $r \in [0,a)$, it follows that
\begin{equation}\label{formula_g_deriv}
|g'(r)|^{p-2}g' (r)= -\beta r g^{\frac{1}{m}}(r) - \frac{\alpha-\beta N}{r^{N-1}}\int_0^{r}s^{N-1}g^{\frac{1}{m}}(s) ds,
\end{equation}
or, equivalently,
\begin{equation}\label{formula_g_deriv2}
|g'(r)|^{p-2}g' (r)+ \frac{\beta N-\alpha}{Nr^{N-1}}\int_0^{r}s^{N}(g^{\frac{1}{m}})'(s) ds= -\frac{\alpha}{N} g^{\frac{1}{m}}(r) .
\end{equation}

We will make a formal study of self-similar solutions of the DNLE by considering the following cases: $\beta=0$, $\alpha-\beta N=0$,  $\alpha-\beta N>0$
and $\alpha-\beta N<0$. We define the numbers
\begin{equation*}
\alpha_B:=\frac{1}{m(p-1)-1+(p/N)}, \quad \beta_B:=\frac{\alpha_B}{N}=\frac{1}{(m(p-1)-1)N+p}, \quad \alpha_0:=\frac{1}{m(p-1)-1}.
\end{equation*}

\begin{figure}[!h]\label{figure_self_sim_sol}
	    \centering
        \subfigure[Case $\beta=0, \alpha=\alpha_0$]{
	    \includegraphics[width=70mm,height=46mm]{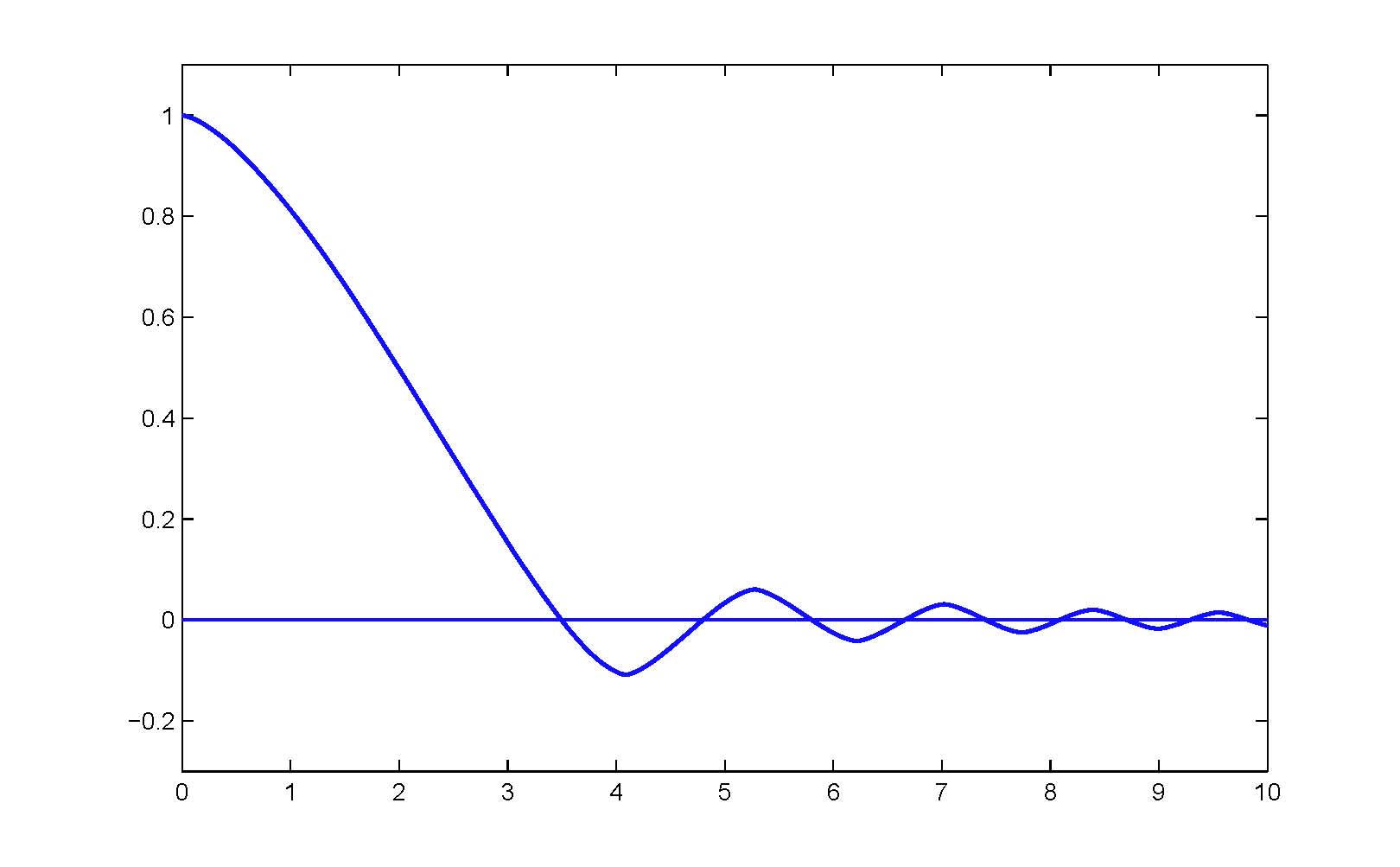}
	    }\qquad
        \subfigure[Case $\alpha=\beta N$]{
	    \includegraphics[width=60mm,height=46mm]{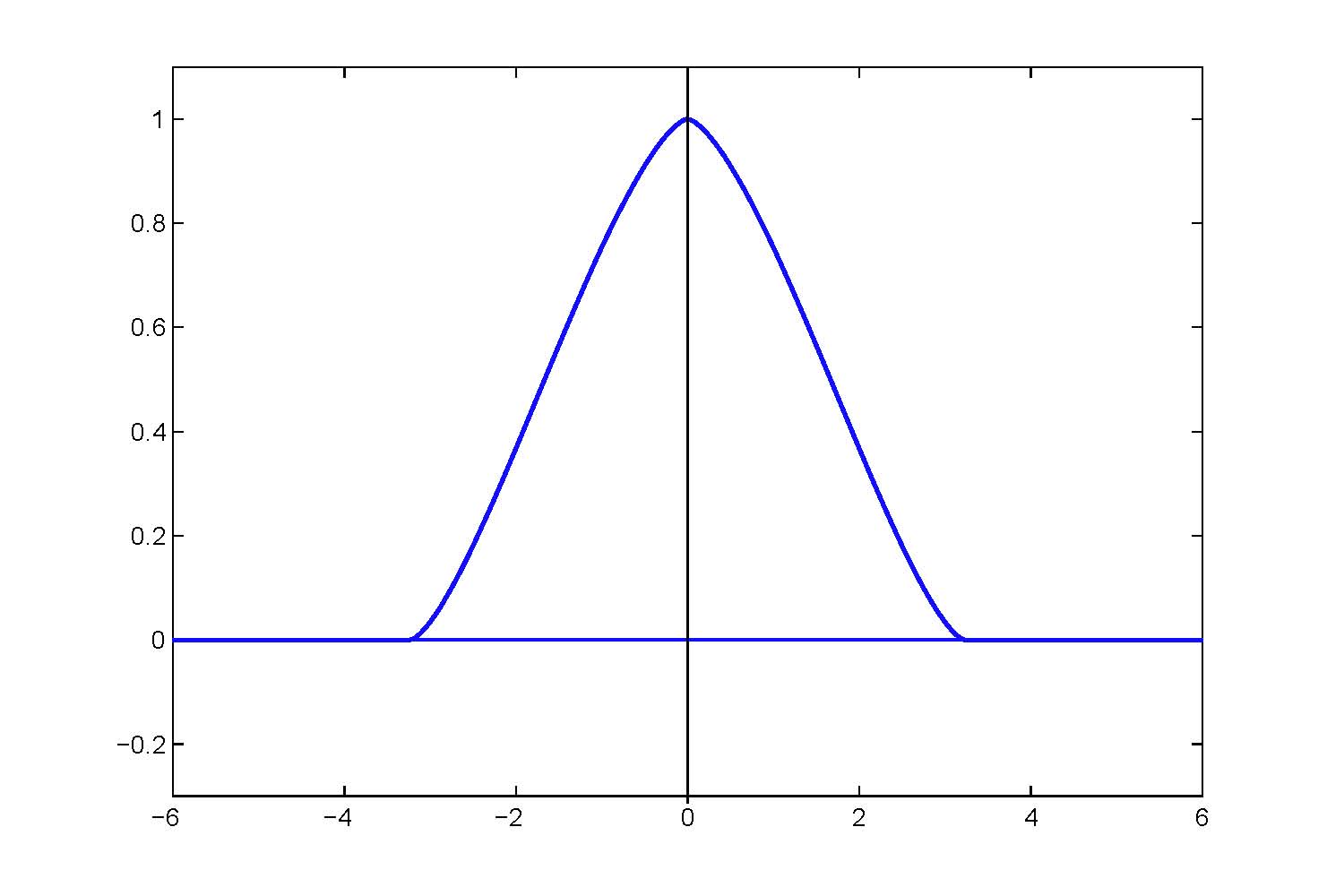}
	    }
	    \caption{Self-similar solutions of the DNLE}
\end{figure}

\noindent \textbf{I. Case $\beta=0, \ \alpha=\alpha_0$. Separate variables solutions}

\noindent They have the form $$U(t,x)=(t+s)^{\mu}f(x),$$
where $s>0$ is a constant and $f$ is the solution of the stationary problem \eqref{eq_f_DNLE}.
Notice that the functions belonging to this family are self-similar solutions according to the previous definitions when $f$ is a radial function. These functions are very useful since they indicate the asymptotic behaviour of $u$, the general solution of the DNLE and thus we will use them for comparison.
A second aspect is that they do not propagate and thus we have to consider also different types of self-similar solutions.

\medskip
\noindent \textbf{II. Case $\alpha=\beta N$. Barenblatt solutions}

 For more details we refer to \cite{JLVSmoothing}. A Barenblatt solution (also called source type solution) exists for the DNLE in the "good range" $$m(p-1)+\frac{p}{N}>1$$ that includes of course $m\geq 1$ (the degenerate PME) and $p\geq 2$  (the degenerate PLE).
When, moreover, $m(p-1)>1$ (our case), Barenblatt solutions have the form:
$$\mathcal{U}(x,t;a,s)=c(t+s)^{-\alpha}\left(a^{\frac{p}{p-1}}-|x(t+s)^{-\beta}|^{\frac{p}{p-1}}\right)_+^{\frac{p-1}{m(p-1)-1}},$$
where $s>0$ is a positive parameter and
\begin{equation}\label{alpha_beta_c}
\alpha=\alpha_B=\frac{1}{m(p-1)-1+(p/N)}, \quad \beta=\beta_B=\frac{\alpha_B}{N}, \quad c=\left(\frac{m(p-1)-1}{p} \left (\frac{\alpha}{N}\right)^\frac{1}{p-1} \right)^{\frac{p-1}{m(p-1)-1}}.
\end{equation}
When $s=0$, this function has a Dirac delta as initial trace
$$\lim_{t \rightarrow 0}\ \mathcal{U}(x,t)=M \delta_0(x).$$
The remaining parameter $a>0$ is free and can be uniquely determined in terms of the initial mass $\displaystyle{\int \mathcal{U} dx=M_0}.$

Barenblatt solutions are compactly supported and they propagate with finite speed. We will use them as a lower bound in order to prove the positivity
of $u$ inside $\Omega$. Since they have a flat landing contact (zero derivative at the boundary of their support) we can not obtain a
quantitative lower bound for $u$ up to the boundary of $\Omega$. Their advantage is that they have an explicit formula which is very advantageous for computations.

\medskip
\noindent \textbf{III. Case $\alpha> \beta N$. Intermediate self-similar solutions}

In this case $$\alpha>\alpha_B, \quad 0<\beta<\beta_B.$$
This family of self-similar solutions, which we denote by $\mathcal{V}$, inherits some useful properties of the Barenblatt solutions and the separate variables solutions: $\mathcal{V}$ has a compact support that propagates and $g=h^m$ has a transversal cross through the $r$ axis.
This is explained as follows. Consider $[0,a)$ the largest interval starting from $0$ where $h>0$. Then, from \eqref{formula_g_deriv} we obtain that
$g' (r)<0$ for  $r \in [0,a)$ and thus
$$|g'(r)|^{p-2}g' (r)\leq -\beta r g^{\frac{1}{m}}(r).$$
Furthermore, this implies that
$$-g'(r) \geq \beta^{\frac{1}{p-1}} r^{\frac{1}{p-1}} g^{\frac{1}{m(p-1)}}.$$
Integrating from $0$ to $r$ with $g(0)=M^m$ we obtain that
$$g(r) \leq \left(M^{\frac{m(p-1)-1}{p-1}}- \frac{m(p-1)-1}{mp}\beta^{\frac{1}{p-1}} r^{\frac{p}{p-1}}\right)^{\frac{m(p-1)}{m(p-1)-1}},$$
for all $r \in (0,a).$
From this upper bound we derive an estimate for the point $a$ where $h(a)=0$
$$a\leq  M^{\frac{m(p-1)-1}{p}} \left( \frac{mp}{m(p-1)-1} \right)^{\frac{p-1}{p}} \beta^{-\frac{1}{p}}.$$
The important fact is that $a$ is finite, thus $\mathcal{V}$ has a transversal cross through the $r$ axis at the point $r=a$ with
\begin{equation}\label{prop_g}
g' (a) = - \left(\frac{\alpha-\beta N}{a^{N-1}}\int_0^{a}s^{N-1}g^{1/m}(s) ds.\right)^{\frac{1}{p-1}}=:k_0 < 0.
\end{equation}
We will use the form
$$\mathcal{V}(x,t;M,s)= (t+s)^{-\alpha}[h(r;M)]_+,$$
and therefore the following characterization of the support
$$\text{supp }\mathcal{V}(x,t;M,s)=  \{(x,t):|x|\leq a(t+s)^{\beta}, t\geq 0 \}.$$
We denote by
$$
a=a(M) , \quad k=k(M),\quad h(\cdot)=h(\cdot;M)
$$
in order to emphasize their correspondence to the Cauchy problem with initial conditions $ h(0)=M, \  h'(0)=0.$
We remark that
\begin{equation}\label{constants_4}
h(a(M);M)=0, \quad h(r;M)=M h(M^{-\frac{m(p-1)-1}{p}}r;1) \quad \text{  and  } a(M)=M^{\frac{m(p-1)-1}{p}}a(1).
\end{equation}
Subsequently we will consider the self-similar solutions described above only on $[0,a)$, the largest interval starting from $0$ where they are positive;
 for complete definition, on $[a, \infty)$ they are assigned zero values. This way, they are sub-solutions of the DNLE.

\noindent \textbf{Remark.} In the present work we do not study the behavior of these functions when then $g$ takes negative values.
Depending on the values of $\alpha$ and $\beta$, $g$ can behave differently, as we can see in Figure 3.

\begin{figure}[!h]\label{figure2_self_sim_sol}
	    \centering
\subfigure[Case $\alpha>\beta N$]{
	   \includegraphics[width=50mm,height=30mm]{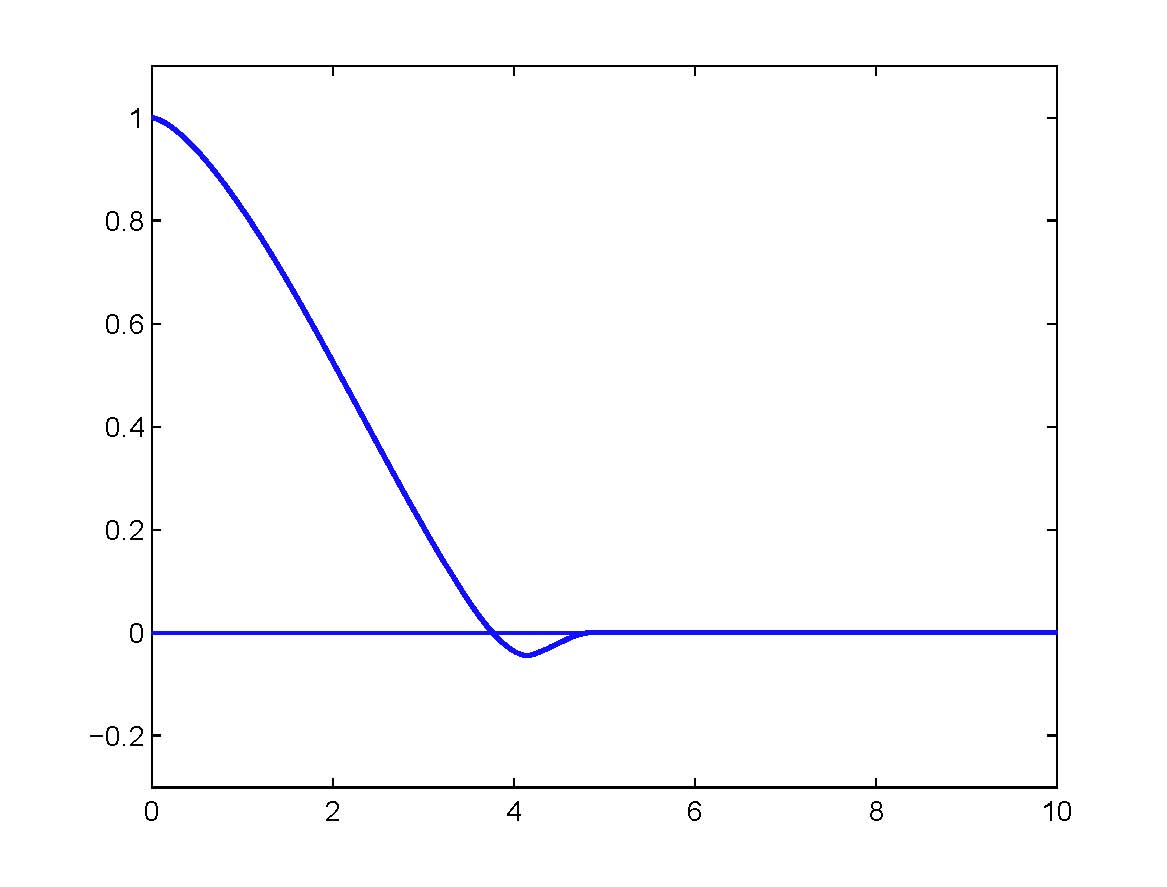}}
        \subfigure[Case $\alpha>\beta N$]{
	   \includegraphics[width=50mm,height=30mm]{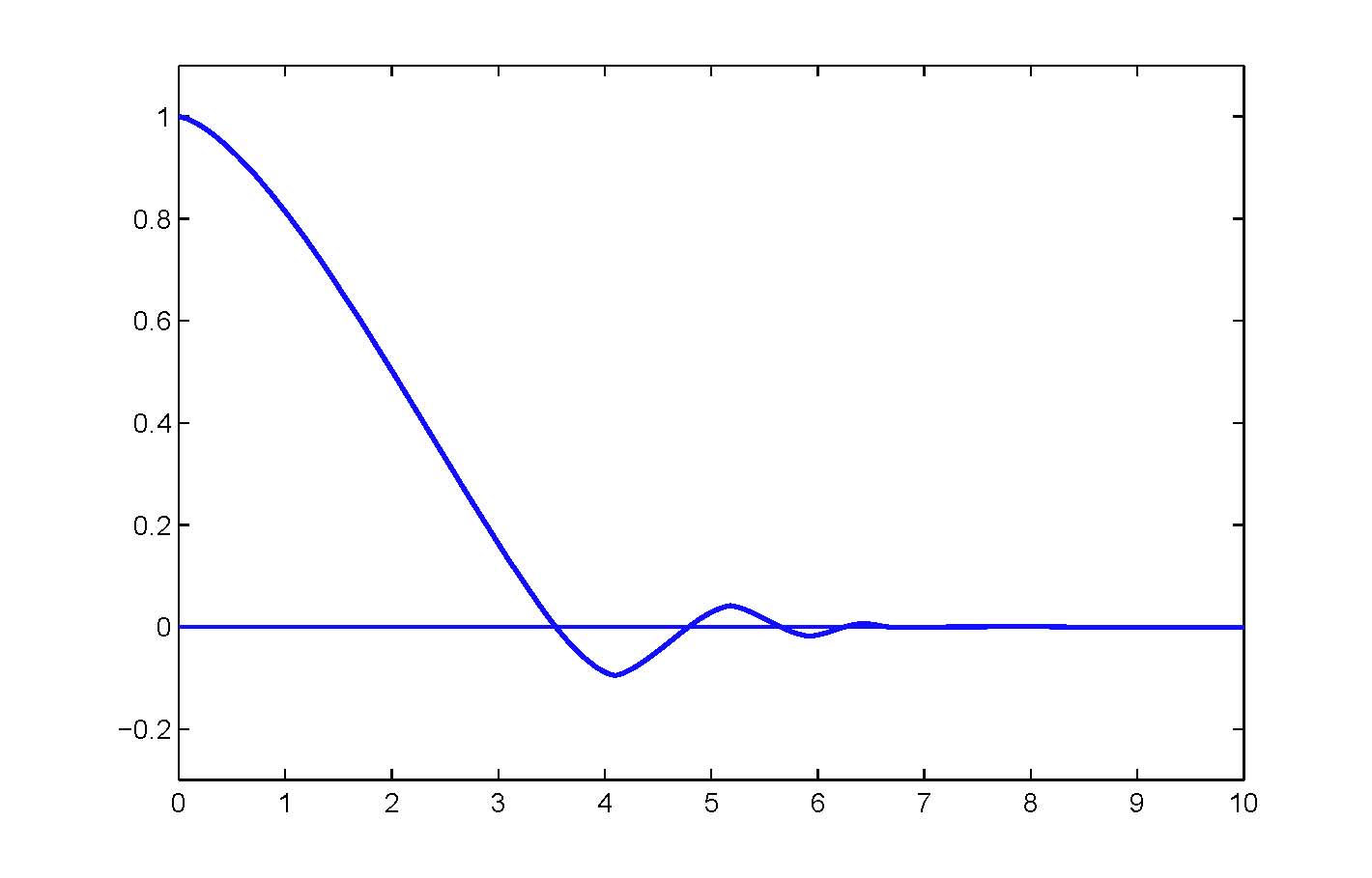}}
        \subfigure[Case $\alpha<\beta N$]{
	    \includegraphics[width=50mm,height=30mm]{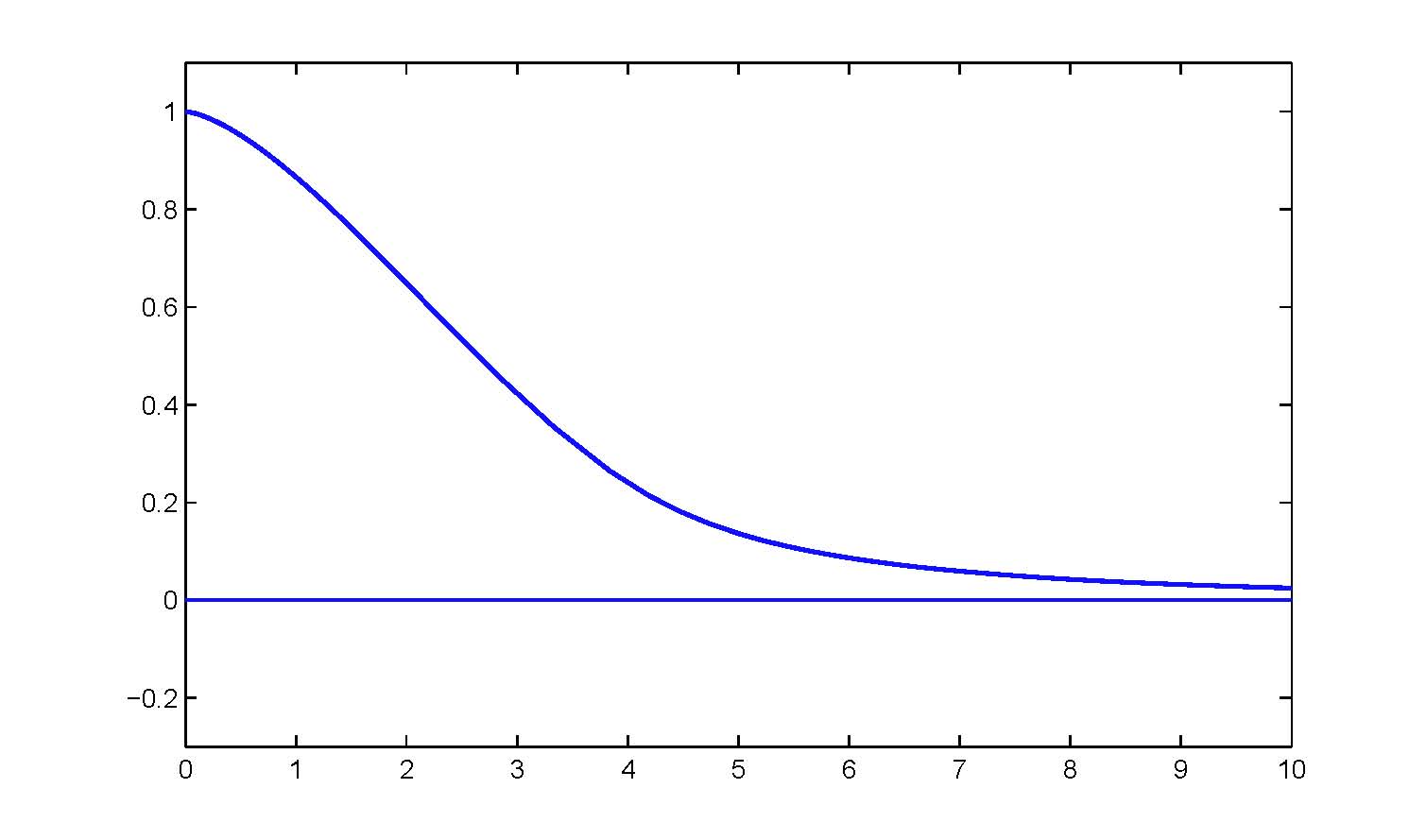}
	    }
	    \caption{Self-similar solutions of the DNLE}
\end{figure}

\newpage

\noindent \textbf{IV. Case $\alpha< \beta N$}

The self-similar function corresponding to the profile $g$ in this case does not have a compact support, hence this class is not useful for our estimates.
For completeness we will provide a formal characterization of these functions.

Recall that in this case $$0<\alpha<\alpha_B, \quad \beta>\beta_B.$$

Using basic computations as in the previous case, one can easily prove that $g$ is a positive decreasing function converging to $0$ when $r$ goes to $\infty$.
Moreover, for every $r_0>0$ there exists $C=C(r_0)>0$ such that
 \begin{equation}\label{lower_estim_g}
 g(r)\geq C r^{-\frac{\alpha m}{\beta}}, \quad \forall r\geq r_0.
 \end{equation}

\textbf{Asymptotic decay.} We point out that $g$ behaves as $r \rightarrow \infty$ like $G_a(r)=a r^{-\gamma}$, $\gamma=\alpha m/ \beta.$
We continue with a formal proof.

\noindent We will consider the following series expansions of $g$:
$$g(r)= a_1 r^{-\gamma}+ ... $$
as $r$ goes to $\infty$, where $\gamma>0$ is an exponent to be determined and "..." is representative for lower order terms.
Then
\begin{align*}
\alpha h(r) + \beta r h'(r) &= a_1^{\frac{1}{m}}\left(\alpha - \beta \frac{\gamma}{m}\right)r^{-\frac{\gamma}{m}}+...,\\
\frac{1}{r^{N-1}}\left(r^{N-1}|g'(r)|^{p-2}g'(r))\right)' &=a_1^{p-1}\gamma^{p-1}( \gamma(p-1)+p-N)r^{-(\gamma+1)(p-1)-1}+....
\end{align*}

For a comparison of the first terms we notice that
$$-(\gamma+1)(p-1)-1 < -\frac{\gamma}{m}.$$
\noindent Thus \emph{the leading asymptotic term} in the expansion of the ODE formula \eqref{odeDNLE_equiv_form} is
$$a_1^{\frac{1}{m}}\left(\alpha - \beta \frac{\gamma}{m}\right)r^{-\frac{\gamma}{m}}.$$
Moreover, we notice that the coefficient $\alpha - \beta \gamma/m=0,$ from where we deduce that \emph{exponent of the leading asymptotic term} is
\begin{equation}\label{gamma_formula}
\gamma=\frac{\alpha m}{\beta}.
\end{equation}
At this time we have no information about $a_1$. The coefficient of the remaining term is
$$
a_1^{p-1}\gamma^{p-1}( \gamma(p-1)+p-N),$$
 whose sign depends on the values of $\beta.$

Let $$\beta_B:=\frac{1}{\left( m(p-1)-1+p/N \right)N}, \quad \alpha_B:=\frac{1}{m(p-1)-1+p/N },$$
$$\gamma_1:= \frac{N-p}{p-1}, \quad \beta_1:=\frac{m(p-1)}{\left( m(p-1)-1+p/N \right)N}=m(p-1)\beta_B, \quad \alpha_1:=\frac{1-p\beta_1}{m(p-1)-1}.$$

The sign of coefficient $\gamma(p-1)+p-N$ can be now obtained depending on the values of $\beta.$
\begin{enumerate}
  \item Case
  $\gamma(p-1)+p-N>0 \Leftrightarrow \left\{
    \begin{array}{ll}
      \gamma>\gamma_1, \ \beta<\beta_1, \ \alpha>\alpha_1; \\[2mm]
      \text{or }p \geq N.
    \end{array}
  \right.
  $

  \item Case
$
\gamma(p-1)+p-N<0 \Leftrightarrow \left\{
    \begin{array}{ll}
      \displaystyle{\beta\in (\beta_1, 1/p), \ \alpha<\alpha_1,} & \hbox{$p<N$;} \\[2mm]
      \text{impossible}, & \hbox{$p \geq N$.}
    \end{array}
  \right.
  $

  \item Case $\gamma(p-1)+p-N=0 \ \Leftrightarrow \ \gamma=\gamma_1,\ \beta=\beta_1, \ \alpha=\alpha_1$.
  Then $g(r)=r^{-\gamma}$ is the solution of the ODE \eqref{odeDNLE}.
\end{enumerate}

We can observe that in the first two cases we can not deduce the decay of $g$ and we need to perform a second approximation.
\end{section}

\begin{section}{The quasilinear case $m(p-1)=1$}\label{Section_Quasilinear}

In this section we consider Problem \eqref{DNLE} for $m>0$, $p>1$, posed in a bounded domain $\Omega \in \mathbb{R}^N$ with smooth boundary of
class $C^{2,\alpha}$ and initial data $u_0 \geq 0$, $u_0 \in L^1(\Omega)$.  We study the large-time asymptotic behaviour of solutions of the problem
in the quasilinear case $m(p-1)=1$.

As usual, the problem is better understood via the method of rescaling. We consider
\begin{equation}\label{formula_v_quasilineal}
v(t,x)=e^{\lambda t}u(t,x), \quad t\in [0,\infty), x\in \Omega,
\end{equation}
where $\lambda$ is a real parameter whose choice we will justify in the next subsection.
Then $v$ is a solution of the \emph{rescaled problem}
\begin{equation}\label{eq_v_quasilinear}
  \left\{ \begin{array}{ll}
  v_{t}(t,x) = \Delta_p v^m(t,x)+ \lambda v(t,x) &\text{for }t>0 \text{ and } x \in \Omega, \\
  v(0,x)  =u_0(x) &\text{for } x \in \Omega, \\
  v(t,x)=0   &\text{for }t>0 \text{ and } x \in \partial \Omega.
    \end{array}
    \right.
\end{equation}

 The study of the asymptotic behaviour in the present case $m(p-1)=1$ differs considerably from the case $m(p-1)>1$ previously studied for several reasons.
 Firstly, the proof in the case $m(p-1)>1$ is based on monotonicity: the rescaled solution $t^{{\mu}}u(t,x) \nearrow v(t,x)$.
 This argument cannot be applied in the present case. Besides, we have no universal a-priori estimates similar to the degenerate case.

\subsection{The associated stationary problem}\label{subs5.1}

Consider the stationary problem associated to Problem \ref{eq_v_quasilinear}:
 \begin{equation}\label{eqf}
  \Delta_p f^m + \lambda f =0  \quad  \text{in } \Omega,  \qquad f(x)=0   \quad \text{for } x \in \partial \Omega,
    \end{equation}
that can be rewritten in terms of $z=f^m$ as
\begin{equation}\label{eqZ}
\Delta_p z+ \lambda |z|^{p-2}z=0   \quad \text{in } \Omega, \qquad  z=0   \quad \text{on }\partial \Omega.
    \end{equation}
We want to obtain solutions $z(x)>0$ in $\Omega$. We call \emph{eigenvalues} the $\lambda$-s for which there exists a nontrivial solution of Problem \ref{eqZ}, which is known as the eigenvalue problem for the $p-$Laplacian. The following result was proved in \cite{Anane,Lindqvist}.

\medskip

\begin{theorem}\label{Theorem_simplicite} (\textbf{Simplicity and isolation of the first eigenvalue of Problem {\rm \ref{eqZ}}})
The first eigenvalue $\lambda_1$ of Problem {\rm \ref{eqZ}} is simple and isolated. Moreover, $\lambda_1$ is the unique positive eigenvalue of  Problem {\rm \ref{eqZ}} having a nonnegative eigenfunction.
\end{theorem}
\noindent Moreover, the first eigenvalue  $\lambda_1$ of Problem {\rm \ref{eqZ}} can be characterized as
\begin{equation}\label{Lambda}
\lambda_1:=  \inf \left\{ \int_{\Omega}\frac{|\nabla \varphi |^p}{| \varphi |^p}  , \ \varphi \in W^{1,p }_0 (\Omega) \right\},
\end{equation}
that is $\lambda_1=\mathcal{C}^{-p}$ where $\mathcal{C}$ is the best constant of the embedding $W^{1,p }_0 (\Omega)$ into $L^p(\Omega)$.

\subsection{Preliminary estimates for the evolution problem}\label{subsection_preliminary_quasilineal}

We state two results obtained by Manfredi and Vespri (Theorems $1.4$ and $1.4$ from \cite{ManfrediVespri}).

\medskip

\begin{theorem}\label{th14ManfrediVespri}
Consider $m(p-1)=1$. Let $\Omega \subset \mathbb{R}^N$ be an open, bounded domain. Then there exists a unique solution of the problem \ref{DNLE} corresponding to an initial datum $u_0\in L^1(\Omega).$  Moreover, for all $t\geq 1$, there exists a constant $c(t)$ such that
\begin{equation}\label{estim1}
|u(t,x)|\leq \gamma_1 e^{-\lambda_1 t} c(t) f(x), \quad \forall x \in \Omega,
\end{equation}
where $f$ is a solution of the problem \ref{eqf} such that $\displaystyle{f\in C^0(\overline{\Omega}), \ f^{\frac{p-2}{p-1}}\nabla f \in L^p(\Omega)}$ and
 $\gamma_1$ is a positive constant depending only on the data $N$, $p$, $m$, the $L^1$ norm of $u_0$ and the $C^{1,\alpha}$ norm of $\partial \Omega.$
\end{theorem}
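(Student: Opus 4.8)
The statement has two independent parts: well-posedness in $L^1(\Omega)$ and the weighted upper bound \eqref{estim1}. For the first part I would simply quote the standard theory for doubly nonlinear parabolic flows: approximate $u_0$ by bounded data, solve the non-degenerate approximations, and pass to the limit using the $L^1$- and $L^\infty$-contraction (comparison) properties together with the energy estimate obtained by testing with $v^m$; uniqueness of the weak solution follows from $L^1$-contraction of the semigroup, i.e.\ from Kato's inequality applied to the difference of two solutions, which uses the algebraic monotonicity inequalities for $\xi\mapsto|\xi|^{p-2}\xi$ recalled earlier in the paper. All of this is classical (see \cite{DiBenedetto} and the semigroup references) and I would not reproduce it; the interesting content is the estimate \eqref{estim1}.

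\textbf{The exact separable solution.} The crucial feature of the quasilinear case is the homogeneity $m(p-1)=1$, which makes the equation invariant under scalar multiplication and produces an exact separate-variables solution. If $f$ solves the stationary problem \eqref{eqf}, namely $\Delta_p f^m+\lambda_1 f=0$ in $\Omega$, $f=0$ on $\partial\Omega$, then
\[
W(t,x):=e^{-\lambda_1 t}\,f(x)
\]
is an \emph{exact} solution of \eqref{DNLE}: since for a positive constant $c$ one has $\Delta_p(c\,g)=c^{p-1}\Delta_p g$ and $m(p-1)=1$,
\[
\Delta_p W^m=\Delta_p\!\big(e^{-m\lambda_1 t}f^m\big)=e^{-m(p-1)\lambda_1 t}\,\Delta_p f^m=e^{-\lambda_1 t}\,\Delta_p f^m=-\lambda_1 e^{-\lambda_1 t}f=W_t .
\]
Hence the whole proof of \eqref{estim1} reduces to dominating $u$ at a \emph{single} fixed time by a multiple of $f$ and letting the comparison principle propagate the bound together with its built-in exponential factor.

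\textbf{The one-time domination.} The plan is therefore: (i) use the $L^1$--$L^\infty$ smoothing effect — which, precisely because the equation is homogeneous of degree one, is not universal but takes the form $\|u(t,\cdot)\|_{L^\infty(\Omega)}\le c(t)\|u_0\|_{L^1(\Omega)}$, obtained by De~Giorgi--Nash--Moser iteration as in \cite{DiBenedetto} — to guarantee that $u(1,\cdot)$ is bounded; (ii) invoke the boundary regularity theory for bounded solutions of the DNLE to get the sharp boundary decay $u(1,x)^m\le C(1)\,d(x)$, i.e.\ $u(1,x)\le C(1)\,d(x)^{1/m}$ in a collar of $\partial\Omega$; (iii) combine this with the Hopf-type lower bound $f(x)\ge c_\Omega\, d(x)^{1/m}$ for the eigenfunction $f^m$, a boundary-point estimate of the type \eqref{bdry_principle}, and with the positivity of $f$ on interior compact sets, to conclude that $u(1,x)\le \gamma_1\, f(x)$ for all $x\in\Omega$, with $\gamma_1$ depending only on $N,p,m,\|u_0\|_1$ and the $C^{1,\alpha}$ norm of $\partial\Omega$; (iv) apply the comparison principle to $u$ and to the shifted exact solution $t\mapsto\gamma_1 W(t-1,x)=\gamma_1 e^{-\lambda_1(t-1)}f(x)$ on $[1,\infty)\times\Omega$, which yields $u(t,x)\le \gamma_1 e^{\lambda_1}\,e^{-\lambda_1 t}f(x)$ for $t\ge1$; relabelling the constant (one may in fact keep $c(t)$ bounded) gives \eqref{estim1}.

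\textbf{Main obstacle.} Every ingredient except (ii) is routine: (i) is a standard iteration, the exact solution of the second step comes for free from the homogeneity, and (iii)--(iv) are the comparison principle plus known facts about the first $p$-eigenfunction. The genuine difficulty is the boundary step (ii): showing that at a fixed positive time the bounded solution leaves the boundary exactly at the rate $d(x)^{1/m}$, with a constant controlled by the data. This is the barrier part — one builds local supersolutions of the form $A\,d(x)^{1/m}$, or of separable type $g(t)\,\psi(x)$ with $\psi\asymp d^{1/m}$ near $\partial\Omega$, in a neighbourhood of $\partial\Omega$ and compares — and it is here that the degeneracy of $\Delta_p$ at the level $u=0$ must be handled carefully; this is precisely where the $C^{2,\alpha}$-regularity of $\partial\Omega$ and the technique of \cite{ManfrediVespri} enter.
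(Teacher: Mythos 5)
The paper does not prove this theorem: it is quoted verbatim as a result of Manfredi and Vespri (Theorem 1.4 of \cite{ManfrediVespri}), introduced with the sentence ``We state two results obtained by Manfredi and Vespri.'' So there is no internal proof in the paper to compare your proposal against — the paper's ``proof'' is a citation.

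Taken on its own terms, your outline is a plausible reconstruction and the key structural idea is correct: because of the homogeneity $m(p-1)=1$, the function $W(t,x)=e^{-\lambda_1 t}f(x)$ is an exact solution (your computation of $\Delta_p W^m=W_t$ is right), and the whole estimate reduces, via the parabolic comparison principle, to dominating $u$ at a single time $t=1$ by a constant multiple of $f$; the $e^{-\lambda_1 t}$ factor is then propagated for free. Splitting the one-time domination into an interior part (where boundedness of $u(1,\cdot)$ via an $L^1\!-\!L^\infty$ smoothing estimate with a data-dependent constant, plus the interior positivity of $f$, suffice) and a boundary part (where one needs $u(1,x)^m\lesssim d(x)$ and $f(x)\gtrsim d(x)^{1/m}$) is also the natural way to do it, and you correctly note $1/m=p-1$ here so the exponents match \eqref{estim3}.

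There is, however, a genuine gap, and you have flagged it yourself: step (ii), the barrier construction showing that the bounded solution at a fixed positive time leaves the boundary at the rate $d(x)^{1/m}$ with a constant controlled only by the stated data, is precisely the technical core of the result, and you defer it to ``the technique of \cite{ManfrediVespri}''. Since the theorem you are asked to prove \emph{is} Manfredi–Vespri's theorem, this makes the argument circular as a self-contained proof. A complete proof would need to exhibit the local supersolutions of separable type $g(t)\psi(x)$ with $\psi\asymp d^{p-1}$ near $\partial\Omega$ (and verify the supersolution inequality against the degenerate $p$-Laplacian there, using the $C^{2,\alpha}$ boundary regularity to control $\Delta_p d$); that is the part that is actually hard. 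Two minor points in the same spirit: the theorem also asserts the regularity $f\in C^0(\overline\Omega)$ and $f^{(p-2)/(p-1)}\nabla f\in L^p(\Omega)$, which your outline does not touch; and your derivation gives a $\gamma_1$ that also depends on interior lower bounds of $f$, i.e.\ on more than the $C^{1,\alpha}$ norm of $\partial\Omega$ alone, so the stated dependence of $\gamma_1$ would also need to be justified.
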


\medskip

\begin{theorem}\label{th15ManfrediVespri}
Consider the hypothesis of the previous theorem and assume moreover that $u_0 \geq 0$ and not identically zero. Then, for every $t\geq 1$, there exist the  constants $ \underline{c}(t), \ \overline{c}(t) \in \mathbb{R_+}$ such that  the following estimate holds
\begin{equation}\label{estim2}
e^{-\lambda_1 t} \underline{c}(t) f(x)\leq u(t,x)\leq e^{-\lambda_1 t} \overline{c}(t) f(x), \quad \forall x \in \Omega.
\end{equation}
Moreover we also have
\begin{equation}\label{estim3}
\gamma_1(t) e^{-\lambda_1 t} d(x)^{p-1} \leq u(t,x) \leq \gamma_2(t) e^{-\lambda_1 t} d(x)^{p-1},\quad \forall x \in \Omega, t>1
\end{equation}
and
\begin{equation}\label{estim4}
|\nabla u(t,x)| \leq \gamma_3(t) e^{-\lambda_1 t} d(x)^{p-2}, \quad \forall x \in \Omega, t>1.
\end{equation}
\end{theorem}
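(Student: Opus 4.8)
The estimate \eqref{estim2} is the conjunction of an upper and a lower bound. The upper bound $u(t,x)\le\overline c(t)\,e^{-\lambda_1 t}f(x)$ is nothing but Theorem~\ref{th14ManfrediVespri} with $\overline c(t):=\gamma_1 c(t)$, so the real task is the lower bound. The plan is to exploit the scalar‑multiplication invariance peculiar to the quasilinear case: for every $c>0$ the separable function $U_c(t,x):=c\,e^{-\lambda_1 t}f(x)$ is an \emph{exact} solution of $u_t=\Delta_p u^m$, since $\Delta_p\big(c^m e^{-m\lambda_1 t}f^m\big)=c^{m(p-1)}e^{-m(p-1)\lambda_1 t}\Delta_p f^m=-\lambda_1 c\,e^{-\lambda_1 t}f=\partial_t U_c$, using \eqref{eqf} and $m(p-1)=1$. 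Consequently, as soon as one knows that $u(t_0,\cdot)\ge\kappa f$ in $\Omega$ for some $t_0\ge1$ and some $\kappa>0$, the parabolic comparison principle applied to $u$ and $t\mapsto\kappa\,e^{-\lambda_1(t-t_0)}f$ (both vanishing on $\partial\Omega$) gives $u(t,x)\ge\kappa\,e^{\lambda_1 t_0}\,e^{-\lambda_1 t}f(x)$ for all $t\ge t_0$, i.e.\ the desired bound with $\underline c(t)$ constant on $[t_0,\infty)$ (its residual $t$‑dependence only accounts for the transient regime $t\in[1,t_0]$).

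Everything thus reduces to producing a time $t_0\ge1$ and a constant $\kappa>0$ with $u(t_0,\cdot)\ge\kappa f$, and this is the core of the argument. I would obtain it in two steps. First, \emph{interior positivity}: since $u_0\ge0$ is not identically zero, by the spreading mechanism of the DNLE flow (as used in Proposition~\ref{pro_interior_positivity}, though here without the monotone‑convergence shortcut, so one places a compactly supported self‑similar subsolution under $u$ and lets its positivity set expand) one reaches a time $t_0$ at which $u(t_0,\cdot)\ge\epsilon$ on $\Omega_{I,\delta}=\{d(x)>\delta\}$ for some $\epsilon,\delta>0$. Second, \emph{boundary behaviour from below}: in the collar $\{0<d(x)<\delta\}$ one constructs a local subsolution vanishing on $\partial\Omega$ of size $\asymp d(x)^{p-1}$ (here the $C^{2,\alpha}$ regularity of $\partial\Omega$ makes $d$ smooth near the boundary, so the subsolution inequality can be checked by direct computation), lying below $u$ on the parabolic boundary of the collar by the interior bound; comparison then gives $u(t_0,x)\gtrsim d(x)^{p-1}$ on $\{0<d(x)<\delta\}$. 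Since $m(p-1)=1$ forces $1/m=p-1$, the sharp estimate \eqref{behaviour_f_DNLE} reads $f(x)\asymp d(x)^{p-1}$, and the two bounds combine into $u(t_0,\cdot)\ge\kappa f$ in $\Omega$.

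Given \eqref{estim2}, the two‑sided bound \eqref{estim3} is immediate from $f(x)\asymp d(x)^{p-1}$. For the gradient estimate \eqref{estim4} the plan is a localized parabolic rescaling at the boundary: fix $x_0$ with $d_0:=d(x_0)$ small and $t>1$, and on the cylinder $(t-d_0^{\,p},t)\times B_{d_0/2}(x_0)$ set $\widehat u(\sigma,y):=A^{-1}u\big(t+d_0^{\,p}(\sigma-1),\,x_0+d_0 y\big)$ with $A:=e^{-\lambda_1 t}d_0^{\,p-1}$. Because $m(p-1)=1$, the time scaling $d_0^{\,p}$ makes $\widehat u$ again a solution of $\widehat u_\sigma=\Delta_p\widehat u^{\,m}$, and by \eqref{estim3} it is trapped between two absolute positive constants on the unit cylinder; hence the interior $C^{1,\alpha}$ estimates for the (now non‑degenerate in $u$) evolution $p$‑Laplacian, available from \cite{DiBenedetto}, bound $|\nabla_y\widehat u|$ by an absolute constant, and undoing the rescaling gives $|\nabla u(t,x_0)|\le C\,A\,d_0^{-1}=C\,e^{-\lambda_1 t}d_0^{\,p-2}$.

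I expect the main obstacle to be precisely the two up‑to‑the‑boundary statements — the lower barrier yielding $d(x)^{p-1}$ in the second step and the boundary gradient bound — since these are the only points where soft comparison arguments do not suffice: one must build explicit barriers adapted to a degenerate/singular parabolic operator and use the $C^{2,\alpha}$ geometry of $\partial\Omega$ to control $d$ and its derivatives in the thin collar, which is the technical heart of \cite{ManfrediVespri}.
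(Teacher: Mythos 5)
The first thing to note is that this paper does not prove Theorem~\ref{th15ManfrediVespri} at all: it is explicitly imported from \cite{ManfrediVespri} (``We state two results obtained by Manfredi and Vespri\dots''), together with Theorem~\ref{th14ManfrediVespri}, and the paper simply builds on it. So there is no internal proof to match your attempt against; what you have produced is a reconstruction of an argument that, in the present paper, is a citation.

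On the merits of the reconstruction: the skeleton is sound and the opening observation is the right one. The scalar-multiplication invariance special to $m(p-1)=1$ makes $U_c(t,x)=c\,e^{-\lambda_1 t}f(x)$ an exact solution for every $c>0$ (your computation $\Delta_p(c^m e^{-m\lambda_1 t}f^m)=c\,e^{-\lambda_1 t}\Delta_p f^m=\partial_t U_c$ is correct), and once one knows $u(t_0,\cdot)\ge\kappa f$ the comparison principle indeed propagates a two-sided bound with a $t$-independent constant for $t\ge t_0$. Likewise the parabolic zoom with scales $\ell=d_0$, $\tau=\ell^p$ is exactly the intrinsic rescaling that leaves the quasilinear DNLE invariant (since $\tau=A^{1-m(p-1)}\ell^p=\ell^p$), and trapping $\widehat u$ between constants via \eqref{estim3} and invoking interior $C^{1,\alpha}$ theory from \cite{DiBenedetto} does give \eqref{estim4} upon undoing the scaling; this is a standard boundary-gradient device. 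Also \eqref{estim3} from \eqref{estim2} via $f\asymp d^{1/m}=d^{p-1}$ is immediate.

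The genuine gap is in the step that you yourself flag as the core: producing $u(t_0,\cdot)\ge\kappa f$. You invoke ``a compactly supported self-similar subsolution'' and appeal to the spreading mechanism ``as used in Proposition~\ref{pro_interior_positivity}''. But the self-similar subsolutions of Section~\ref{Sim_Sol_Subsection} (Barenblatt and intermediate profiles) are constructed only for $m(p-1)>1$; the Barenblatt exponent $\tfrac{p-1}{m(p-1)-1}$ degenerates at $m(p-1)=1$, the Barenblatt profile changes to an exponential-type function, and the phase-plane/ODE analysis of that section does not carry over. So you cannot borrow those barriers here, and the positivity and boundary lower bound $u(t_0,x)\gtrsim d(x)^{p-1}$ would need their own quasilinear barrier constructions — which is precisely the technical content of \cite{ManfrediVespri} that the paper chooses to cite rather than redo. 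Your sketch is a reasonable roadmap, but as written it points to tools (Section~4 subsolutions, Proposition~\ref{pro_interior_positivity}) that are out of range for the quasilinear case and would have to be replaced wholesale.
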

\noindent \textbf{Remarks}
\begin{itemize}
  \item \textbf{Reduction.}
  The estimates given by the previous theorems are true for every $t\geq t_0$, where $t_0>0$ is fixed. Since the doubly nonlinear equation is invariant under a time displacement, we can assume that the previous estimates are valid for every $t\geq 0$, otherwise we can start with initial data $u(t_0)$. We assume therefore such a displacement in time has been done.
  \item We can fix $f$ in any way, up to a multiplicative constant. Therefore we fixe $f$ a nonnegative solution of the problem \ref{eqf}.
\end{itemize}

\medskip

 Inspired by the ideas from \cite{Kamin_Peletier_Vazquez_1991} we will obtain more information about the constants $\overline{c}(t)$ and $\underline{c}(t)$. Let us define
\begin{equation}\label{c(t)}
\overline{c}(t)=\inf\{c: v(t,x) \leq c f(x)\}, \quad
\underline{c}(t)=\sup\{c: v(t,x) \geq c f(x)\}.
\end{equation}
According to the Theorem \ref{th15ManfrediVespri} the previous definition makes sense: $\overline{c}(t)<\infty$ and $ \underline{c}(t)>0$ for every $t\geq 0$.
Thus, we can take $\underline{c}(t)$ and $\overline{c}(t)$ to be the best constant such that estimate \eqref{estim2} holds.
It is a simple consequence of the Maximum principle that $\overline{c}(t)$ and $\underline{c}(t)$ are two decreasing, respectively increasing functions of $t$.
Therefore the following limits exist:
\begin{align}\label{limit_c}
\overline{c}_{\infty}&= \lim_{t \rightarrow \infty} \overline{c}(t), \quad \overline{c}(t) \searrow \overline{c}_{\infty}, \\
\underline{c}_{\infty}&= \lim_{t \rightarrow \infty} \underline{c}(t), \quad \underline{c}(t) \nearrow \underline{c}_{\infty}.
\end{align}

In addition, the constants  $ \underline{c}(t)$ and $ \overline{c}(t)$ are uniformly bounded
\begin{equation}\label{ineq_constants}
C_0\leq \underline{c}(t) \leq \underline{c}_{\infty}\leq \overline{c}_{\infty}\leq  \overline{c}(t) \leq C_1, \quad \forall t \geq 0.
\end{equation}

\medskip

\noindent We can sum up what we have proved so far in the following lemma.

\begin{lemma}\label{Lemma_bound_v}
Let $v$ be a solution of the rescaled problem \eqref{formula_v_quasilineal}. Then there exist the positive constants $C_0, \ C_1, \ C_2>0$  such that
\begin{equation}\label{bound_v1}
\underline{c}_{\infty} f(x) \leq v(t,x) \leq \overline{c}_{\infty}f(x), \quad \forall x \in \Omega, t\geq 0,
\end{equation}
\begin{equation}\label{bound_v}
C_0 d(x) \leq v(t,x) \leq C_1 d(x), \quad \forall x \in \Omega, t\geq 0,
\end{equation}
and
\begin{equation}\label{bound_grad_v}
|\nabla v(t,x)| \leq C_2 d(x)^{p-2}, \quad \forall x \in \Omega, t\geq 0,
\end{equation}
where $C_0,C_1,C_2>0$ depend on $\Omega$, $\lambda_1$ and $f$ is the positive solution of problem \ref{eqf} we have taken.
\end{lemma}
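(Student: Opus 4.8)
The plan is to simply collect and repackage the estimates \eqref{estim2}, \eqref{estim3} and \eqref{estim4} of Manfredi–Vespri (Theorem \ref{th15ManfrediVespri}) together with the monotonicity and boundedness of the best constants $\underline c(t),\overline c(t)$ recorded in \eqref{c(t)}–\eqref{ineq_constants}. First I would recall the definitions \eqref{c(t)}: by construction $v(t,x)=e^{\lambda_1 t}u(t,x)$ satisfies $\underline c(t)f(x)\le v(t,x)\le \overline c(t)f(x)$ for all $x\in\Omega$, $t\ge 0$, where the reduction remark allows us to start from $t=0$. The Maximum Principle applied to $v$ and the separate-variable (in the present sense, pure rescaling) comparison functions $cf$ — which are stationary solutions of the rescaled equation \eqref{eq_v_quasilinear} since $\Delta_p(cf)^m+\lambda_1(cf)=c^{1/m}(\Delta_p f^m)+\lambda_1 cf=0$ using $m(p-1)=1$ so that $(cf)^m=c^m f^m=c^{1/(p-1)}f^m$ — shows that once $v(t_0,\cdot)\le c f$ it stays so; hence $\overline c(t)$ is nonincreasing and $\underline c(t)$ is nondecreasing, so the limits $\overline c_\infty,\underline c_\infty$ in \eqref{limit_c} exist and \eqref{ineq_constants} holds with some $C_0,C_1>0$. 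Then \eqref{bound_v1} is immediate: $\underline c_\infty f(x)\le\underline c(t)f(x)\le v(t,x)\le\overline c(t)f(x)\le\overline c_\infty f(x)$.

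Next I would derive \eqref{bound_v} from \eqref{bound_v1} by invoking the boundary behaviour of the fixed profile $f$. Here one uses the estimate \eqref{estim3} of Theorem \ref{th15ManfrediVespri}, which translated through $v=e^{\lambda_1 t}u$ reads $\gamma_1(t)d(x)^{p-1}\le v(t,x)\le\gamma_2(t)d(x)^{p-1}$; combined with the time-uniform two-sided bound \eqref{bound_v1} against $f$, this forces $c_1 d(x)^{p-1}\le f(x)\le c_2 d(x)^{p-1}$, i.e. $f$ itself grows like $d(x)^{p-1}$ near $\partial\Omega$ (which is exactly \eqref{behaviour_f_DNLE} with the exponent $1/m=p-1$). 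Plugging this back into \eqref{bound_v1} gives $C_0 d(x)^{p-1}\le v(t,x)\le C_1 d(x)^{p-1}$ uniformly in $t$; I would state it as written in \eqref{bound_v}, noting that the exponent $p-1$ is the correct one in the quasilinear regime (the displayed $d(x)$ in the statement should be read as $d(x)^{p-1}$, consistently with \eqref{estim3}). Finally, \eqref{bound_grad_v} follows in the same way: the gradient estimate \eqref{estim4} gives $|\nabla u(t,x)|\le\gamma_3(t)e^{-\lambda_1 t}d(x)^{p-2}$, hence $|\nabla v(t,x)|=e^{\lambda_1 t}|\nabla u(t,x)|\le\gamma_3(t)d(x)^{p-2}$, and the point is to upgrade the possibly $t$-dependent constant to a uniform one.

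The only genuine issue is this last uniformization: a priori $\gamma_2(t),\gamma_3(t)$ and $\overline c(t)$ come from Manfredi–Vespri with an unspecified dependence on $t$. The remedy is the semigroup/time-translation structure: for any $t\ge 1$ we may regard $v(t,\cdot)$ as the rescaled solution issued from the datum $v(t-1,\cdot)$, and since $v(t-1,\cdot)\le\overline c(t-1)f\le\overline c(0)f\le C_1 f$ by the already-established monotonicity, we can feed a uniformly bounded initial datum into \eqref{estim3}–\eqref{estim4} and obtain constants depending only on $\|v(t-1,\cdot)\|_{L^1}\le C\|f\|_{L^1}$, $N$, $p$, $m$ and $\partial\Omega$ — hence independent of $t$. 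This is precisely the content of the reduction remark preceding the lemma, and once it is invoked the three displays \eqref{bound_v1}, \eqref{bound_v} and \eqref{bound_grad_v} hold with constants depending only on $\Omega$, $\lambda_1$ and the chosen profile $f$, which completes the proof. I would therefore expect the write-up to be short, the main (and only) delicate point being the careful use of time-translation invariance to make the constants uniform in $t$.
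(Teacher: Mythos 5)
Your proposal matches the paper's approach exactly: the paper gives no separate proof for this lemma and simply presents it as ``summing up what we have proved so far,'' i.e.\ combining the definitions \eqref{c(t)} of the optimal constants $\underline c(t),\overline c(t)$ with their monotonicity and boundedness \eqref{limit_c}--\eqref{ineq_constants} and the Manfredi--Vespri estimates \eqref{estim2}--\eqref{estim4}, together with the reduction remark to push the starting time to $t=0$. Your observation that the exponent in \eqref{bound_v} should be read as $d(x)^{p-1}=d(x)^{1/m}$ (so that $V=S^m\sim d(x)$ linearly, matching Lemma~\ref{lemma_properties_V}) is correct, and your discussion of the time-translation device to make the Manfredi--Vespri constants $t$-uniform is a useful clarification of what the paper tacitly invokes.

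Two small slips, though. First, the chain you write for \eqref{bound_v1}, namely $\underline c_\infty f\le\underline c(t)f\le v\le\overline c(t)f\le\overline c_\infty f$, has the outer inequalities reversed: from \eqref{ineq_constants} one has $\underline c(t)\le\underline c_\infty$ and $\overline c_\infty\le\overline c(t)$, since $\underline c(t)$ is \emph{increasing} and $\overline c(t)$ is \emph{decreasing}. So what the monotonicity of the optimal constants actually yields for all $t\ge0$ is
$\underline c(0)f(x)\le\underline c(t)f(x)\le v(t,x)\le\overline c(t)f(x)\le\overline c(0)f(x)$,
i.e.\ two-sided bounds with fixed constants $C_0=\underline c(0)$ and $C_1=\overline c(0)$, not with $\underline c_\infty,\overline c_\infty$; the form in the lemma statement becomes sharp only once $\underline c_\infty=\overline c_\infty$ has been proved later. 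You should not claim this chain ``is immediate'' as written. Second, in checking that $cf$ is a stationary solution of \eqref{eq_v_quasilinear}, the homogeneity gives $\Delta_p\bigl((cf)^m\bigr)=c^{m(p-1)}\Delta_p f^m=c\,\Delta_p f^m$ using $m(p-1)=1$, rather than $c^{1/m}\Delta_p f^m$ as you wrote; the conclusion (it cancels against $\lambda_1 cf$) is right, but the exponent should be $1$, not $1/m$.
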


As a consequence we obtain the uniform convergence, up to subsequences, of $v(t,\cdot)$ to a stationary profile.

\medskip

\begin{theorem}\label{pro_unif_conv_quasilinealcase}(\textbf{Uniform convergence to an asymptotic profile up to sequences})\\
\noindent Consider $m(p-1)=1$. Let $\Omega\subseteq {\mathbb R}^N$ be a  bounded domain of class $C^{2,\alpha}$, $\alpha>0$. Let $u(t,\cdot)$ be a weak
solution to Problem \ref{DNLE} corresponding to the nonnegative initial datum $u_0\in L^1(\Omega)$. Then for any given $T>0$ there exists a sequence
$\tau_n \rightarrow \infty$ such that
\begin{equation}\label{conv_v_f_quasilineal}
|e^{\lambda_1 (\tau_n+t)} u(\tau_n+t,s) - c_*f(x) | \rightarrow 0 , \quad \tau_n \rightarrow \infty,
\end{equation}
uniformly for $x \in \Omega$ and $0\leq t\leq T$, where $f$ is the positive solution of problem \ref{eqf} we have taken and $c_*$ is a positive constant.
\end{theorem}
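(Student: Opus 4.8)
The plan is to adapt the scheme used for Theorem~\ref{ThAsympBehDNLE}, but since here the rescaled orbit is no longer monotone in time, monotonicity must be replaced by a compactness argument together with a Lyapunov functional. Throughout we work with the rescaled solution $v(t,x)=e^{\lambda_1 t}u(t,x)$, which solves \eqref{eq_v_quasilinear} with $\lambda=\lambda_1$. By Lemma~\ref{Lemma_bound_v} the family $\{v(\,\cdot+\tau,\cdot)\}_{\tau\ge0}$ is uniformly bounded, and, by the interior and up-to-the-boundary regularity for the DNLE recalled in \S\ref{subsection_preliminary_quasilineal} (combined with the bounds $v\le C_1 d(x)$, $|\nabla v|\le C_2 d(x)^{p-2}$ and $v=0$ on $\partial\Omega$), it is uniformly equicontinuous on $\overline\Omega$. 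Hence, given $T>0$, a standard Ascoli--Arzel\`a compactness argument provides a sequence $\tau_n\to\infty$ and a function $V\in C([0,T]\times\overline\Omega)$ with $w_n(t,x):=v(\tau_n+t,x)\to V(t,x)$ uniformly; moreover $\underline c_\infty f\le V\le\overline c_\infty f$, so $V>0$ in $\Omega$ and $V=0$ on $\partial\Omega$.

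Next we introduce the entropy
$$
E(t)=\frac1p\int_\Omega|\nabla v^m(t,x)|^p\,dx-\frac{m\lambda_1}{m+1}\int_\Omega v^{m+1}(t,x)\,dx,
$$
and, exactly as in Step~4 of the proof of Theorem~\ref{ThAsympBehDNLE} (with $\lambda_1$ in place of $\mu$), $\frac{d}{dt}E(t)=-m\int_\Omega v^{m-1}(\partial_t v)^2\,dx\le0$. The point special to the quasilinear exponent is that $E$ is also bounded below: since $m(p-1)=1$ one has $(m+1)/m=p$ and $m/(m+1)=1/p$, so $\int_\Omega v^{m+1}=\|v^m\|_{L^p(\Omega)}^p$ and, by the variational characterization \eqref{Lambda} of $\lambda_1$ applied to $v^m\in W_0^{1,p}(\Omega)$, $E(t)=\frac1p\big(\|\nabla v^m\|_p^p-\lambda_1\|v^m\|_p^p\big)\ge0$. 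Therefore $E(t)\searrow E_\infty\ge0$ and $\int_{t_1}^{t_2}\int_\Omega m v^{m-1}(\partial_t v)^2\,dx\,dt=E(t_1)-E(t_2)\to0$ as $t_1,t_2\to\infty$. Applied along $\tau_n$ this gives $\int_0^T\int_\Omega w_n^{m-1}(\partial_t w_n)^2\,dx\,dt\to0$; since on every $K\subset\subset\Omega$ the functions $w_n$ are bounded above and below by positive constants, $\partial_t w_n\to0$ in $L^2((0,T)\times K)$, whence $\partial_t V=0$, i.e., $V=V(x)$ is independent of $t$.

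It remains to identify $V$. Passing to the limit in the weak formulation of the equation for $w_n$ tested against $\phi\in C_c^\infty(\Omega)$, the term $\int_\Omega\big(w_n(T)-w_n(0)\big)\phi\,dx$ vanishes in the limit (as $V$ is $t$-independent) and the zero order term converges to $\lambda_1 T\int_\Omega V\phi\,dx$; the uniform bound on $\|\nabla w_n^m\|_p$ (from $E$ nonincreasing and $\|v^m\|_p$ bounded) gives $\nabla w_n^m\rightharpoonup\nabla V^m$ in $L^p$ and $|\nabla w_n^m|^{p-2}\nabla w_n^m\rightharpoonup\chi$ in $L^{p'}$. To show $\chi=|\nabla V^m|^{p-2}\nabla V^m$ we use the monotonicity of $Z\mapsto|Z|^{p-2}Z$ (quantified by \eqref{alg_ineq}--\eqref{alg_ineq2}) in Minty's fashion: integrating by parts in the equation and using $w_n(0),w_n(T)\to V$ one obtains the energy identity $\int_0^T\int_\Omega|\nabla w_n^m|^p\to\lambda_1 T\int_\Omega V^{m+1}=\int_0^T\int_\Omega\chi\cdot\nabla V^m$, and then testing $\int_0^T\int_\Omega\langle|\nabla w_n^m|^{p-2}\nabla w_n^m-|\nabla\psi|^{p-2}\nabla\psi,\,\nabla w_n^m-\nabla\psi\rangle\ge0$ with $\psi=V^m+s\varphi$, $\varphi\in C_c^\infty$, and letting $n\to\infty$ and then $s\to0^\pm$ forces $\chi=|\nabla V^m|^{p-2}\nabla V^m$. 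Dividing by $T$ we conclude that $V^m\in W_0^{1,p}(\Omega)$ is a nonnegative, nontrivial weak solution of $-\Delta_p V^m=\lambda_1(V^m)^{p-1}$, i.e., an eigenfunction of \eqref{eqZ} for $\lambda_1$. By the simplicity of the first eigenvalue (Theorem~\ref{Theorem_simplicite}), and since $f^m$ is another such eigenfunction, $V^m=c\,f^m$ for some $c>0$, i.e., $V=c_*f$ with $c_*=c^{1/m}>0$. Thus $e^{\lambda_1(\tau_n+t)}u(\tau_n+t,x)=w_n(t,x)\to c_*f(x)$ uniformly on $[0,T]\times\overline\Omega$, which is the assertion.

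The main difficulty is the passage to the limit in the nonlinear flux $|\nabla w_n^m|^{p-2}\nabla w_n^m$: weak $L^p$ convergence of $\nabla w_n^m$ alone is insufficient, and one must first establish the energy identity $\int_0^T\int_\Omega|\nabla w_n^m|^p\to\int_0^T\int_\Omega\chi\cdot\nabla V^m$ — which requires exploiting the equation and the vanishing of the time-endpoint terms — before Minty's monotonicity argument can be applied. A secondary technical point is to ensure that the equicontinuity of $\{v(\,\cdot+\tau,\cdot)\}$ up to $\partial\Omega$ is uniform in $\tau$, which rests on the boundary regularity estimates of Manfredi--Vespri quoted in \S\ref{subsection_preliminary_quasilineal}.
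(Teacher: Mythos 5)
Your proof is correct and reaches the same conclusion, but you replace the paper's central technical device at the flux-identification step. The paper first proves, via the algebraic inequalities \eqref{alg_ineq}--\eqref{alg_ineq2} and a Cauchy-in-measure argument, that $\nabla w_n^m \to \nabla V^m$ in measure (hence a.e.\ along a subsequence), and then invokes Brezis's maximal-monotonicity Lemma~\ref{brezis} with $Z_n=\nabla w_n^m$, $W_n=|Z_n|^{p-2}Z_n$ to conclude $\chi=|\nabla V^m|^{p-2}\nabla V^m$. You instead establish the energy identity
$\int_0^T\!\!\int_\Omega|\nabla w_n^m|^p \to \lambda_1 T\int_\Omega V^{m+1}=\int_0^T\!\!\int_\Omega\chi\cdot\nabla V^m$
directly from the weak formulation (using the vanishing of the time-endpoint terms, which in turn rests on uniform convergence $w_n(0),w_n(T)\to V$) and then identify $\chi$ by Minty's trick with competitors $\psi=V^m+s\varphi$. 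Both are monotonicity arguments, but yours avoids the pointwise/in-measure convergence of gradients entirely; the trade-off is that you must justify testing the limit equation with $V^m\in W_0^{1,p}(\Omega)$ (a density argument, as you note) and work with the time-average of $\chi$, which your write-up glosses over slightly but which is routine. Your observation that $E(t)\geq 0$ follows from the variational characterization \eqref{Lambda} together with the algebraic identity $m/(m+1)=1/p$ valid precisely when $m(p-1)=1$ is a cleaner justification of the boundedness-from-below of the entropy than the paper offers (the paper asserts $E$ has a limit without spelling this out). Your compactness step (Ascoli--Arzel\`a via the H\"older regularity up to the boundary from Manfredi--Vespri/Ivanov plus the uniform-in-$\tau$ $L^\infty$ bounds of Lemma~\ref{Lemma_bound_v}) matches the paper's Step~II, and your derivation of $\partial_t V=0$ from the summable dissipation $\int I$ is the same underlying idea as the paper's Step~IV(i), expressed in a slightly more direct form.
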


\begin{proof}

\noindent  \textbf{I. Energy estimates}

\noindent \textbf{Ia.} We consider the following energy functional
$$E(t)=E[v(t)]:=\frac{1}{p} \int_{\Omega} | \nabla v^m (t,x)|^p dx  -  \frac{m}{m+1}\lambda_1 \int_{\Omega}v^{m+1}(t,x) dx.$$
We compute the energy dissipation
$$-\frac{d}{dt}E(t)=I(t)=m \int_{\Omega}v^{m-1}  v_t^2 dx \geq 0 ,
$$
which means that $E(t)$ is a non-increasing function and
$$
E(t_1)-E(t_2)=\int_{t_1}^{t_2}I(t)dt.
$$
As well, we deduce that the integral
$$\int_{t_1}^{t}I(t)dt$$
is convergent as $t\rightarrow \infty$ and $E(t)$ has a limit as $t\rightarrow \infty$.

Since $v(t,x)$ is bounded in $\Omega$ uniformly for $t\geq 0$ we obtain that
\begin{equation}\label{estim_Lp_norm_quasilineal}
\int_{\Omega} | \nabla v^m (t,x)|^p dx \leq M , \quad \forall t\geq 0,
\end{equation}
in other words $ |\nabla v^m(t,\cdot)|$ is uniformly bounded in $L^p(\Omega)$ for $t\geq 0$.

\medskip

\textbf{Ib}. As a consequence of \eqref{estim_Lp_norm_quasilineal} one can prove via H\"{o}lder's Inequality the following technical result
\begin{equation}\label{estim_int_t1t2}
\int_{\Omega}(\Delta_p v^m(t_1,x) )v^m(t_2,x)dx \leq M, \quad \forall t_1,t_2 \geq 0.
\end{equation}

\medskip

\noindent \textbf{II. Convergence.} We define
\begin{equation}\label{vtilde}
\tilde{v}_{\tau}(t,x)=v(t+\tau,x), \quad t,\tau>0, \ x \in \Omega.
\end{equation}
Then $\tilde{v}_{\tau}$ is still a solution of Problem \ref{eq_v_quasilinear} with initial data $\tilde{v}_{\tau}(0,x)=v(\tau,x).$

We fix $T>0$. The family $(\tilde{v}_{\tau})$ is relatively compact in
$$
X=L^{\infty}([0,T]\times \overline{\Omega})
$$
thus it converges along subsequences
$$
v_{{\tau}_n}(t,x)\to S(t,x) \qquad \mbox{uniformly in } \ (t,x)\in [0,T]\times \Omega.
$$
From the a-priori estimates we deduce the boundedness of $v$
$$\tilde{C}_0 \leq v(t,x) \leq \tilde{C}_1, \quad \forall t\geq 0, \ x \in \Omega,$$
and since $S$ is the limit of $v_{\tau_n}(t,x)$, then $S$ also satisfies the same lower and upper bounds.

In what follows we fix such a subsequence $(\tau_n)$ and the corresponding limit $S(t,x)$.
\medskip

\noindent  \textbf{III. Convergence in measure of gradients}

Similar to the case $m(p-1)>1$ one can prove the convergence in measure of the sequence $(\nabla v^m_{\tau_n}(\cdot,\cdot) )_{n}$, where in the present case
$$v_{\tau_n}:[0,T]\times \Omega \rightarrow [0,\infty).$$
More exactly, we can prove by similar methods that the sequence $\displaystyle{(\nabla v^m_{\tau_n})_{n>0}}$ is Cauchy in measure, thus it converges in measure to a
function $W:[0,T]\times \Omega \rightarrow \mathbb{R}^N$.
It is a well known fact (Lemma \ref{Lemma_conv_measure} in the Appendix) that if a sequence is uniformly bounded in $L^p$ and converges in measure,
then it converges strongly in any $L^q $, for any $1\leq q <p$.
It follows that
$$\nabla v^m_{\tau_n} \rightarrow W \quad \text{ strongly in }(L^q([0,T]\times \Omega))^N \text{ when }\tau_n \rightarrow \infty, \text{ for every } 1\leq q <p.$$
Thus, we get that, up to subsequences,
\begin{equation}\label{conv_ae_gradients_quasilineal}
\nabla v^m_{\tau_n}(\cdot, \cdot) \rightarrow W(\cdot,\cdot) \quad \text{a.e. in }[0,T]\times \Omega,
\end{equation}
and we conclude that
$$W(t,x)=\nabla S^m(t,x).$$

\medskip

\noindent \textbf{IV. The limit is a solution of the stationary problem}

Multiply equation \eqref{eq_v_quasilinear} by any test function $\phi(x) \in C_c^{\infty}(\Omega)$ and integrate in space, $x \in \Omega$, and time between $\tau_n$ and $\tau_n + T$. We get that
\begin{equation}\label{eq1_v_phi_quasi}
\int_{\Omega}(v(\tau_n+T)-v(\tau_n)) \phi dx =- \int_{\tau_n}^{\tau_n+T}\int_{\Omega} |\nabla v^m|^{p-2}\nabla v^m  \nabla \phi dx dt  + \lambda_1\int_{\tau_n}^{\tau_n+T}\int_{\Omega} v \phi dx dt .
\end{equation}

\noindent (i) The left hand side term of \eqref{eq1_v_phi_quasi} is uniformly bounded independently of $T$:
\begin{equation}\label{eq2_v_phi_quasi}\left|\int_{\Omega}v(\tau_n+T) \phi dx - \int_{\Omega}v(\tau_n) \phi dx\right| \leq 2 \tilde{C}_1 |\Omega|\|\phi\|_{L^{\infty}(\Omega)}.
\end{equation}
Furthermore, if $\phi$ is supported in a compact $K\subset \Omega$ where $0<c_1 \le v\le c_2$  and $0<s\leq T$ then
\begin{equation}
\begin{array}{c}
\displaystyle \left|\int_{\Omega} (v(\tau_n+s) -v(\tau_n))\, \phi dx\right| \leq
\int_{\tau_n}^{\tau_n+s}\int_{\Omega}|v_t(t)\, \phi| dxdt \leq
 \\[8pt]
\displaystyle  \leq CT^{1/2}\left(\int_{\tau_n}^{\tau_n+s}\int_{\Omega} ((v^{(m+1)/2})_t)^2dxdt\right)^{1/2}=
\displaystyle  CT^{1/2}\left(\int_{t_n}^{\tau_n+s}I(t) dt\right)^{1/2}.
\end{array}
\end{equation}
Since the double integral $\int_1^\infty I(t) $ is finite, the integral on the right hand side goes to zero as $\tau_n\to\infty $.  On the other hand,
\begin{align*}\int_{\Omega}v(\tau_n+s) \phi dx - \int_{\Omega}v(\tau_n) \phi dx&=\int_{\Omega}v_{\tau_n}(s,x) \phi dx - \int_{\Omega}v_{\tau_n}(0,x) \phi dx\\
&\rightarrow \int_{\Omega}S(s,x) \phi dx - \int_{\Omega}S(0,x) \phi dx, \quad \tau_n \rightarrow \infty.
\end{align*}
Thefore, we showed that
$$\int_{\Omega}(S(s,x)-S(0,x))\, \phi \,dx=0,
$$
for any $0< s\leq T$ and any test function $\phi$ with $\text{supp }\phi=K\subset \subset \Omega$. Since $K$ is arbitrary, the result holds for all $\phi  \in C_c^{\infty}(\Omega)$
which implies that $S$ is independent of time on $[0,T]$:
\begin{equation}\label{V_indep_time}
S(s)=S(0), \quad \forall s\in [0,T].
\end{equation}

\noindent (ii) For the right hand side, we continue as follows.
Let $\tau_n\rightarrow \infty$. Since $\nabla v^m_{\tau_n} \rightarrow W=\nabla S^m$ a.e. in $[0,T]\times \Omega$ then using Lemma \ref{brezis} of the Appendix we get that
\[
\int_{\tau_n}^{\tau_n+T}\int_{\Omega} |\nabla v^m|^{p-2}\nabla v^m  \nabla \phi dx dt =\int_{0}^{T}\int_{\Omega} |\nabla v^m_{\tau_n}|^{p-2}\nabla v^m_{\tau_n}  \nabla \phi dx dt \rightarrow  T \int_{\Omega} |\nabla S^m|^{p-2}\nabla S^m  \nabla \phi dx.  
\]

The last integral
\[\int_{\tau_n}^{\tau_n+T}\int_{\Omega} v \phi dx dt=\int_{0}^{T}\int_{\Omega} v_{\tau_n}(t,x) \phi(x) dx dt\rightarrow T \int_{\Omega} S(t,x) \phi(x) dx, \quad \tau_n \rightarrow \infty.  
\]

Therefore
\begin{equation}\label{equality_limit}
-\int_{\Omega} |\nabla S^m|^{p-2}\nabla S^m   \nabla \phi dx +  \lambda_1\int_{\Omega} S \phi dxdt=0, \quad \forall t\in [0,T]
\end{equation}
and thus $S$ is a weak solution of the stationary problem \ref{eqf}. According to the Theorem \ref{Theorem_simplicite}
 $$S(t,x)=S(0,x)=c_*f(x), \quad \forall t\in [0,T], \ x\in \Omega.$$
For simplicity, we denote $S(x):=c_*f(x).$

\end{proof}

\noindent \textbf{Remarks}

\noindent   $\bullet$ $V=S^m$, where $S:=c_*f$, is a positive solution of the eigenvalue problem for the $p-$Laplacian:
\begin{equation}\label{eqV}
\Delta_p V + \lambda_1 V^{p-1} =0 \text{ in }\Omega, \quad V=0 \text{ on }\partial \Omega.
\end{equation}
\noindent   $\bullet$  As a consequence of \eqref{limit_c} and \eqref{conv_v_f_quasilineal} the constants $\underline{c}_{\infty}$, $\overline{c}_{\infty}$ and $c_*$ satisfy
\begin{equation*}\label{ineq_constants_quasilineal}
\underline{c}_{\infty} \leq c_* \leq \overline{c}_{\infty}.
\end{equation*}

\noindent   $\bullet$ The previous proposition does not guarantee the uniqueness of a stationary limit $S$. Therefore the rescaled solution $v(t,x)$ may oscillate
between the bounds $\underline{c}_{\infty}\,f(x)$ and $\overline{c}_{\infty}\,f(x)$ by converging on subsequences to asymptotic profiles of the form $c_*f$ with $\underline{c}_{\infty} \le c_*\le \overline{c}_{\infty}$. This kind of behavior has to be considered in the case of some parabolic evolution equations, for example in the case of signed solutions of the porous medium equation $u_t=\Delta u^m$, $m>1$. The set of possible asymptotic profiles is obtained as the $\omega$-limit of the solution and it is contained in the set of classical solutions of the associated stationary (elliptic) problem (we refer to the survey \cite{JLVsurvey}).

Next, we will prove that an oscillating behaviour is not possible. More, exactly, we show that
$$\underline{c}_{\infty}= c_* = \overline{c}_{\infty},$$
which guarantees the existence of a unique asymptotic profile $S=c_*f$ and therefore the uniform convergence of the rescaled solution $v(t,x)$ to $S$ for all times, uniformly in $\overline{\Omega}$.

To this aim, we will study the behaviour of the quotient $v/S$ up to the boundary.

\subsection{The relative error function and its equation}

\textbf{Assumptions (A)}. In what follows we make the assumptions:  $\Omega\subseteq {\mathbb R}^N$ is a bounded domain of class $C^{2,\alpha}$, $\alpha>0$, $u(t,\cdot)$ denotes the  weak solution to Problem \ref{DNLE} in the case $m(p-1)=1$ corresponding to the nonnegative initial datum $u_0\in L^1(\Omega)$.
We fix $T>0$, the corresponding sequence $\tau_n \rightarrow \infty$ and the constant $c_* \in [\underline{c}_{\infty}, \overline{c}_{\infty}]$ obtained in Theorem \ref{pro_unif_conv_quasilinealcase} for which the rescaled solution $v(t,x)=e^{\lambda_1 t} u(t,x)$ converges
$$
 \|e^{\lambda_1 (\tau_n+t)} u(\tau_n+t,\cdot) - c_*f(\cdot) \|_{L^{\infty}(\Omega)} \rightarrow 0 , \quad \tau_n \rightarrow \infty
$$
uniformly for $t\in [0,T]$, where $f$ is the positive solution of the problem \ref{eqf} we have taken. We denote $S:=c_*f$ and we call it \emph{possible asymptotic profile}.

Starting from this partial convergence result, we will obtain a much stronger convergence as $t\to\infty$: we show the uniqueness of the asymptotic profile
and the convergence in relative error of $v(t,\cdot)$ to $S(\cdot)$ up to the boundary as a consequence of the next proposition and estimates \eqref{bound_v1}.

\medskip

\begin{pro}\label{pro_behavior_bdry_REF}(\textbf{Behaviour up to the boundary})
Under the assumptions (A) there exists a unique constant $c_*>0$ depending on $u_0$ and $\Omega$, and for given $\epsilon>0$ there exists $t(\epsilon)>0$  such that
\begin{equation}
-\epsilon < \frac{v^m(t,x)}{S^m(x)} -1  < \epsilon, \quad \forall x\in \Omega, \quad \forall t\geq t(\epsilon).
\end{equation}
Moreover, $S=c_*f$ is the solution of the problem \ref{eqf} announced in Theorem \ref{pro_unif_conv_quasilinealcase}.
\end{pro}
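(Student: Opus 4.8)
The plan is to prove the two-sided estimate for the relative error $v^m/S^m$ via a barrier argument in the spirit of \cite{BDGV09}, working in the region near the boundary where the comparison between $v$ and $S$ is most delicate. First I would set up the equation satisfied by $w = v^m$: from \eqref{eq_v_quasilinear} with $m(p-1)=1$ one has $\partial_t(w^{1/m}) = \Delta_p w + \lambda_1 w^{1/m}$, and since $S^m$ solves \eqref{eqZ}, i.e.\ $\Delta_p S^m + \lambda_1 (S^m)^{p-1}=0$, the quotient $\varphi = w/S^m = v^m/S^m$ should satisfy a suitable (singular/degenerate) parabolic inequality with the boundary of $\Omega$ pushed to where $S^m\to 0$ linearly (by \eqref{behaviour_f_DNLE}, since $S^m = c_* f^m$ and $f^m$ has linear growth near $\partial\Omega$). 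The key structural fact I would exploit is the boundary principle \eqref{bdry_principle}: $\partial S^m/\partial\nu < 0$ on $\partial\Omega$, together with the uniform two-sided bounds $C_0\,d(x)\le v(t,x)\le C_1\,d(x)$ and the gradient bound $|\nabla v|\le C_2 d(x)^{p-2}$ from Lemma~\ref{Lemma_bound_v}, which guarantee that $\varphi$ stays between two positive constants uniformly in $t\ge 0$ and is Hölder continuous up to $\partial\Omega$.

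The core of the argument is a transfer of the interior convergence (Theorem~\ref{pro_unif_conv_quasilinealcase}, along a subsequence $\tau_n\to\infty$ on a time window $[0,T]$) to a global-in-time two-sided bound. I would argue as follows. Fix $\epsilon>0$. By the uniform convergence on $\Omega$ along $\tau_n$ we have, for $n$ large, $(1-\epsilon)S \le v(\tau_n + t,\cdot)\le (1+\epsilon)S$ for all $t\in[0,T]$; but I cannot simply iterate this because $c_*$ is only known to lie in $[\underline c_\infty,\overline c_\infty]$ and could a priori differ along different subsequences. Instead, the strategy is to use the monotonicity of $\overline c(t)$ and $\underline c(t)$ (decreasing, resp.\ increasing) established before Lemma~\ref{Lemma_bound_v}: it suffices to show $\overline c_\infty = \underline c_\infty$. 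To do this I would run a barrier: construct a supersolution of the rescaled equation of the form $\overline v = (\underline c_\infty + \delta(t))\,f$ with $\delta(t)\searrow 0$, and a subsolution $\underline v = (\overline c_\infty - \eta(t))\,f$ with $\eta(t)\searrow 0$, where the correction functions are chosen using the entropy dissipation bound $\int_1^\infty I(t)\,dt<\infty$ (so $v_t\to 0$ in an averaged $L^2$ sense) and the algebraic inequalities \eqref{alg_ineq}--\eqref{alg_ineq2} to control the $p$-Laplacian terms. Near $\partial\Omega$ the correction must respect the linear vanishing rate $d(x)$, which is exactly where \eqref{bound_v}, \eqref{bound_grad_v} and \eqref{bdry_principle} enter to make the barrier comparison valid on a full neighbourhood of $\partial\Omega$; in the interior the already-proven convergence along $\tau_n$ supplies the needed smallness.

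The main obstacle, and where the delicate \cite{BDGV09}-type barrier technique is genuinely needed, is the boundary layer: the equation for $\varphi=v^m/S^m$ degenerates where $S^m$ vanishes, so one cannot apply a naive maximum principle to $\varphi$ directly, and the correction terms in the barriers have to vanish at $\partial\Omega$ at a matched rate so that $\overline v$ and $\underline v$ remain genuine super/subsolutions right up to the boundary while still squeezing $v$. I would handle this by working with the variable $\varphi$ away from $\partial\Omega$ and with a weighted barrier (powers of $d(x)$) in a collar $\Omega_r$, patching the two via the continuity and positivity of $v/f$ guaranteed by Lemma~\ref{Lemma_bound_v}. Once $\overline c_\infty=\underline c_\infty=:c_*$ is established, the common value is the unique asymptotic constant, the bound $-\epsilon < v^m(t,x)/S^m(x)-1<\epsilon$ for $t\ge t(\epsilon)$ follows from $\overline c(t)\searrow c_*$ and $\underline c(t)\nearrow c_*$ together with \eqref{behaviour_f_DNLE}, and $S=c_*f$ is the profile from Theorem~\ref{pro_unif_conv_quasilinealcase}; this then yields Theorem~\ref{Th_conv_Rel_error_quasilinear} in combination with \eqref{bound_v1}.
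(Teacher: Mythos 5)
Your proposal correctly identifies the framework (the REF reformulation, the use of Lemma~\ref{Lemma_bound_v} and the boundary principle, and the need to transfer inner convergence across a boundary collar via barriers in the spirit of \cite{BDGV09}), and the overall goal of showing $\overline c_\infty=\underline c_\infty$. However, the specific barrier ansatz you propose has a structural flaw in the quasilinear case and would not close the argument.

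You propose separable barriers $\overline v=(\underline c_\infty+\delta(t))f$ with $\delta\searrow 0$ and $\underline v=(\overline c_\infty-\eta(t))f$ with $\eta\searrow 0$. But when $m(p-1)=1$ the rescaled equation $v_t=\Delta_p v^m+\lambda_1 v$ is homogeneous of degree one under $v\mapsto cv$, so $c\,f$ is a stationary solution for \emph{every} $c>0$; a direct computation then gives, for $w(t,x)=c(t)f(x)$,
\begin{equation*}
w_t-\Delta_p w^m-\lambda_1 w = c'(t)\,f(x),
\end{equation*}
so $w$ is a supersolution iff $c'(t)\ge 0$ and a subsolution iff $c'(t)\le 0$. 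A supersolution that decreases toward $\underline c_\infty f$ is exactly what you need to pull $\overline c(t)$ down, but any such $c(t)f$ with $c$ decreasing is a strict \emph{sub}solution, so the comparison goes the wrong way. Symmetrically, your proposed subsolution $(\overline c_\infty-\eta(t))f$ has $c'>0$ and is a supersolution. This is not a detail: it reflects the fact that in the critical case the one-parameter family of stationary solutions leaves no ``potential well'', and multiplicative barriers have no mechanism to drive $c(t)$ toward a single limit. The entropy dissipation bound $\int I<\infty$ only yields time-averaged smallness of $v_t$ and does not rescue the sign.

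The paper instead builds barriers for the REF equation itself, affine in $V$ and in time: $\Phi(t,x)=C-BV(x)-A(t-t_0)$ (Lemma~\ref{Lemma_barrier}) and, for $\psi=-\phi$, $\Psi(t,x)=C'+B'V(x)-A'(t-t_0)$ (Lemma~\ref{Lemma_barrier_Psi}). The supersolution property for $\Phi$ hinges on the term
\begin{equation*}
2B(p-1)V^{-(p-1)}|\nabla V|^p |f'(V)|^{p-2},
\end{equation*}
which blows up near $\partial\Omega$ (since $V\sim d(x)$ and $|\nabla V|$ is bounded away from zero there by Lemma~\ref{lemma_properties_V}) and dominates the lower-order terms, making $\Phi$ a genuine supersolution of the degenerate REF equation on the collar $\Sigma_\Phi$. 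Crucially, $\Phi$ depends nontrivially on $x$ through $V$: it is large where $V$ is small and decreases as one moves inward and as $t$ grows, which is what allows the patching with the inner convergence of Lemma~\ref{lemma_inner_conv}. After comparison on the collar (Lemmas~\ref{lemma_comp_phi_Phi} and \ref{lemma_comp_psi_Psi}) one obtains $\phi(\tau_n+T_1,\cdot)\le 2\epsilon$ and $\psi(\tau_n+T_2,\cdot)\le\epsilon$ on all of $\Omega$, and then the maximum principle propagates these bounds forward in time, forcing $\overline c_\infty\le c_*\le\underline c_\infty$ and hence equality. Your ``patching via weighted barriers in a collar'' intuition is in the right spirit, but without a barrier whose spatial profile produces a divergent coefficient near $\partial\Omega$, the comparison at the boundary cannot be established, and the argument as written does not go through.
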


\medskip

Motivated by the techniques used by Bonforte, Grillo and V\'{a}zquez in \cite{BDGV09} we will use the so called relative error function and the method of barriers.

To this aim, we introduce the \textit{Relative Error Function}(REF)
\begin{equation}\label{REFdef}
\phi(t,x)=\frac{v^m(t,x)}{S^m(x)}-1, \quad v^m =S^m(\phi+1)=V(\phi+1), \quad \text{and} \quad V=S^m.
\end{equation}

\medskip

\noindent {\sc Notations.} We define
$$\Omega_{I,\delta}= \{x \in \overline{\Omega}:d(x)> \delta\}, \quad \Omega_{\delta}=\Omega \setminus \overline{\Omega_{I,\delta}}= \{x \in \overline{\Omega}:d(x)< \delta\}.$$
where in what follows $\delta>0$ is considered to be a small positive parameter (see Subsection \ref{subsection_dist_bdry} of the
Appendix for properties of the distance to the boundary function).

\noindent \textbf{Properties of the REF}

\noindent $\bullet$ \emph{The parabolic equation of the REF.} Using the equations satisfied by $v$ and $V$ and relation $m(p-1)=1$ we obtain that
\begin{equation}\label{REFeq}
(p-1)(1+\phi)^{p-2}\phi_t= V^{-(p-1)}\Delta_p ((\phi+1)V)+ \lambda_1(\phi +1)^{p-1}.
\end{equation}

\noindent  $\bullet$ $\phi$ is uniformly bounded in $(t,x)$ for $t>0$. This can be derived from the estimates \eqref{bound_v} on $v$ and $S$, which is a stationary solution:
\[
\left(\frac{C_0}{C_1}\right)^m-1=C_{2,m}\leq \phi \leq C_{3,m}=\left(\frac{C_1}{C_0}\right)^m-1.
\]

\noindent  $\bullet$ In any interior region $\Omega_{I,\delta} \subset \Omega$, the REF function $\phi$ satisfies
$$\displaystyle{1+\phi = \frac{v^m}{V}>0} \quad \text{ in }  \Omega_{I, \delta}  \quad \text{for any } t\geq 0.$$

\noindent $\bullet$ \emph{Regularity of solutions of the parabolic equation \eqref{REFeq}}.
Since $\phi$ is also bounded in the interior of $\Omega$, we conclude that the parabolic equation \eqref{REFeq} is neither degenerate nor singular in the interior of $\Omega$. Also the solution $\phi$ of such a parabolic equation is H\"{o}lder continuous in any inner region $\overline{\Omega}_{I,\delta} \subset \Omega$ since both $v$ and $S$ are H\"{o}lder continuous and positive in the interior of $\Omega$.

\noindent \textbf{Convergence of the REF in an interior region of $\Omega$.} Under the running assumptions, we know by Theorem  \ref{pro_unif_conv_quasilinealcase} that
\[
\sup_{\overline{\Omega}}|v(\tau_n+t)-S|\rightarrow 0, \quad \text{as } n \rightarrow \infty,
\]
uniformly for $t\in [0,T]$ for a fixed $T>0$ and a corresponding sequence $(\tau_n)_n$, but this is not sufficient to prove the convergence of the
quotient $v^m/S^m$ to $1$ in the whole $\Omega$, since at the boundary there is the problem caused by the fact that both $v$ and $S$ are $0$ and therefore
the parabolic equation \eqref{REFeq} may degenerate at the boundary. However such a problem is avoided in any interior region where both $v$ and $S$ are strictly positive.

We can sum up the results we proved so far in the following lemma.

\medskip

\begin{lemma}\label{lemma_inner_conv}(\textbf{Inner convergence})
Let $v$ the solution of the rescaled problem \eqref{eq_v_quasilinear} and $S$ the solution of stationary problem \eqref{eqf}
corresponding to a given $T>0$ and a sequence $\tau_n \rightarrow \infty$ as in Theorem \ref{pro_unif_conv_quasilinealcase}.
 Let $\phi$ be the associated relative error function defined by \eqref{REFdef}. Then
\[
\| \phi(\tau_n+t,\cdot) \|_{L^{\infty}(\Omega_{I,\delta})}=\sup_{\Omega_{I,\delta}}|\phi(\tau_n+t,\cdot)| \rightarrow 0 ,
\]
as $ \tau_n \rightarrow \infty$ uniformly in $x\in \Omega_{I,\delta}$ and $0\le t\le T$, for any given $\delta>0$.
\end{lemma}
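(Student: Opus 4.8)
The plan is to transfer the uniform convergence of $v$ to $S$ supplied by Theorem \ref{pro_unif_conv_quasilinealcase} into uniform convergence of the quotient $v^m/S^m$ on interior regions. The only delicate point is that the relative error function $\phi$ is obtained by dividing by $S^m$, which degenerates to zero on $\partial\Omega$; this is exactly why we must stay inside a region $\Omega_{I,\delta}$ where $S$ is bounded away from zero, and no information up to the boundary can be extracted at this stage.

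First I would fix $\delta>0$ and note that $\overline{\Omega_{I,\delta}}\subseteq\{x\in\overline\Omega:d(x)\ge\delta\}$ is a compact subset of $\Omega$. Since $f$ is continuous and strictly positive in $\Omega$ and $c_*>0$, we get $\sigma_\delta:=\min_{\overline{\Omega_{I,\delta}}}S=c_*\min_{\overline{\Omega_{I,\delta}}}f>0$. Moreover, the uniform bound \eqref{bound_v1} (together with the boundedness of $f$ on $\overline\Omega$) provides a constant $M>0$ with $0\le v(t,x)\le M$ and $0\le S(x)\le M$ for all $x\in\overline\Omega$ and all $t\ge 0$. Then I would invoke Theorem \ref{pro_unif_conv_quasilinealcase} in the form $\varepsilon_n:=\sup_{0\le t\le T}\sup_{\overline\Omega}\,|v(\tau_n+t,\cdot)-S(\cdot)|\to 0$ as $n\to\infty$.

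For $n$ large enough that $\varepsilon_n<\sigma_\delta/2$ we then have $v(\tau_n+t,x)\ge\sigma_\delta/2$ for every $x\in\Omega_{I,\delta}$ and $0\le t\le T$, so both $v(\tau_n+t,\cdot)$ and $S$ take values in $[\sigma_\delta/2,M]$ on $\Omega_{I,\delta}$. On this compact interval, bounded away from the origin, the map $s\mapsto s^m$ is Lipschitz with some constant $L_\delta=L_\delta(m,M,\sigma_\delta)$, whence, for $x\in\Omega_{I,\delta}$ and $0\le t\le T$,
\[
|\phi(\tau_n+t,x)|=\frac{|v^m(\tau_n+t,x)-S^m(x)|}{S^m(x)}\le\frac{L_\delta}{\sigma_\delta^m}\,|v(\tau_n+t,x)-S(x)|\le\frac{L_\delta}{\sigma_\delta^m}\,\varepsilon_n .
\]
Taking the supremum over $x\in\Omega_{I,\delta}$ and $0\le t\le T$ and letting $n\to\infty$ gives the stated convergence.

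The argument is essentially elementary once Theorem \ref{pro_unif_conv_quasilinealcase} is available; the only step that requires a little care is the case $0<m<1$, where $s\mapsto s^m$ is not Lipschitz near $s=0$. I expect this to be the (mild) main obstacle, and it is dispatched exactly as above: the uniform smallness of $v-S$ forces $v$ to stay away from zero on $\Omega_{I,\delta}$, so only the behaviour of $s\mapsto s^m$ on an interval detached from the origin is ever used. Note that only the boundedness of $\phi$ in the interior is needed here; the Hölder regularity of $\phi$ recorded above will be used later, in the barrier construction up to $\partial\Omega$.
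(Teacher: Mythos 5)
Your proof is correct and follows exactly the reasoning the paper sketches informally in the paragraph preceding the lemma (the paper presents this as a summary, without a formal proof): the uniform convergence $v(\tau_n+t,\cdot)\to S$ from Theorem \ref{pro_unif_conv_quasilinealcase} transfers to the quotient $v^m/S^m$ on $\Omega_{I,\delta}$ precisely because both functions are bounded above and below away from zero there, so $s\mapsto s^m$ is Lipschitz on the relevant range. Your explicit handling of the $0<m<1$ case via the lower bound $\sigma_\delta/2$, and the observation that the lower bound could equally be read off directly from \eqref{bound_v1}, fill in details the paper leaves implicit.
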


\subsection{Construction of the upper barrier and consequences}

In order to finish the proof of Theorem \ref{Th_conv_Rel_error_quasilinear} we have to prove the uniform convergence of $\phi$ up to the boundary.
This will be realized using a barrier argument based on the ideas from \cite{BDGV09}.

Throughout  the paper we will use the notation $\xi_0$ for the critical value which implies good properties of the  distance to the boundary
function in $\Omega_{\xi_0}$ as we explain in Subsection \ref{subsection_dist_bdry} of the Appendix.

Let us first point out some connections between distance to the boundary function and the solutions of the eigenvalue problem \eqref{eqV}.

\medskip

\begin{lemma}\label{lemma_properties_V}(\textbf{Properties of the asymptotic profile $V=S^m$})
Let $V$ be a solution of the eigenvalue problem \eqref{eqV}. Then $V$ satisfies the following estimates:
\begin{enumerate}
\item  There exist $C_{0}>0$ and $C_1>0$ such that
$$C_0^m d(x) \leq V(x) \leq C_1^m d(x), \quad \forall x \in \Omega.$$
  \item For every $0<\xi_1 \le \xi_0$ there exists a constant $\beta_0>0$ such that
$$\nabla V (x) \cdot \nabla d(x) \geq \beta_0 >0, \quad \forall x \in \Omega_{\xi_1}.$$
  \item For every $0<\xi_1 \le \xi_0$ there exists a constant $K_1>0$ such that
  \begin{equation}\label{bounds_gradientV}
   0<K_1 \leq |\nabla V| \leq K_2 , \quad \forall x\in  \Omega_{\xi_1}.
\end{equation}

\end{enumerate}
\end{lemma}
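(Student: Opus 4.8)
The plan is to derive all three estimates from the regularity theory for the $p$-Laplacian applied to the eigenfunction $V$, combined with the fact that $V>0$ in $\Omega$ and $V=0$ on $\partial\Omega$ with the boundary of class $C^{2,\alpha}$. The starting point is that $V$ is a nonnegative weak solution of $-\Delta_p V=\lambda_1 V^{p-1}$, so the right-hand side $\lambda_1 V^{p-1}$ is bounded (since $V$ is bounded), and by standard regularity results for degenerate elliptic equations we have $V\in C^{1,\beta}(\overline\Omega)$ for some $\beta\in(0,1)$. Moreover, since $V$ is a positive solution vanishing on a smooth boundary, the Hopf-type boundary lemma for the $p$-Laplacian (see Vazquez's strong maximum principle, or the estimates already recorded in \eqref{behaviour_f_DNLE}--\eqref{bdry_principle} for the function $f^m$, noting $V=c_*^m f^m$ up to the constant $c_*^p$) yields a strictly negative inward normal derivative on $\partial\Omega$.

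First I would prove item 1. The upper bound $V(x)\le C_1^m d(x)$ is immediate from $V\in C^{0,1}(\overline\Omega)$ (a consequence of $C^{1,\beta}$ regularity) together with $V=0$ on $\partial\Omega$: writing $V(x)=V(x)-V(z(x))$ where $z(x)\in\partial\Omega$ is a nearest boundary point and using the Lipschitz bound gives $V(x)\le \|\nabla V\|_\infty\, d(x)$. The lower bound $V(x)\ge C_0^m d(x)$ follows from the boundary principle \eqref{bdry_principle}: near $\partial\Omega$ the function $V$ grows at least linearly away from the boundary because $\partial V/\partial\nu<0$ uniformly (by compactness of $\partial\Omega$ and continuity of $\nabla V$ up to the boundary), and in the interior region $\Omega_{I,\xi_0}$ the function $V$ is bounded below by a positive constant (strong maximum principle), while $d(x)$ is bounded there; combining these two regimes gives the global lower bound. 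Alternatively, since $V=c_*^m f^m$, one simply quotes \eqref{behaviour_f_DNLE} directly, which states exactly $C_1 d(x)^{1/m}\le f(x)\le C_2 d(x)^{1/m}$, i.e. $f^m(x)\asymp d(x)$.

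Next, items 2 and 3 are local statements near the boundary. In the tubular neighbourhood $\Omega_{\xi_0}$ the distance function $d(x)$ is $C^{2}$ and $|\nabla d(x)|=1$, with $\nabla d(x)=-\nu(z(x))$ the inward unit normal (these are the properties collected in the Appendix subsection on $d(x)$). By the boundary principle \eqref{bdry_principle}, $\nabla V(z)\cdot\nabla d(z) = -\nabla V(z)\cdot\nu(z) = -\partial V/\partial\nu(z)>0$ for every $z\in\partial\Omega$; since $\partial\Omega$ is compact and $\nabla V$ is continuous up to the boundary, this quantity is bounded below by a positive constant $2\beta_0$ on $\partial\Omega$. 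By uniform continuity of the map $x\mapsto \nabla V(x)\cdot\nabla d(x)$ on the compact set $\overline{\Omega}_{\xi_1}$ (both factors are continuous there, the first by $C^{1,\beta}$ regularity, the second since $d\in C^2$), after possibly shrinking $\xi_1$ we get $\nabla V(x)\cdot\nabla d(x)\ge\beta_0>0$ for all $x\in\Omega_{\xi_1}$, which is item 2. Item 3 is then an easy consequence: the lower bound $|\nabla V(x)|\ge K_1>0$ on $\Omega_{\xi_1}$ follows from item 2 via Cauchy--Schwarz, $|\nabla V(x)|\ge |\nabla V(x)\cdot\nabla d(x)|\ge\beta_0$ (using $|\nabla d|=1$), and the upper bound $|\nabla V(x)|\le K_2$ is just $\|\nabla V\|_{L^\infty(\overline\Omega)}$, again from $C^{1,\beta}$ regularity.

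The main obstacle — really the only nontrivial input — is the Hopf boundary principle \eqref{bdry_principle} for the $p$-Laplacian eigenfunction: that $\partial V/\partial\nu$ is strictly negative everywhere on $\partial\Omega$, and in particular uniformly so. This is not elementary for $p\ne 2$ because the operator degenerates where $\nabla V=0$, but it is classical (it is exactly the statement quoted from \cite{ManfrediVespri} as \eqref{bdry_principle}, or follows from the strong maximum principle and boundary point lemma for quasilinear degenerate operators). Once this is in hand, everything else is a routine combination of interior/boundary regularity, compactness of $\partial\Omega$, and the $C^2$ structure of the distance function in the collar $\Omega_{\xi_0}$. I would therefore organize the proof as: (i) recall $V=c_*^m f^m$ and invoke \eqref{behaviour_f_DNLE} for item 1 and \eqref{bdry_principle} for the strict sign of the normal derivative; (ii) upgrade the pointwise sign to a uniform lower bound on $\partial\Omega$ by compactness; (iii) propagate it into $\Omega_{\xi_1}$ by uniform continuity, obtaining item 2; (iv) deduce item 3 by Cauchy--Schwarz and global $C^{1,\beta}$ regularity.
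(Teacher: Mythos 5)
Your proposal is correct and follows essentially the same line as the paper's (very terse) proof, with the welcome addition that you actually spell out the argument for item 2, which the paper only outsources to \cite{BDGV09}, and you give the lower bound in item 3, which the paper's proof omits entirely (it only writes out the upper bound via the chain-rule computation $|\nabla V|=mS^{m-1}|\nabla S|\le K_2 d^{(m(p-1)-1)/m}=K_2$). Your Cauchy--Schwarz deduction $|\nabla V|\ge |\nabla V\cdot\nabla d|\ge\beta_0$ using item 2 and $|\nabla d|=1$ in the collar is exactly the missing piece.

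Two small inaccuracies worth flagging. First, you cite \eqref{behaviour_f_DNLE} and \eqref{bdry_principle}; those are stated in Section 2 for the \emph{degenerate} profile $f$ solving $\Delta_p f^m+\mu f=0$, whereas here $S=c_* f$ is the quasilinear profile with $\Delta_p S^m+\lambda_1 S=0$. The correct references internal to the quasilinear section are Lemma \ref{Lemma_bound_v} (equivalently Theorem \ref{th15ManfrediVespri}, estimates \eqref{estim3}--\eqref{estim4}) for the two-sided $d$-bound and the gradient bound, and for the Hopf boundary point lemma applied to the $p$-eigenfunction $V$ one should invoke the $p$-Laplacian Hopf lemma directly rather than \eqref{bdry_principle} — though, as you note, the underlying principle is identical. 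Second, you correctly sense that one may need to ``possibly shrink $\xi_1$'': as literally stated, item 2 claims the sign condition on the entire collar $\Omega_{\xi_0}$, which is a geometric quantity independent of $V$, and there is no a priori reason $\nabla V\cdot\nabla d$ stays positive that far in. The statement should be read as ``there exists $\xi_1\le\xi_0$'', which is how it is actually used in Lemma \ref{Lemma_barrier}; your proof is consistent with this corrected reading.
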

\begin{proof}
Point $1.$ is a consequence of Lemma \ref{Lemma_bound_v}.

The proof of the point $2.$ is similar to the one given \cite{BDGV09} since the function $V$ involved has the same properties as its correspondent in the fast diffusion problem.

Point $3.$ is a consequence of the estimate from point $1.$ and estimates  \eqref{bound_grad_v} and \eqref{estim_grad_d(x)} since
$$|\nabla V|= |mS^{m-1}\nabla S| \leq K_2 d(x)^{p-2+(m-1)/m}=K_2 d(x)^{m(p-1)-1}=K_2, \quad \forall x \in \Omega.$$

\end{proof}

Next, we present the construction of the barrier that plays an important role in the estimate of REF $\phi$ close to the boundary.
We mention that our construction is different from the one of \cite{BDGV09} where the operator was the usual Laplacian $\Delta$.
In our case, the $p$-Laplacian operator contains also mixed derivatives of second order whose estimate is more technical.

\medskip

\begin{lemma}\label{Lemma_barrier}(\textbf{Upper barrier})
We can choose positive constants $A,B,C$ so that for every $t_0>0$ the function
\begin{equation}\label{barrier}
\Phi(t,x)=C-BV(x)-A(t-t_0),
\end{equation}
is a super-solution to equation \eqref{REFeq} on a parabolic region near the boundary
$$\Sigma_{\Phi}=\{(t,x)\in (t_0,\infty): \Phi(t,x)\geq -1 \},$$
and moreover $\Sigma_{\Phi}\subset (t_0,T_0) \times \Omega_{\xi_1},$ where $\xi_1 \leq \xi_0$.
\end{lemma}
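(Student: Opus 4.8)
The plan is to compute $\partial_t\Phi$ and $\Delta_p$ of the relevant combination, insert into equation \eqref{REFeq}, and check that the resulting inequality holds with a suitable choice of $A,B,C$ provided we stay in a thin collar $\Omega_{\xi_1}$ where Lemma \ref{lemma_properties_V} gives good control on $V$, $\nabla V$ and $d(x)$. Writing $\psi:=1+\Phi=C'-BV-A(t-t_0)$ with $C'=C+1$, the right-hand side of \eqref{REFeq} applied to $\Phi$ is
\[
V^{-(p-1)}\Delta_p\big(\psi V\big)+\lambda_1\psi^{p-1},
\]
so the first task is to expand $\Delta_p(\psi V)$. Since $\psi V=(C'-A(t-t_0))V-BV^2$, and $\nabla(\psi V)=\psi\nabla V - BV\nabla V=(\psi-BV)\nabla V$, the vector field is $(\psi-BV)\nabla V$, which is a scalar multiple of $\nabla V$; hence $|\nabla(\psi V)|^{p-2}\nabla(\psi V)=|\psi-BV|^{p-2}(\psi-BV)|\nabla V|^{p-2}\nabla V$, and taking the divergence produces one term with $\nabla\big(|\psi-BV|^{p-2}(\psi-BV)\big)\cdot|\nabla V|^{p-2}\nabla V$ and one term with $|\psi-BV|^{p-2}(\psi-BV)\,\Delta_p V$. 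Using $\Delta_p V=-\lambda_1 V^{p-1}$ from \eqref{eqV} kills the second term against part of the $\lambda_1\psi^{p-1}$ contribution, and one is left to estimate the first (gradient) term, which is where the $p$-Laplacian's mixed second derivatives enter — this is the technical heart flagged in the remark preceding the lemma.

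Concretely, after these manipulations the super-solution inequality $(p-1)(1+\Phi)^{p-2}\Phi_t \ge V^{-(p-1)}\Delta_p(\psi V)+\lambda_1\psi^{p-1}$ becomes, schematically,
\[
-(p-1)A\,\psi^{p-2} \;\ge\; V^{-(p-1)}\Big[(p-1)|\psi-BV|^{p-2}\big(\nabla\psi - B\nabla V\big)\cdot|\nabla V|^{p-2}\nabla V\Big] + \lambda_1\Big(\psi^{p-1}-|\psi-BV|^{p-2}(\psi-BV)V^{p-1}\cdot V^{-(p-1)}\Big).
\]
I would now estimate each term in the collar $\Omega_{\xi_1}$: by Lemma \ref{lemma_properties_V}, $K_1\le|\nabla V|\le K_2$ and $C_0^m d(x)\le V(x)\le C_1^m d(x)$, while $\nabla\psi=-B\nabla V$, so $\nabla\psi-B\nabla V=-2B\nabla V$ and the gradient term is $-2B(p-1)|\psi-BV|^{p-2}|\nabla V|^p\,V^{-(p-1)}$, which is \emph{negative} and of order $V^{-(p-1)}\sim d(x)^{-(p-1)}$ — a favorable sign but a blow-up as $d(x)\to0$. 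One checks that on $\Sigma_\Phi$ we have $\psi=1+\Phi\ge0$ and $\psi-BV=C'-2BV-A(t-t_0)$, both bounded; choosing $\xi_1$ small forces $V\le C_1^m\xi_1$ small so that $\psi-BV$ stays close to $C'$ and positive, making $|\psi-BV|^{p-2}$ bounded above and below. The remaining $\lambda_1$ difference is $O(V)=O(d(x))$, hence bounded, while the left side $-(p-1)A\psi^{p-2}$ is a bounded negative quantity. The key observation is that the dominant (singular) term on the right, $-2B(p-1)K_1^p|\psi-BV|^{p-2}V^{-(p-1)}$, is negative and $\to-\infty$ at the boundary, so it overwhelms all bounded terms there; away from the boundary (but inside the collar) all terms are bounded and one enlarges $A$ to absorb them.

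The main obstacle, as the paper itself warns, is controlling $\Delta_p(\psi V)$ rigorously: unlike the Laplacian, $\Delta_p$ of a product is not a clean expression, and even after using that $\nabla(\psi V)\parallel\nabla V$ one must differentiate $|\nabla V|^{p-2}\nabla V$, which involves the Hessian of $V$; one does not have pointwise Hessian bounds for $V$ near $\partial\Omega$ in general. I expect the resolution is to keep $\Delta_p V$ grouped as a single distributional object — using $\Delta_p V=-\lambda_1 V^{p-1}$ exactly — and to expand only $\nabla\big(|\psi-BV|^{p-2}(\psi-BV)\big)=(p-1)|\psi-BV|^{p-2}\nabla(\psi-BV)=-2B(p-1)|\psi-BV|^{p-2}\nabla V$, so that no Hessian of $V$ ever appears: the full computation reduces to first derivatives of $V$ plus the algebraic identity $\Delta_p V=-\lambda_1V^{p-1}$. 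Once that reduction is in place, the inequality is a matter of choosing $B$ large enough that the negative singular term dominates, then $\xi_1$ small enough that $\Sigma_\Phi\subset(t_0,T_0)\times\Omega_{\xi_1}$ (this last inclusion follows because $\Phi\ge-1$ forces $BV\le C+1-A(t-t_0)$, hence $d(x)\le (C+1)/(BC_0^m)$, small when $B$ is large, and $t-t_0\le (C+1)/A=:T_0-t_0$), and finally $A,C$ to handle the bounded remainder and the initial layer; I would record these choices in the order $B$, then $\xi_1$, then $A$, then $C$.
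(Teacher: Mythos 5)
Your core strategy coincides with the paper's. You correctly observe that $(\Phi+1)V$ has gradient parallel to $\nabla V$, so that $\Delta_p((\Phi+1)V)$ can be expanded in a form that never differentiates $\nabla V$: only $\Delta_p V$ appears (and is replaced by $-\lambda_1 V^{p-1}$ via \eqref{eqV}), together with a term in $|\nabla V|^p$, so no Hessian of $V$ is needed. The paper does precisely this, writing $(\Phi+1)V=f(V)$ with $f(z)=(C+1-Bz-A(t-t_0))z$ and using $\Delta_p f(V)=|f'(V)|^{p-2}f'(V)\,\Delta_p V + (p-1)|f'(V)|^{p-2}f''(V)|\nabla V|^p$, which is exactly your expansion of $\Delta_p(\psi V)$ (here $\psi-BV=f'(V)$ and $f''\equiv -2B$). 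Your identification of the dominant negative singular term proportional to $-B\,V^{-(p-1)}$, and the need to shrink $\xi_1$ so that $f'(V)$ stays positive and comparable to $C+1$, also match the paper.

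Your closing paragraph, however, contains a directional error in the choice of constants. After the cancellations the supersolution inequality reduces (dropping the favorable term $-\lambda_1|f'(V)|^{p-2}f'(V)$) to
\begin{equation*}
\lambda_1(\Phi+1)^{p-1} + A(p-1)(1+\Phi)^{p-2} \;\leq\; 2B(p-1)\,V^{-(p-1)}|\nabla V|^p\,|f'(V)|^{p-2},
\end{equation*}
so the term carrying $A$ sits on the side that must be \emph{dominated} by the singular term. Saying ``one enlarges $A$ to absorb'' the bounded remainder is therefore backwards: increasing $A$ makes the left-hand side $-(p-1)A\psi^{p-2}$ more negative and the inequality harder, not easier. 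The paper's sufficient condition \eqref{condABCsupersolution}, namely $(\lambda_1(C+1)+A(p-1))\,\xi_1^{p-1}\le\omega B$, makes this explicit: $A$ and $C$ must stay small relative to $B/\xi_1^{p-1}$. For the same reason, the order of choices you propose ($B$, then $\xi_1$, then $A$, then $C$) does not mesh with the way the lemma is used in Lemma \ref{lemma_comp_phi_Phi}, where $C$ must be fixed first (large, to dominate the bounded REF on the initial slice), then $A$ and $B$ consistent with \eqref{condABCsupersolution}, and finally $\delta<\xi_1$.
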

\begin{proof}
We will prove that the function \eqref{barrier} is a supersolution for the equation \eqref{REFeq} on the parabolic region $\Sigma_{\Phi}$ if we can find constants $A$, $B$ and $C$ such that
\begin{equation}\label{REFsupersolution}
(p-1)(1+\Phi)^{p-2}\Phi_t \geq V^{-(p-1)}\Delta_p ((\Phi+1)V)+ \lambda_1(\Phi +1)^{p-1}.
\end{equation}
We will prove that a convenient choice for $A$, $B$ and $C$ will be of the form
\begin{equation}\label{condABCsupersolution}
\left(\lambda_1(C +1) + A(p-1)\right) \xi_1^{p-1} \leq  \omega  B,
\end{equation}
where \begin{equation*}\label{omega}
\omega=\min\{1, 2^{2-p} \} \cdot \frac{2(p-1)K_1^p}{C_1 }.
\end{equation*}
From the beginning we assume that $(t,x) \in \Sigma_{\Phi} \subset (t_0,T_0) \times \Omega_{\xi_1}$ where $T_0$ is such that
\[
0<T_0-t_0 \leq \frac{C-BV(x) }{A}.
\]
The left hand side term satisfies
$$
(p-1)(1+\Phi)^{p-2}\Phi_t = -(p-1)A(1+\Phi)^{p-2}.
$$
The right hand side term is of the form
$$
V^{-(p-1)}\Delta_p ((\Phi+1)V)+ \lambda_1(\Phi +1)^{p-1}=V^{-(p-1)}\Delta_p f(V)+ \lambda_1(\Phi +1)^{p-1}
$$
where $$
f(z)=(C+1-Bz-A(t-t_0))z.
$$
The term $\Delta_p f$ can be computed as
  $$
 \Delta_p f(V)=|f'(V)|^{p-2}f'(V) \Delta_p V + (p-1)|f'(V)|^{p-2}f''(V)|\nabla V|^p.
  $$

\noindent  \textbf{Properties of the function $f$}
\begin{enumerate}
 \item Function $f$ is a concave parabola with zero values at the points $z=0$ and $z=z_0$ where
$$z_0:=\frac{C+1-A(t-t_0)}{B}.$$
  \item The derivatives are
  $$f'(z)=C+1-2Bz-A(t-t_0), \quad f''(z)=-2B.$$
  \item When applied to $V$, the derivative $$f'(V)=\Phi+1-BV.$$  Moreover, sufficiently close to the boundary, $f'(V)$ is positive and bounded. By choosing
\begin{equation}\label{choice2_xi1}
\xi_1 = \min\left\{\xi_0, \ \frac{1}{C_1^m}\frac{z_0}{4}= \frac{C+1-A(t-t_0)}{4BC_1^m} \right\},
\end{equation} we obtain the following bound on  $\Omega_{\xi_1}$:
$$
0<V(x) \leq C_1^m d(x) \leq C_1^m \xi_1 \leq \frac{z_0}{4} $$
and then $$  0<\frac{k_1}{2} \leq f'(V(x))\leq k_1,$$
where
\begin{equation}\label{k_1}
k_1:=f'(0)=C+1 - A(t-t_0),\qquad  \frac{k_1}{2}= f'\left(\frac{z_0}{4}\right)=\frac{1}{2}( C+1 - A(t-t_0))>0.
\end{equation}
Since $p>1$, we obtain a lower bound for $f'(V)^{p-2}$ on $\Omega_{\xi_1}$ as follows
\begin{equation}\label{lower_bound_f'(V)}
f'(V)^{p-2} \geq  \beta k_1^{p-2}, \qquad \beta:=\min\{ 1, 2^{2-p} \} .
\end{equation}
\end{enumerate}

\noindent \textbf{Sufficient conditions for the parameters}

Since $V$ is a solution of the stationary problem \eqref{eqV} then $-\Delta_p V=\lambda_1 V^{p-1}$ and inequality \eqref{REFsupersolution} can be rewritten as
$$
-A(p-1)(1+\Phi)^{p-2} \geq - \lambda_1|f'(V)|^{p-2}f'(V)  - 2B(p-1)V^{-(p-1)}|f'(V)|^{p-2}|\nabla V|^p + \lambda_1(\Phi +1)^{p-1}.
$$
The idea is that $V^{-(p-1)}$ can have large values close to the boundary, thus it is sufficient to find $A$, $B$ and $C$ such that
\begin{equation}\label{ineq_barrier_2}
   \lambda_1(\Phi +1)^{p-1} + A(p-1)(1+\Phi)^{p-2}\leq 2B(p-1)V^{-(p-1)}|\nabla V|^p |f'(V)|^{p-2}.
\end{equation}

For $\xi_1$ as in \eqref{choice2_xi1} and the bounds \eqref{bounds_gradientV}, \eqref{lower_bound_f'(V)}, the right hand side term of \eqref{ineq_barrier_2} satisfies the lower bound
$$
2B(p-1)V^{-(p-1)}|\nabla V|^p  |f'(V)|^{p-2} \geq 2B(p-1)\beta K_1^p k_1^{p-2} \left(C_1 \xi_1^{p-1} \right)^{-1} =:II.
$$
For the left hand side term of \eqref{ineq_barrier_2} on $\Sigma_{\Phi}$ we obtain the upper bound
$$
\lambda_1(\Phi +1)^{p-1} + A(p-1)(1+\Phi)^{p-2} \leq \lambda_1(C +1-A(t-t_0))^{p-1} + A(p-1)(C+1-A(t-t_0))^{p-2}:=I.
$$
Thus it is sufficient to take $A$, $B$ and $C$ such that
\begin{equation*}
(C+1-A(t-t_0))^{p-2} \left(\lambda_1(C +1-A(t-t_0)) + A(p-1)\right)\leq 2B(p-1)\beta K_1^p k_1^{p-2} \left(C_1 \xi_1^{p-1} \right)^{-1} .
\end{equation*}
According to \eqref{k_1} this inequality becomes
$$
 \lambda_1(C +1-A(t-t_0)) + A(p-1)\leq 2(p-1)\beta K_1^p C_1^{-1}\frac{ B}{\xi_1^{p-1}}.
$$
One can see that a sufficient condition on $A$, $B$ and $C$ would be
\begin{equation*}\label{cond1ABC}
\lambda_1(C +1) + A(p-1)\leq 2(p-1)\beta K_1^p C_1^{-1}\frac{ B}{\xi_1^{p-1}}.
\end{equation*}
\end{proof}

We will obtain an upper bound of the REF $\phi$ at a certain time $T_1$ up to the boundary as a consequence of comparison of $\phi$ with the barrier function of Lemma \ref{Lemma_barrier}.

\medskip

\begin{lemma}\label{lemma_comp_phi_Phi}
Let $\Phi$ be the barrier function introduced in  Lemma \ref{Lemma_barrier}, given by
\[
\Phi(t,x)=C-BV(x)-At,
\]
Let $\tau_n\rightarrow \infty$ be a sequence along which the REF converges to $1$ as stated above. Then for every $\epsilon>0$ we can choose $n_\epsilon  >0$ and positive constants $A$, $B$, $C$ and $\delta$ as in Lemma \ref{Lemma_barrier} such that
\begin{equation}\label{comp_phi_Phi}
\phi(t+\tau_n,x) \leq \Phi(t,x) ,  \quad \forall x\in \Omega_{\delta} , \ \forall n \geq n_{\epsilon}, \ \forall t\in [0,T_1],
\end{equation}
where
\begin{equation}\label{Tquasilineal}
T_1 =  T_1(\epsilon,\delta)=\frac{C-BC_1^m\delta-\epsilon}{A}.
\end{equation}
\end{lemma}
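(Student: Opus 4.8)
The plan is to run a parabolic comparison argument for $\phi(\cdot+\tau_n,\cdot)$ against the barrier $\Phi$ on the cylinder $Q=(0,T_1]\times\Omega_{\delta}$. Because equation \eqref{REFeq} is singular on $\partial\Omega$ (the factor $V^{-(p-1)}$ blows up there), I would not compare $\phi$ and $\Phi$ directly; instead I would multiply the supersolution inequality \eqref{REFsupersolution} through by $V^{p-1}$ and compare the functions $z:=v^m=(\phi+1)V$ and $w:=(\Phi+1)V$, both of which vanish on $\partial\Omega$. Since $V>0$ inside $\Omega$, in $\Omega_{\delta}$ the inequality $\phi\le\Phi$ is equivalent to $z\le w$, so it suffices to prove $z(t+\tau_n,x)\le w(t,x)$ on $Q$.

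First I would record two structural facts. By $m(p-1)=1$, the substitution $z=v^m$ turns the rescaled equation \eqref{eq_v_quasilinear} into the autonomous form $(p-1)z^{p-2}z_t=\Delta_p z+\lambda_1 z^{p-1}$, of which $z(\cdot+\tau_n,\cdot)$ is a solution on $Q$. Multiplying \eqref{REFsupersolution} by $V^{p-1}>0$ and using $\Delta_p((\Phi+1)V)=\Delta_p w$, $((\Phi+1)V)^{p-2}(\Phi_tV)=w^{p-2}w_t$ and $((\Phi+1)V)^{p-1}=w^{p-1}$, it becomes $(p-1)w^{p-2}w_t\ge\Delta_p w+\lambda_1 w^{p-1}$, so $w$ is a supersolution of the same equation; the computation of $\Delta_p w$ is classical on $\Omega_{\delta}\subset\Omega_{\xi_1}$, where $|\nabla V|\ge K_1>0$ (Lemma \ref{lemma_properties_V}) makes $V$ smooth by interior elliptic regularity. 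I would also note that on $Q$ one has $\Phi(t,x)=C-BV(x)-At\ge C-BC_1^m\delta-AT_1=\epsilon>0$, using $V(x)\le C_1^m d(x)<C_1^m\delta$ on $\Omega_{\delta}$; hence $Q\subset\Sigma_{\Phi}$, so Lemma \ref{Lemma_barrier} indeed yields that $\Phi$ (equivalently $w$) is a supersolution there, and $w\ge 0$.

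The heart of the argument is checking $z\le w$ on the parabolic boundary of $Q$. On the bottom $\{t=0\}$: $z(\tau_n,x)=(\phi(\tau_n,x)+1)V(x)\le(C_{3,m}+1)V(x)$ by the uniform bound on $\phi$, while $w(0,x)=(C+1-BV(x))V(x)$, so the inequality holds provided $C\ge C_{3,m}+BC_1^m\delta$. On the outer lateral piece $(0,T_1]\times\partial\Omega$ both $z$ and $w$ vanish, which is exactly why one works with $z,w$ rather than $\phi,\Phi$. On the inner lateral piece $(0,T_1]\times\partial\Omega_{I,\delta}$ (i.e.\ $d(x)=\delta$) I would invoke Lemma \ref{lemma_inner_conv}: picking $n_{\epsilon}$ so that $\sup_{\Omega_{I,\delta}}|\phi(\tau_n+t,\cdot)|\le\epsilon$ for all $t\in[0,T]$, $n\ge n_{\epsilon}$, one gets $z(t+\tau_n,x)\le(1+\epsilon)V(x)$, while $w(t,x)=(C+1-BV(x)-At)V(x)\ge(C+1-BC_1^m\delta-AT_1)V(x)=(1+\epsilon)V(x)$; this is precisely what forces the value $T_1=(C-BC_1^m\delta-\epsilon)/A$ of \eqref{Tquasilineal}, and it requires $T_1\le T$. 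Then the parabolic comparison principle for the (rescaled) DNLE on $Q$ gives $z\le w$, that is \eqref{comp_phi_Phi}.

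The one genuinely delicate point is choosing $A,B,C,\delta$ so that all requirements hold simultaneously: the supersolution condition \eqref{condABCsupersolution} of Lemma \ref{Lemma_barrier} (with $\xi_1$ from \eqref{choice2_xi1}), the bottom condition $C\ge C_{3,m}+BC_1^m\delta$, the inclusion $\delta\le\min\{\xi_1,\xi_0\}$, and $T_1=(C-BC_1^m\delta-\epsilon)/A\in(0,T]$. I would fix $C$ large (so that $C$ beats $C_{3,m}$ with room), then $A$ large enough that $T_1\le T$ while keeping $C-BC_1^m\delta-\epsilon>0$, then $B$ large enough for \eqref{condABCsupersolution}, and finally $\delta$ small enough for the last two conditions — possible because with $B,C$ fixed $\xi_1$ is a fixed positive number and shrinking $\delta$ only helps. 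Everything else is the standard weak-comparison machinery already used repeatedly in the paper.
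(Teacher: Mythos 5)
Your proof is correct and follows the same overall strategy as the paper: parabolic comparison on the annular cylinder $(0,T_1]\times\Omega_{\delta}$, with the parabolic boundary split into bottom, inner lateral, and outer lateral pieces; Lemma \ref{lemma_inner_conv} handles the inner lateral piece, the uniform $L^\infty$ bound on $\phi$ together with a large $C$ handles the bottom, and the parameter bookkeeping at the end is the same, with the value of $T_1$ forced in the same way. The genuine difference is how you treat the outer lateral piece $[0,T_1]\times\partial\Omega$. The paper runs an approximation argument with solutions $u_k$ on shrinking domains $\Omega^k\subset\Omega$, where $\phi_k=-1<\Phi$ trivially on $\partial\Omega^k$, and then passes to the limit. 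You instead multiply the pointwise supersolution inequality \eqref{REFsupersolution} by $V^{p-1}>0$ and compare the ``undivided'' variables $z=v^m=(\phi+1)V$ and $w=(\Phi+1)V$, which both vanish on $\partial\Omega$ and satisfy/supersatisfy the same autonomous equation $(p-1)z^{p-2}z_t=\Delta_p z+\lambda_1 z^{p-1}$, so the outer boundary condition is automatic. This is a tidier route: it eliminates the singular factor $V^{-(p-1)}$ from \eqref{REFeq} at the source and dispenses with the limit $k\to\infty$. The implicit cost is that you invoke directly a comparison principle for the degenerate rescaled DNLE between a weak solution and a pointwise (but smooth, since $\nabla V\neq 0$ on $\Omega_{\xi_1}$ makes $V$ classical there) supersolution on a subdomain with zero outer boundary data; the paper's $u_k$-approximation is in effect one way of securing exactly that step, so the two arguments are not completely independent there, but yours reads more cleanly. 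One small caveat: when you order the choice of parameters ($C$, then $A$, then $B$, then $\delta$), you pick $A$ ``large enough that $T_1\le T$'' before $B$ and $\delta$ are fixed, although $T_1$ depends on them; this should be smoothed out, but the paper's own closing paragraph has the same level of informality, and the dependence is benign because shrinking $\delta$ only enlarges $T_1$ toward $(C-\epsilon)/A$, so it does not break the argument.
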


\begin{proof}

 We fixe $\epsilon>0$ and consider $0<\delta<\xi_1$ where $\xi_1>0$ is given as in Lemma \ref{Lemma_barrier}.
 Also, let $T>0$ and $(\tau_n)$ that we fixed in Assumptions (A).
    By the uniform inner convergence stated in Lemma \ref{lemma_inner_conv} we know there exists $n_{\epsilon ,\delta}>0$ such that
\begin{equation}\label{choice_t0}
|\phi(t+\tau_n,x)| <\epsilon \quad \text{for } x\in \Omega_{I,\delta} , \ t\in[0,T],\  n\geq n_{\epsilon ,\delta}.
\end{equation}
Once we will choose $\delta>0$ we will obtain $n_\epsilon$ as above.

A first condition on the parameters will be that
\begin{equation}\label{cond2ABC}
T_1(\epsilon,\delta)=\frac{C-BC_1^m\delta-\epsilon}{A} < T.
\end{equation}

Now, we consider the barrier function $\Phi$ and prove that $\phi(t+\tau_n,x) \leq \Phi(t,x)$, for a fixed $n \geq n_\epsilon$, on the set $(0,T_1) \times \Omega_{\delta}$, where $T_1=T_1(\epsilon,\delta)$. More exactly, inequality \eqref{comp_phi_Phi} follows as a consequence of the parabolic maximum principle on this set.

 Therefore, we have to check that this comparison is satisfied on the parabolic boundary formed by three pieces
${0}\times \Omega_{\delta} \cup (0,T_1) \times \partial \Omega_{I,\delta} \cup (0,T_1) \times \partial \Omega.$

\noindent $1.$ \emph{Comparison of $\phi$ with $\Phi$ at the initial section $t=0$.} We want to obtain that
\begin{equation}\label{comp_phi_Phi_initial_section}
\phi(\tau_n,x) \leq \Phi(0,x) =C-BV(x)
\end{equation}
for all $x \in \Omega_{\delta}.$
This is possible because of the uniform boundedness of $\phi$
\[
\left(\frac{C_0}{C_1}\right)^m-1=C_{2,m}\leq \phi \leq C_{3,m}=\left(\frac{C_1}{C_0}\right)^m-1,
\]
for all $x \in \Omega$ as a consequence of bounds \eqref{bound_v}. Now, we simply choose $C$ sufficiently large and $A$, $B$ to satisfy \eqref{condABCsupersolution}.

\noindent$2.$ \emph{Comparison of $\phi$ with $\Phi$ on the inner parabolic boundary.} This part of the boundary is given by the set
$$(0,T_1)\times \partial \Omega_{I,\delta}= \{(t,x): t\in (0,T_1), \ x\in \Omega, \ d(x)=\delta \}.$$
For $(t,x)$ as before, $\Phi(t,x)$ is bounded as follows
$$ C-At-BC_1^m \delta \leq\Phi(t,x)\leq C-At-BC_0^m \delta.$$
Let us fixe $\epsilon>0$ and $0<\delta<\xi_1$ where $\xi_1>0$ is given as in Lemma \eqref{Lemma_barrier}. By \eqref{choice_t0}
$$
\phi(t,x) <\epsilon \quad \text{for } x\in \Omega_{I,\delta} , \ t \in [0,T_1].
$$
Thus one can obtain $\phi\leq \Phi$ if
\begin{equation}\label{inner_compar_phi_Phi}
\epsilon \leq C-At-BC_1^m \delta, \quad \forall t \in [0,T_1].
\end{equation}
Since $C$ can not be small, this implies a choice for $A$ and $B$ compatible with \eqref{condABCsupersolution} from the construction of the barrier. This can be realized by choosing $\delta>0$ small. Once $C$ and $B$ are chosen it is sufficient to take $At$ small.

\noindent $3.$ \emph{Comparison of $\phi$ with $\Phi$ on the outer lateral boundary.} This part of the boundary is given by the set
$[0,T_1]\times \partial \Omega$, where we only know that $\phi =v^m/S^m -1$ is bounded. As in \cite{BDGV09} we can use an approximation trick using the solutions $u_k$ of problems posed in the domain $\Omega^k \subset \Omega$. We will prove the desired comparison \eqref{comp_phi_Phi} for the function $u_k$ and obtain it for $u$ by passing to the limit.

We know that $u_k \nearrow u$ as $k \rightarrow \infty$ uniformly on the compact set $[0,t]\times \overline{\Omega}$, for every $t\le T_1.$ Then
$$
\phi_k=\frac{u_k^m}{\mathcal{U}^m}-1=-1<0 \quad \text{on } [0,T_1]\times \partial \Omega^k,
$$
where $\mathcal{U}=e^{-\lambda_1 t}S(x)$ is a separate variables solutions of the DNLE in $\Omega.$ Thus by \eqref{inner_compar_phi_Phi} we have
$$\phi_k<0<C-At=\Phi\quad \text{on } [0,T_1]\times \partial \Omega^k.$$
Steps $1$ and $2$ hold also for $u_k$ since $u_k \leq u$. Thus, by the parabolic comparison principle we obtain that
$\phi_k \leq \Phi$ on the region $\Omega^k \cap \Omega_{\delta}$ for $t\in[0,T_1]$. Passing to the limit when $k\rightarrow \infty$ we obtain $\phi \leq \Phi$ on $[0,T_1]\times \Omega_{\delta}$.

We obtain in this way an improvement of the upper bound of $\phi$  near the boundary after some time delay given by
$$
t \leq T_1=T_1(\epsilon,\delta)=\frac{C-BC_1^m\delta-\epsilon}{A},
$$
which is the maximum that \eqref{inner_compar_phi_Phi} allows. Notice that the delay time $T_1(\epsilon,\delta)$ does not depend on the time $\tau_n$ we fixed at the beginning.

Therefore, in order to choose the desired parameters we perform the following steps: we choose $C$ sufficiently big to have \eqref{comp_phi_Phi_initial_section}. Then choose $A$ and $B$ to satisfy \eqref{condABCsupersolution}.
Finally we choose $\delta$ small such that \eqref{inner_compar_phi_Phi} and \eqref{cond2ABC} hold , that is $t\leq T_1(\epsilon,\delta)\leq T.$
\end{proof}

\medskip

\noindent \textbf{Better estimate from above for $\phi$ up to the boundary}

Under the assumptions of  Lemma \ref{lemma_inner_conv} and Lemma \ref{lemma_comp_phi_Phi} we deduce that for $t= \tau_n + T_1(\epsilon,\delta)$, where $T_1(\epsilon,\delta)$ is given by \eqref{Tquasilineal} and $ n\geq n_\epsilon$, the REF $\phi$ satisfies the upper bound
\begin{equation}
\phi(t,x) \leq \left\{
                 \begin{array}{ll}
                   \epsilon, & \hbox{$d(x)>\delta$;} \\
                   \epsilon+BC_1^m\delta, & \hbox{$d(x)<\delta$.}
                 \end{array}
               \right.
\end{equation}

Therefore, by fixing $\epsilon>0$, finding a barrier with constants $A$, $B$ and $C$ and then taking $\delta <\epsilon/(BC_1^m)$, we obtain the
time $T_1(\epsilon,\delta)$ and the level $n_{\epsilon}$ such that for all $n\ge n_{\epsilon}$ we have
\begin{equation}\label{estim_phi_T1}
\phi(\tau_n+T_1,x) \leq 2\epsilon \quad \forall x \in \Omega.
\end{equation}

This means that $v(T_1+\tau_n)\le (1+\epsilon )S$. The maximum principle implies now that the comparison is valid for all times $t\ge T_1+\tau_{n_\epsilon}$.
This proves that $\overline{c}_{\infty}\le c_*$, thus they are the same. One of the consequences is that $c_*$ does not depend on the subsequence,
therefore the whole family $v(t,\cdot)$ converges to $S=c_* f$ as $t\to\infty$. Moreover, we conclude the uniqueness of the profile $c_*f$ as well as the upper approximation stated in Proposition \ref{pro_behavior_bdry_REF}.

\subsection{Construction of lower barriers }

It remains to prove a similar bound for the REF $\phi$ from below.
To this aim we define
$$
\psi:=-\phi=1-\frac{v^m(t,x)}{S^m(x)}.
$$
We perform a similar approach as in the upper barrier case.

\noindent $\bullet$ \emph{The parabolic equation of $\psi$}
\begin{equation}\label{REFpsi_eq}
-(p-1)(1-\psi)^{p-2}\psi_t= V^{-(p-1)}\Delta_p ((1-\psi)V)+\lambda_1(1-\psi)^{p-1}.
\end{equation}

\noindent  $\bullet$ The function $\psi$ is uniformly bounded in $(t,x)$ for $t\geq 0$. This can be deduced from the estimates \eqref{bound_v} on $v$ and $S$, which is a stationary solution:
\[
1-\left(\frac{C_1}{C_0}\right)^m=C_{2,m}\leq \psi \leq C_{3,m}=1-\left(\frac{C_0}{C_1}\right)^m.
\]

\noindent  $\bullet$ In any interior region $\Omega_{I,\delta} \subset \Omega$, the function $\psi$ satisfies
$$\displaystyle{1-\psi= \frac{v^m}{V}>0} \quad \text{ in }  \Omega_{I, \delta}  \quad \text{for any } t\geq 0.$$

\begin{lemma}\label{Lemma_barrier_Psi}(\textbf{Lower barrier})
We can choose positive constants $A',B',C'$ so that for every $t_0>0$ the function
\begin{equation}\label{barrier_PSI}
\Psi(t,x)=C'+B'V(x)-A'(t-t_0),
\end{equation}
is a super-solution to equation \eqref{REFpsi_eq} on a parabolic region near the boundary
$$\Sigma_{\Psi}=\Sigma_{\Psi,\frac{1}{2}}=\left\{(t,x)\in (t_0,\infty): 0\leq \Psi(t,x)\leq \frac{1}{2} \right\},$$
and moreover $\Sigma_{\Psi}\subset (t_0,T_0) \times \Omega_{\xi_2},$ where $\xi_2 \leq \xi_0$.
\end{lemma}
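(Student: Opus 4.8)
The plan is to mirror, step for step, the proof of the upper barrier, Lemma~\ref{Lemma_barrier}, replacing $\phi$ by $\psi=-\phi$ and equation \eqref{REFeq} by \eqref{REFpsi_eq}. First I would write out what has to be verified: since $\Psi_t=-A'$ is constant, the requirement that $\Psi$ of the form \eqref{barrier_PSI} be a super-solution of \eqref{REFpsi_eq} on $\Sigma_\Psi$ reduces, after moving the time term to the left, to a one-sided inequality
\[
A'(p-1)(1-\Psi)^{p-2}\ \geq\ V^{-(p-1)}\Delta_p\big((1-\Psi)V\big)+\lambda_1(1-\Psi)^{p-1}\qquad\text{on }\Sigma_\Psi,
\]
oriented exactly as in the passage from \eqref{REFeq} to \eqref{REFsupersolution}. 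On $\Sigma_\Psi$ one has $1-\Psi\in[\tfrac12,1]$, so $(1-\Psi)^{p-2}$ and $(1-\Psi)^{p-1}$ are pinched between positive constants and the equation stays uniformly parabolic there — this is precisely the reason for cutting $\Sigma_\Psi$ at the level $\tfrac12$.

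The next step is to expand the nonlinear diffusion term. Writing $f(z)=(1-\Psi)\,z$ as a function of $z=V$, which is an affine-in-$V$ multiple of $V$, hence a quadratic $f$ with $f''\equiv\pm2B'$ (the sign being the one dictated by \eqref{barrier_PSI}), one has the chain-rule identity
\[
\Delta_p f(V)=|f'(V)|^{p-2}f'(V)\,\Delta_p V+(p-1)|f'(V)|^{p-2}f''(V)\,|\nabla V|^p,
\]
and substituting $-\Delta_p V=\lambda_1 V^{p-1}$, valid because $V$ solves the eigenvalue equation \eqref{eqV}, turns the super-solution requirement into an algebraic inequality among $\lambda_1(1-\Psi)^{p-1}$, $A'(p-1)(1-\Psi)^{p-2}$, the term $\lambda_1|f'(V)|^{p-2}f'(V)$, and the singular term $2B'(p-1)V^{-(p-1)}|f'(V)|^{p-2}|\nabla V|^p$. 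I would then fix the small parameter $\xi_2\le\xi_0$ so that on $\Omega_{\xi_2}$: (i) $f'(V)$ is bounded above and below by positive constants — using that $f'(0)$ is comparable to a constant on $\Sigma_\Psi$ together with $V(x)\le C_1^m d(x)$ from point (1) of Lemma~\ref{lemma_properties_V} — so that $|f'(V)|^{p-2}$ is itself two-sidedly bounded even when $1<p<2$; and (ii) $V^{-(p-1)}$ is as large as desired. Since $|\nabla V|$ is controlled from above and below near $\partial\Omega$ by point (3) of Lemma~\ref{lemma_properties_V}, the singular term is then comparable to $B'\,\xi_2^{-(p-1)}$ up to fixed constants.

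Exactly as in Lemma~\ref{Lemma_barrier}, the super-solution inequality is then implied by a single condition tying $B'$, $\xi_2$, $A'$, $C'$ and $\lambda_1$ together (the analogue of \eqref{condABCsupersolution}), which can be met by shrinking $\xi_2$ once $A'$, $B'$, $C'$ are fixed; note that, in contrast with the upper barrier, here $C'$ must be taken small, so that $f'(0)>0$ on $\Sigma_\Psi$, its precise size to be pinned down later in the companion comparison step (the analogue of Lemma~\ref{lemma_comp_phi_Phi}) by matching the crude bound $v\ge C_0 d$ of \eqref{bound_v} at the initial time. For the geometric claim $\Sigma_\Psi\subset(t_0,T_0)\times\Omega_{\xi_2}$ I would argue as in Lemma~\ref{Lemma_barrier}: because $\Psi$ is strictly decreasing in $t$ and affine in $V$, the constraints $0\le\Psi\le\tfrac12$ confine $V(x)$ — hence $d(x)$, via $C_0^m d(x)\le V(x)$ — to a bounded range and $t$ to a finite interval $(t_0,T_0)$, and one chooses the parameters so that this range sits inside $\{d<\xi_2\}$.

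I expect the genuinely delicate part to be entirely in the $p$-Laplacian bookkeeping of the second step: keeping $f'(V)$ uniformly away from $0$ up to the boundary so that $|f'(V)|^{p-2}$ does not blow up in the singular range $1<p<2$, and controlling the extra term $(p-1)|f'(V)|^{p-2}f''(V)|\nabla V|^p$ — absent for the ordinary Laplacian treated in \cite{BDGV09} — which is exactly what forces the two-sided gradient estimate of point (3) of Lemma~\ref{lemma_properties_V} into the argument. Once the sign of the singular term is recognised as the favourable one and the constants are chosen accordingly, the rest (the parameter inequality, the uniform parabolicity on $\Sigma_\Psi$, and the containment of $\Sigma_\Psi$) is a routine transcription of the upper-barrier argument.
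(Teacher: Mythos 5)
Your proposal follows the paper's proof quite closely: the reduction to the super-solution inequality, the chain-rule expansion of $\Delta_p\big((1-\Psi)V\big)$ with $g(z)=(1-\Psi)z$ quadratic in $z=V$, the substitution $-\Delta_p V=\lambda_1 V^{p-1}$, the identification of $-2B'(p-1)V^{-(p-1)}|g'(V)|^{p-2}|\nabla V|^p$ as the favourable singular term, and the shrinking of $\xi_2$ so that $g'(V)$ stays bounded between positive constants on $\Omega_{\xi_2}$ are all exactly what the paper does.

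Two of your side remarks, however, are wrong, and one of them matters for the parameter bookkeeping. You assert that, in contrast with the upper barrier, $C'$ must be taken \emph{small} so that $g'(0)>0$ on $\Sigma_\Psi$. This is not so. On $\Sigma_\Psi$ one has $\Psi\le\tfrac12$, i.e.\ $A'(t-t_0)\ge C'+B'V-\tfrac12\ge C'-\tfrac12$, and therefore $g'(0)=1-C'+A'(t-t_0)\ge\tfrac12$ automatically, for \emph{any} $C'>0$. The cutoff $\Psi\le\tfrac12$ built into $\Sigma_\Psi$ already forces $t$ far enough past $t_0$ that the sign of $g'(0)$ is never in danger. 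Indeed, the companion comparison step (Lemma~\ref{lemma_comp_psi_Psi}) requires $C'>1$ to dominate the a priori bound on $\psi$ at the initial section, so imposing smallness of $C'$ here would make the two lemmas incompatible. Finally, the statement that the term $(p-1)|f'(V)|^{p-2}f''(V)|\nabla V|^p$ is ``absent for the ordinary Laplacian'' is imprecise: for $p=2$ it reduces to $f''(V)|\nabla V|^2$, which is present and is in fact the key favourable term in \cite{BDGV09}; what disappears when $p=2$ is only the factor $|f'(V)|^{p-2}$ that forces the two-sided control of $g'(V)$ and $|\nabla V|$ in the $p$-Laplacian setting. These are corrections of detail; the structure of your argument is otherwise the same as the paper's.
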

\begin{proof}
We will prove that the function $\Psi$ given by \eqref{barrier_PSI} is a supersolution for equation \eqref{REFeq} on the parabolic region $\Sigma_{\Psi}$ if we can find constants $A'$, $B'$ and $C'$ such that
\begin{equation}\label{REFsupersolution_Psi}
-(p-1)(1-\Psi)^{p-2}\Psi_t \ge V^{-(p-1)}\Delta_p ((1-\Psi)V)+\lambda_1(1-\Psi)^{p-1}.
\end{equation}
From the beginning we assume that $(t,x) \in \Sigma_{\Psi} \subset (t_0,T_0) \times \Omega_{\xi_2}$ where $T_0$ is such that
\begin{equation}\label{time_interval_barrier_PSI}
\frac{C'+B'V(x)-1/2 }{A'}<T_0-t_0 \leq \frac{C'+B'V(x) }{A'}.
\end{equation}

The left hand side term of \eqref{REFsupersolution_Psi} is positive on $\Sigma_{\Psi}$
\begin{equation}
-(p-1)(1-\Psi)^{p-2}\Psi_t = (p-1)A'(1-\Psi)^{p-2}.
\end{equation}
The right hand side term \eqref{REFsupersolution_Psi} is of the form
$$V^{-(p-1)}\Delta_p ((1-\Psi)V)+ \lambda_1(1-\Psi)^{p-1}=V^{-(p-1)}\Delta_p g(V)+ \lambda_1(1-\Psi)^{p-1}$$
where
$$
g(z)=(1-C-Bz+A(t-t_0))z,
$$
and
  $$
 \Delta_p g(V)=|g'(V)|^{p-2}g'(V) \Delta_p V + (p-1)|g'(V)|^{p-2}g''(V)|\nabla V|^p.
  $$
\newpage

\noindent \textbf{Properties of the function $g$}
\begin{enumerate}
  \item Function $g$ is a concave parabola with zero values at the points $z=0$ and $z=z_0$ where
$$z_0:=\frac{1-C'+A'(t-t_0)}{B'}.$$

  \item The derivatives are
  $$
g'(z)=1-C'-2B'z+A'(t-t_0), \quad g''(z)=-2B'.
$$
\item When applied to $V$, $g'(V(x))$ is positive and bounded sufficiently close the boundary.

More exactly, we consider
\begin{equation}\label{choice2_xi2_PSI}
\xi_2 = \min\left\{\xi_0, \ \frac{1}{C_1^m}\frac{z_0}{4}= \frac{1-C'+A'(t-t_0)}{4B'C_1^m} \right\}.
\end{equation}
For this choice we obtain the following bound on  $\Omega_{\xi_2}$:
$$
0<V(x) \leq C_1^m d(x) \leq C_1^m \xi_2 \leq \frac{z_0}{4}
$$
and then
$$0<\frac{k_1}{2}\leq g'(V(x))\leq k_1,$$
where
\begin{equation}\label{k_1_PSI}
k_1:=g'(0)=1-C+A'(t-t_0), \qquad  g'\left(\frac{z_0}{4}\right)= \frac{1-C'+A'(t-t_0)}{2}=\frac{k_1}{2}.
\end{equation}
\end{enumerate}

\noindent \textbf{Sufficient conditions for the parameters}

Since $V$ is a solution of the stationary problem \eqref{eqV} then $-\Delta_p V=\lambda_1 V^{p-1}$ and therefore the supersolution inequality \eqref{REFsupersolution_Psi} can be rewritten as
\begin{equation}\label{REFsupersolution_Psi_2}
(p-1)A'(1-\Psi)^{p-2}\geq  -\lambda_1 |g'(V)|^{p-2}g'(V) -2B'(p-1)V^{-(p-1)}|g'(V)|^{p-2}|\nabla V|^p +\lambda_1(1-\Psi)^{p-1}.
\end{equation}
Next, the idea is that when we are sufficiently close to the boundary, $V^{-(p-1)}$ will be large enough and then the right hand side term of \eqref{REFsupersolution_Psi_2} will be negative. More exactly, on $\Sigma_{\Psi}$
$$-2B'(p-1)V^{-(p-1)}|g'(V)|^{p-2}|\nabla V|^p +\lambda_1(1-\Psi)^{p-1} \leq -2B'(p-1)(C_1^m \xi_2)^{-(p-1)}k_2 + \lambda_1,$$
where $k_2$ is a positive constant given by
$$|g'(V)|^{p-2}|\nabla V|^p \leq \max\{k_1^{p-2},(k_1/2)^{p-2} \}\cdot  K_2^p =:k_2.$$
Moreover, if $\xi_2$ is sufficiently small
\begin{equation*}
\xi_2 \leq \left( \frac{2B'(p-1)k_2}{C_1 \lambda_1}   \right)^{1/(p-1)}
\end{equation*}
then
$$-2B'(p-1)V^{-(p-1)}|g'(V)|^{p-2}|\nabla V|^p +\lambda_1(1-\Psi)^{p-1} \leq 0$$
and inequality \eqref{REFsupersolution_Psi_2} holds true. We remark that the above conditions on the distance to the boundary $\xi_2$ can be rewritten as
 \begin{equation}\label{choice3_xi2_PSI}
\xi_2 = \min\left\{\xi_0, \ \frac{1-C'+A'(t-t_0)}{4B'C_1^m} , \ \left( \frac{2B'(p-1)k_2}{C_1 \lambda_1}   \right)^{1/(p-1)} \right\}.
\end{equation}
\end{proof}

\begin{lemma}\label{lemma_comp_psi_Psi}
Let $\Psi$ be the barrier function introduced in Lemma \ref{Lemma_barrier_Psi}, given by
\[
\Psi(t,x)=C'+B'V(x)-A't.
\]
Let $\tau_n \rightarrow \infty$ be a sequence along which the REF converges to $1$ as stated in Assumptions (A). Then for any $\epsilon>0$ we can choose $n_\epsilon>0$ and positive constants $A'$, $B'$, $C'$ and $\delta$ as in Lemma \ref{Lemma_barrier_Psi} such that
\begin{equation}\label{comp_psi_Psi}
\psi(t+\tau_n,x) \leq \Psi(t,x) ,  \quad \forall x\in \Omega_{\delta} , \ \forall n \geq n_{\epsilon},  \  \forall t\in [0,T_2],
\end{equation}
where
\begin{equation}\label{Tquasilineal2}
T_2 = T_2(\epsilon,\delta)=\frac{C'+B'C_0^m\delta-\epsilon}{A'}.
\end{equation}
\end{lemma}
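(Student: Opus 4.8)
The plan is to mirror the proof of Lemma~\ref{lemma_comp_phi_Phi}, replacing $\phi$, equation~\eqref{REFeq} and the upper barrier $\Phi$ by $\psi=-\phi$, equation~\eqref{REFpsi_eq} and the lower barrier $\Psi$ of Lemma~\ref{Lemma_barrier_Psi}, and to deduce~\eqref{comp_psi_Psi} from the parabolic comparison principle applied on a thin near-boundary cylinder $(0,T_2)\times\Omega_{\delta}$ that is kept inside the region $\Sigma_{\Psi}$ where $\Psi$ is a genuine supersolution of~\eqref{REFpsi_eq}.

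First I would fix $\epsilon>0$ and $0<\delta<\xi_2$, with $\xi_2$ as in Lemma~\ref{Lemma_barrier_Psi}. Since $\psi=-\phi$, the inner convergence stated in Lemma~\ref{lemma_inner_conv} at once provides $n_{\epsilon}$ with $|\psi(t+\tau_n,x)|<\epsilon$ for all $x\in\Omega_{I,\delta}$, $0\le t\le T$ and $n\ge n_{\epsilon}$. Then I would check that $\psi(t+\tau_n,x)\le\Psi(t,x)$ on the three pieces of the parabolic boundary. On the initial section $t=0$ one has $\Psi(0,x)=C'+B'V(x)\ge C'$ since $V\ge0$, while $\psi\le C_{3,m}$ by the uniform bound coming from~\eqref{bound_v}, so it is enough to take $C'\ge C_{3,m}$. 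On the inner lateral boundary $\{d(x)=\delta\}$ one has $\psi(t+\tau_n,x)<\epsilon$ by the inner convergence, while $\Psi(t,x)\ge C'+B'C_0^m\delta-A't\ge C'+B'C_0^m\delta-A'T_2=\epsilon$ by the very definition~\eqref{Tquasilineal2} of $T_2$, hence $\psi<\epsilon\le\Psi$ there. On the outer lateral boundary $\partial\Omega$, where only the crude two-sided bound on $\psi=v^m/S^m-1$ is available, I would use the approximation trick of~\cite{BDGV09}: take the solutions $u_k\nearrow u$ of Problem~\ref{DNLE} posed on subdomains $\Omega^k\subset\subset\Omega$, set $\psi_k=1-u_k^m/\mathcal{U}^m$ with $\mathcal{U}=e^{-\lambda_1 t}S$, establish $\psi_k\le\Psi$ on the portion of $\Omega^k\cap\Omega_{\delta}$ lying inside $\Sigma_{\Psi}$ (using $\psi_k\le C_{3,m}$ where $\Psi$ is not small and the supersolution inequality where it is), and let $k\to\infty$.

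The constants would be chosen in the same order as in Lemma~\ref{lemma_comp_phi_Phi}: first $C'$ large enough for the initial comparison; then $A'$, $B'$ and $\xi_2$ so that $\Psi$ is a supersolution by Lemma~\ref{Lemma_barrier_Psi}, that is, so that the conditions behind~\eqref{choice3_xi2_PSI} hold; and finally $\delta$ small, which also guarantees $T_2(\epsilon,\delta)<T$ so that Lemma~\ref{lemma_inner_conv} may be applied on the whole interval $[0,T_2]$. I expect the main obstacle to be exactly this outer-boundary step: near $\partial\Omega$ both $v$ and $S$ vanish and only a bound on the quotient $v^m/S^m$ is known, so one must keep the comparison cylinder strictly inside $\Sigma_{\Psi}$, where Lemma~\ref{Lemma_barrier_Psi} secures the supersolution inequality in spite of the mixed second-order terms produced by the $p$-Laplacian; this is the analogue of the boundary analysis carried out for the linear operator in~\cite{BDGV09}, and it, rather than the routine algebra of the other two boundary checks, is the delicate point.
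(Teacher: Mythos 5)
The overall architecture of your argument matches the paper's: fix $\epsilon$, $\delta<\xi_2$, get $n_\epsilon$ from inner convergence (Lemma~\ref{lemma_inner_conv}), verify the comparison on the three pieces of the parabolic boundary, and invoke the parabolic maximum principle. The initial-section and inner-lateral-boundary checks are right and match the paper's. However, your treatment of the outer lateral boundary uses the \emph{wrong approximation direction}, and this is not a cosmetic point.

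You propose taking $u_k\nearrow u$ solutions of the problem on interior subdomains $\Omega^k\subset\subset\Omega$, so $u_k=0$ on $\partial\Omega^k$ and hence $\psi_k=1-u_k^m/\mathcal{U}^m=1$ there. But the region $\Sigma_\Psi$ where $\Psi$ is a supersolution is precisely where $0\le\Psi\le\tfrac12$, and near the end of the time interval $\Psi(t,x)\approx C'-A't\approx\epsilon-B'C_0^m\delta$ near the boundary, which is far below $1$. So $\psi_k=1>\Psi$ there, and the comparison principle does not apply. Moreover, $u_k\le u$ gives $\psi_k\ge\psi$, so the initial and inner-boundary estimates for $\psi$ do \emph{not} automatically transfer to $\psi_k$. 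The fix, which is what the paper does, is to reverse both directions relative to the upper-barrier Lemma~\ref{lemma_comp_phi_Phi}: approximate from \emph{outside}, taking solutions $u_k$ on enlarged domains $\Omega^k\supset\Omega$ with $u_k\searrow u$. Then on $\partial\Omega$ (interior to $\Omega^k$) one has $u_k>0$ while $\mathcal{U}\to0$, so $\psi_k=1-u_k^m/\mathcal{U}^m\to-\infty$, making the outer-boundary comparison trivial; and $u_k\ge u$ gives $\psi_k\le\psi$, so the estimates of steps 1 and 2 carry over to $\psi_k$. The asymmetry is forced by the sign: $\phi$ and $\psi=-\phi$ behave oppositely as the trial solution is moved up or down, so the approximation scheme for the lower barrier must be the mirror image of the one for the upper barrier, not a copy of it.
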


\begin{proof}
 We fixe $\epsilon>0$ and consider $0<\delta<\xi_2$ where $\xi_2>0$ is given as in Lemma \ref{Lemma_barrier_Psi}.
 Also, let $T>0$ and $(\tau_n)$ that we fixed in Assumptions (A).

We adapt the proof of Lemma \ref{Lemma_barrier} by writing the estimates in terms of the function $\psi=-\phi$.  By the uniform inner convergence stated in Lemma \ref{lemma_inner_conv} we know there exists $n_\epsilon>0$ such that
\begin{equation}\label{ineq_psi_eps}
|\psi(\tau_n+t,x)| <\epsilon \quad \text{for } x\in \Omega_{I,\delta} , \ t\in[0,T],\  n\geq n_\epsilon.
\end{equation}

We impose a first condition on the parameters $A'$, $B'$, $C'$
\begin{equation}\label{T2epsdelta}
T_2(\epsilon,\delta)=\frac{C'+B'C_0^m\delta-\epsilon}{A'}<T.
\end{equation}

Now, we consider the barrier function $\Psi$ constructed in Lemma \ref{Lemma_barrier_Psi} and prove that $\psi(t+\tau_n,x) \le \Psi(t,x)$, for a fixed $n \geq n_\epsilon$, on the set $(0,T_2) \times \Omega_{\delta}$, where $T_2=T_{2,\epsilon,\delta}$. More exactly, inequality \eqref{comp_psi_Psi} follows as a consequence of the parabolic maximum principle on this set.

Therefore, we have to check that this comparison is satisfied on the parabolic boundary formed by three pieces
${0}\times \Omega_{\delta} \cup (0,T_2) \times \partial \Omega_{I,\delta} \cup (0,T_2) \times \partial \Omega.$

\noindent $1.$ \emph{Comparison of $\psi$ with $\Psi$ at the initial section $t=0$.} We want to obtain that
$$\psi(\tau_n,x) \leq \Psi(0,x) =C'+B'V(x)$$
for all $x \in \Omega_{\delta}.$
This is possible because of the uniform boundedness of $\psi$
$$
1-\left(\frac{C_1}{C_0}\right)^m=C_{3,m}\leq \psi \leq C_{4,m}=1-\left(\frac{C_0}{C_1}\right)^m,
$$
for all $x \in \Omega$ as a consequence of bounds \eqref{bound_v}. Now, we simply choose $C'>1$.

\noindent$2.$ \emph{Comparison of $\psi$ with $\Psi$ on the inner parabolic boundary.} This part of the boundary is given by the set
$$(0,T_2)\times \partial \Omega_{I,\delta}= \{(t,x): t\in (0,T_2), \ x\in \Omega, \ d(x)=\delta \}.$$
For $(t,x)$ as before, $\Psi(t,x)$ is bounded as follows
$$ C'-A't+B'C_0^m \delta \leq \Phi(t,x)\leq C'-A't+B'C_1^m \delta.$$
Let us fixe $\epsilon>0$ and $0<\delta<\xi_2$ where $\xi_2>0$ is given as in Lemma \eqref{Lemma_barrier_Psi}. By \eqref{ineq_psi_eps}
$$
\psi(t+\tau_n,x) <\epsilon \quad \text{for } x\in \Omega_{I,\delta} , \ t \in [0,T_2].
$$
Thus one can obtain $\psi\leq \Psi$ if
\begin{equation}\label{inner_compar_psi_Psi}
\epsilon \leq   C'-A't+B'C_0^m \delta, \quad \forall t \in [0,T_2],
\end{equation}
or equivalently
$$ t \leq \frac{C'+B'C_0^m\delta-\epsilon}{A'}=T_2(\epsilon,\delta).$$

\noindent $3.$ \emph{Comparison of $\psi$ with $\Psi$ on the outer lateral boundary.} This part of the boundary is given by the set
$[0,T_2]\times \partial \Omega$, where we only know that $\psi =1-v^m/S^m$ is bounded. Here we will use an approximation trick using the solutions $u_k$ of problems posed in an extended domain $\Omega^k \supset \Omega$. Like in Lemma \ref{lemma_comp_phi_Phi} we prove the desired comparison \eqref{comp_psi_Psi} for the approximating function $u_k$. We know that $u^k \searrow u$ as $k \rightarrow \infty$ uniformly on the compact set $[0,t]\times \overline{\Omega}$, for every $t\le T_2.$
Formally
$$
\psi_k=1-\frac{u_k^m}{\mathcal{U}^m}=-\infty \quad \text{on }[0,T_2]\times \partial{\Omega},
$$
where $\mathcal{U}=e^{-\lambda_1 t}S(x)$ is a separate variables solutions of the DNLE in $\Omega.$ On the other hand, on $[0,T_2]\times \partial \Omega$,
$$
\Psi(t,x)= C'-A't \geq C'- A' T_2=\epsilon-BC_0^m\delta
$$
and thus the comparison $\psi_k\leq \Psi$ holds true on the outer lateral boundary. Finally, steps $1$ and $2$ hold also for $u_k$ since $u_k\geq u$ and thus, by parabolic comparison we obtain that $\phi_k\leq \Phi$ on $[0,T_2]\times \Omega_{\delta}.$ Passing to the limit when $k \rightarrow \infty$ we obtain $\psi \leq \Psi$ on $[0,T_2]\times \Omega_{\delta}.$

We obtain in this way the an improvement of the lower bound of $\phi$ near the boundary after some time delay given by
$$
t \leq T_2(\epsilon,\delta)=\frac{C'+B'C_0^m\delta-\epsilon}{A'},
$$
which is the maximum that \eqref{inner_compar_psi_Psi} allows. Notice that the delay time $T_2(\epsilon,\delta)$ does not depend on the time $\tau_n$ we fixed at the beginning.

Therefore, in this case of comparison with lower barriers, in order to choose the desired parameters we perform the steps: we choose $C'>1$ big enough to have the inner comparison and then we choose $A'$, $B'$ such that condition \eqref{inner_compar_psi_Psi} holds. Finally choose $\delta$ small enough such that condition \eqref{T2epsdelta} holds.

\end{proof}

\medskip

\noindent \textbf{Better estimate from below for $\phi$ up to the boundary}

Under the assumptions of  Lemma \ref{lemma_inner_conv} and Lemma \ref{lemma_comp_psi_Psi} we deduce that for $t= \tau_n + T_2(\epsilon,\delta)$, for a fixed $n \geq n_\epsilon$, where $T_2(\epsilon,\delta)$ is given by \eqref{Tquasilineal2} the REF $\psi=-\phi$ satisfies the upper bound
\begin{equation}
\psi(t,x) \leq \left\{
                 \begin{array}{ll}
                   \epsilon, & \hbox{$d(x)>\delta$;} \\
                   \epsilon-B'C_0^m\delta, & \hbox{$d(x)<\delta$.}
                 \end{array}
               \right.
\end{equation}

Therefore, by fixing $\epsilon>0$, finding a barrier $\Psi$ with constants $A'$, $B'$ and $C'$ and then taking $\delta <\epsilon/(B'C_0^m)$, we obtain a time $T_2(\epsilon,\delta)$  and a $n_\epsilon$ such that for all $n\ge n_{\epsilon}$ we have
\begin{equation}\label{estim_psi_T2}
\psi(\tau_n+T_2,x) \leq \epsilon \quad \forall x \in \Omega.
\end{equation}

An immediate consequence is the lower estimate of $v$ in terms of the profile $v(T_2+\tau_n)\ge (1-\epsilon )S$. The maximum principle implies now that the comparison is valid for all times $t\ge T_2+\tau_{n_\epsilon}$. This proves that $\underline{c}_{\infty}\ge c^*$, so that they are the same.
Therefore we proved that
$$
\underline{c}_{\infty}= c^*=\overline{c}_{\infty}.
$$
This concludes the part of the lower approximation stated in Proposition \ref{pro_behavior_bdry_REF}.

Once we proved the uniqueness of the asymptotic profile and, Theorem \ref{Th_conv_Rel_error_quasilinear} follows as a consequence of the estimates
\eqref{estim_phi_T1} and \eqref{estim_psi_T2}.

\begin{figure}[!h]\label{figure_Barriers}
	    \centering
\subfigure[Behaviour of $\Phi(t,x)$]{\includegraphics[width=65mm,height=50mm]{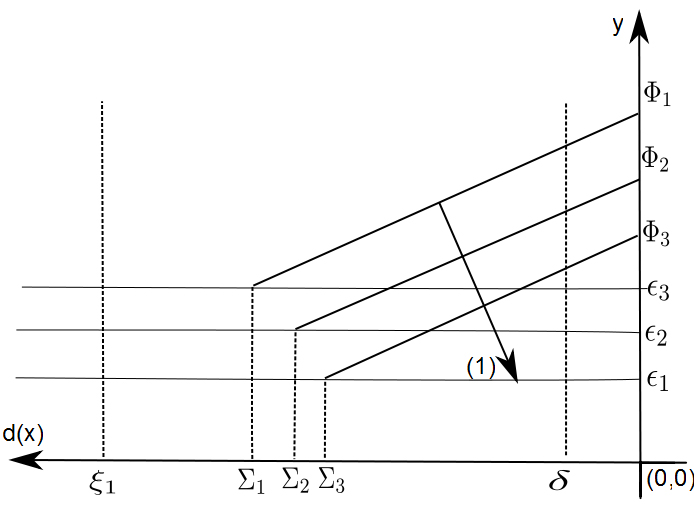}} 
\qquad
\subfigure[Behaviour of $\Psi(t,x)$]{\includegraphics[width=65mm,height=50mm]{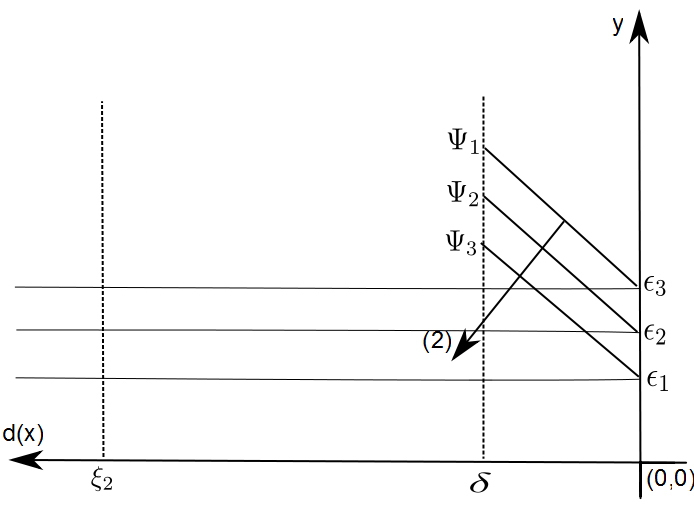}}
\caption{\small{Idea of the behaviour of the barriers: $y$-axis: values of $\Phi(t,x)$, $x$-axis: values of $d(x)=d(x,\partial \Omega)$,i.e. the distance
to the boundary. $\Sigma_i$: the points where the barrier (a) $\Phi(t,x)=\epsilon_i$, (b)$\Psi(t,x)=\epsilon_i$, $i=1,2,3$. $\epsilon_i$:
different values of $\epsilon$( decreasing with i=1,2,3) give different barriers $\Phi_i$, $\Psi_i$ decreasing with $\epsilon$ as the arrows (1) and (2) indicates.  $\xi_1$ and $\delta$ as in Lemma \ref{Lemma_barrier}, $\xi_2$ as in Lemma \ref{Lemma_barrier_Psi}.} }
\end{figure}
\end{section}
\begin{section}{Appendix}

\subsection{Two convergence results}
The following lemma can be easily proved with basic computations.

\medskip

\begin{lemma}\label{Lemma_conv_measure}{\textbf{(Property of the convergence in measure)}}
Let $(f_n)_n, f \subset L^{p}(\Omega)$ a sequence of functions such that
\begin{itemize}
\item $f_n \rightarrow f$ in measure;
\item $\|f_n\|_{L^p(\Omega)}$ uniformly bounded.
\end{itemize}
Then $$f_n \rightarrow f \text{ in }L^q(\Omega), \quad \text{ for every  } 1\leq q <p.$$
\end{lemma}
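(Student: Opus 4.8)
The plan is to give the standard elementary argument, combining the convergence in measure with a Hölder-type tail estimate that exploits the uniform $L^p$ bound and the fact that $q<p$; no subsequence extraction is actually needed.

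First I would record that, since $f\in L^p(\Omega)$ and $\sup_n\|f_n\|_{L^p(\Omega)}=:M<\infty$, the differences are uniformly bounded in $L^p(\Omega)$ as well:
\[
\|f_n-f\|_{L^p(\Omega)}\le \|f_n\|_{L^p(\Omega)}+\|f\|_{L^p(\Omega)}\le M+\|f\|_{L^p(\Omega)}=:M'<\infty,\qquad \forall\, n.
\]
Next, fix $q$ with $1\le q<p$ and a parameter $\lambda>0$, and split $\Omega$ according to the size of $|f_n-f|$. On the set $A_{n,\lambda}=\{x\in\Omega:|f_n(x)-f(x)|\le\lambda\}$ one trivially has $\int_{A_{n,\lambda}}|f_n-f|^q\,dx\le\lambda^q|\Omega|$ (here is where the finiteness of $|\Omega|$ enters), while on $B_{n,\lambda}=\{x\in\Omega:|f_n(x)-f(x)|>\lambda\}$ Hölder's inequality with conjugate exponents $p/q$ and $p/(p-q)$ gives
\[
\int_{B_{n,\lambda}}|f_n-f|^q\,dx\le \|f_n-f\|_{L^p(\Omega)}^{q}\,|B_{n,\lambda}|^{\frac{p-q}{p}}\le (M')^{q}\,|B_{n,\lambda}|^{\frac{p-q}{p}}.
\]
Adding the two contributions yields, for every $n$ and every $\lambda>0$,
\[
\int_{\Omega}|f_n-f|^q\,dx\le\lambda^q|\Omega|+(M')^q\,|B_{n,\lambda}|^{\frac{p-q}{p}}.
\]

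Finally I would conclude as follows. Given $\epsilon>0$, first choose $\lambda>0$ so small that $\lambda^q|\Omega|<\epsilon/2$; with $\lambda$ now fixed, the hypothesis $f_n\to f$ in measure gives $|B_{n,\lambda}|\to 0$ as $n\to\infty$, hence there is $N$ such that $(M')^q|B_{n,\lambda}|^{(p-q)/p}<\epsilon/2$ for all $n\ge N$. Combining, $\|f_n-f\|_{L^q(\Omega)}^{q}<\epsilon$ for $n\ge N$, which is exactly the assertion $f_n\to f$ in $L^q(\Omega)$. An alternative route I could take is to pass to an a.e.-convergent subsequence (convergence in measure guarantees one exists), note that $\{|f_n-f|^q\}$ is uniformly integrable because it is bounded in $L^{p/q}(\Omega)$ with $p/q>1$, apply Vitali's convergence theorem on the finite measure space $\Omega$, and then upgrade to convergence of the full sequence via the usual subsequence argument since the limit is always $f$. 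There is no serious obstacle here; the only points demanding a little care are the uniform bound on $\|f_n-f\|_{L^p}$ and the correct choice of Hölder exponents, together with the use of $|\Omega|<\infty$.
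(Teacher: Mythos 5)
Your proof is correct. The paper does not actually supply a proof of this lemma -- it merely states that it ``can be easily proved with basic computations'' -- so there is nothing to compare against; the splitting into $\{|f_n-f|\le\lambda\}$ and $\{|f_n-f|>\lambda\}$ combined with H\"older on the bad set is precisely the standard ``basic computation'' the authors have in mind, and your alternative Vitali route is an equally valid variant.
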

Another useful result in our proofs is a lemma concerning nonlinear monotone operators due to Brezis \cite{BrezisMonotonicity}.
\begin{lemma}\label{brezis}
Let $A$ be a maximal monotone operator on a Hilbert space $H$.
Let $Z_n$ and $W_n$ be measurable functions from $\Omega$ (a finite measure space) into $H$.
Assume $Z_n \rightarrow Z$ a.e. on $\Omega$ and $W_n \rightharpoonup W$ weakly in $L^1(\Omega;H)$.
If $W_n(x) \in A(Z_n(x))$ a.e. on $\Omega$, then $W(x) \in A(Z(x))$ a.e. on $\Omega$.
\end{lemma}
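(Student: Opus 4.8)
The plan is to return to the monotonicity inequality that defines the graph of $A$ and to show that it is preserved in the limit; the only genuine difficulty is the passage to the limit in the pairing $\langle W_n,Z_n\rangle_H$, for which $W_n$ is controlled only weakly in $L^1$ and $Z_n$ only almost everywhere. First I would recall that maximal monotonicity of $A$ is equivalent to: $w\in A(z)$ iff $\langle w-\eta,\,z-\zeta\rangle_H\ge 0$ for every $(\zeta,\eta)$ in the (closed) graph of $A$. Since $W_n(x)\in A(Z_n(x))$ a.e. and $(\zeta,\eta)\in A$, monotonicity of $A$ gives, for each fixed $(\zeta,\eta)\in A$,
$$\langle W_n(x)-\eta,\,Z_n(x)-\zeta\rangle_H\ \ge\ 0\qquad\text{for a.e. }x\in\Omega .$$
It therefore suffices to prove $\langle W(x)-\eta,\,Z(x)-\zeta\rangle_H\ge 0$ for a.e. $x$, for each fixed $(\zeta,\eta)\in A$: taking a countable dense subset of the graph of $A$ — available since $H=\mathbb R^N$ in our applications — and discarding a countable union of null sets, one recovers $W(x)\in A(Z(x))$ a.e. by maximality and closedness of the graph.

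Second, I would fix $(\zeta,\eta)\in A$, a number $R>0$, and an arbitrary measurable $S\subseteq\Omega$, and set $E_R=\{x:|Z(x)|\le R\}$; these sets have finite measure and increase to $\Omega$ up to a null set as $R\to\infty$, since $Z$ is finite a.e. By Egorov's theorem applied to $Z_n\to Z$ on $E_R$, for each $\varepsilon>0$ there is $F\subseteq E_R$ with $|E_R\setminus F|<\varepsilon$ on which $Z_n\to Z$ uniformly. On $F\cap S$ I would split
$$\int_{F\cap S}\langle W_n-\eta,\,Z_n-\zeta\rangle
=\int_{F\cap S}\langle W_n-\eta,\,Z-\zeta\rangle
+\int_{F\cap S}\langle W_n-\eta,\,Z_n-Z\rangle .$$
The left-hand side is $\ge 0$. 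In the second term on the right, $|Z_n-Z|\to0$ uniformly on $F$ while $\{W_n\}$ is bounded in $L^1(\Omega;H)$ (weakly convergent sequences are norm bounded), so this term tends to $0$. In the first term, the field $\zeta+\chi_{F\cap S}(Z-\zeta)$ lies in $L^\infty(\Omega;H)$ because $|Z|\le R$ on $F$, so the weak convergence $W_n\rightharpoonup W$ in $L^1(\Omega;H)$ lets me pass to the limit. Hence $\int_{F\cap S}\langle W-\eta,\,Z-\zeta\rangle\ge0$.

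Finally, I would let $\varepsilon\to0$ (replacing $F$ by an increasing union so that $|E_R\setminus F|=0$); since on $E_R$ the integrand is dominated by $(R+|\zeta|)(|W|+|\eta|)\in L^1$, dominated convergence gives $\int_{E_R\cap S}\langle W-\eta,\,Z-\zeta\rangle\ge 0$, and as $S$ is arbitrary this forces $\langle W(x)-\eta,\,Z(x)-\zeta\rangle_H\ge0$ for a.e. $x\in E_R$, hence a.e. on $\Omega$ after letting $R\to\infty$. Combined with the reduction in the first step, this completes the proof. I expect the main obstacle to be precisely the handling of the pairing $\langle W_n,Z_n\rangle$: weak $L^1$ control of $W_n$ cannot be paired with a merely a.e.-convergent, possibly unbounded $Z_n$, and the role of the Egorov localization is to trade $Z_n$ for the bounded test field $Z-\zeta$ on a large set where the remainder $Z_n-Z$ is uniformly small; everything else is routine measure-theoretic bookkeeping.
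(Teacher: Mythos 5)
The paper itself does not prove this lemma; it is stated as a known result and attributed to Brezis \cite{BrezisMonotonicity}, so there is no internal proof to compare against. Your reconstruction is correct, and it is the natural one: reduce to Minty's characterization of maximal monotonicity, and use Egorov on the sets where $|Z|\le R$ to trade the merely a.e.-convergent $Z_n$ for the bounded test field $Z-\zeta$, so that the weak-$L^1$ convergence of $W_n$ can be exploited against an $L^\infty$ multiplier. Two small points worth flagging. First, the step that collapses a ``for each $(\zeta,\eta)$, a.e.\ $x$'' statement into an ``a.e.\ $x$, for all $(\zeta,\eta)$'' statement genuinely requires the graph of $A$ to admit a countable dense subset; you are right that this is automatic when $H$ is separable and in particular in the paper's setting $H=\mathbb R^N$, but the lemma as stated allows a general Hilbert space, so either a separability hypothesis should be made explicit or a different argument (e.g.\ passing to a subsequence and using that $W_n\rightharpoonup W$ in $L^1$ plus a Mazur-type convexity argument gives a.e.\ strong approximation) would be needed for the full generality. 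Second, the test function you introduce is more cleanly taken to be $\chi_{F\cap S}(Z-\zeta)\in L^\infty(\Omega;H)$ rather than $\zeta+\chi_{F\cap S}(Z-\zeta)$; the extra constant $\zeta$ is harmless but adds nothing, and the duality $(L^1(\Omega;H))'\cong L^\infty(\Omega;H)$ you invoke implicitly does use that $H$ has the Radon--Nikodym property, which is automatic for Hilbert spaces. With those caveats noted, the argument is sound.
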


\subsection{Regularity}

\noindent Concerning the regularity of the solution $u$ of the DNLE, we refer for example to \cite{Ivanov}, \cite{MR1218742}, \cite{MR1156216}.

\noindent \textbf{(Theorem 2.1 from \cite{Ivanov}- inner H\"{o}lder estimate)}  Let $u$ be a weak solution of the DNLE. Then
   \begin{equation}\label{regularity_DNLE}
  u \in C_{loc}^{\alpha/p, \alpha}([0,T]\times \Omega) \text{ for some }\alpha \in (0,1).\end{equation}
   Moreover, for every cylinder $Q'=[\epsilon,T]\times \Omega'$, $\overline{\Omega'} \subset \Omega, \ \epsilon>0$, we have
      \begin{equation}\label{estim_cylinder}
      \sup_{(t,x)(t',x')\in  Q'} \frac{|u(t,x)-u(t',x')|}{(|t-t'|^m+|x-x'|^m )^{\alpha/p}}\leq K,
      \end{equation}
      where $\alpha \in (0,1)$ and $K>0$ depend only on the $T$, $\Omega'$ and the data.

\noindent \textbf{(Theorem 2.2. from \cite{Ivanov}- H\"{o}lder estimate up to the boundary)}  If $\Omega$ has regular boundary then
  $$u \in C_{loc}^{\alpha/p, \alpha}([0,T]\times \Omega) \text{ for some } \alpha \in (0,1)$$
   and $u$ satisfies an estimate similar to \eqref{estim_cylinder}.

\subsection{Distance to the boundary function}\label{subsection_dist_bdry}

We collect some properties of the \emph{distance to the boundary function} for which we refer to \cite{Trudinger} and \cite{MR0282058}.
Let $d:\overline{\Omega}\rightarrow [0,+\infty)$ be given by
$$d(x)=\text{dist}(x,\partial \Omega)=\min\{|x-z|:z \in \partial \Omega\},$$
where $|\cdot|$ is the Euclidean norm of $\mathbb{R}^d$.
In terms of $d(x)$ we define the sets
$$\Omega_{I,r} = \{x \in \overline{\Omega}:d(x)> r\},$$
$$\Omega_{r}=\Omega \setminus \overline{\Omega_{I,r}}= \{x \in \overline{\Omega}:d(x)< r\},$$
and we remark that, for all small $r>0$,
$$\partial \Omega_{I,r}=\{x \in \Omega: d(x)= r \}.$$

\begin{lemma}\label{properties_d(x)}(\textbf{Properties of the distance to the boundary})
Let $\Omega\subset \mathbb{R}^N$ be a bounded domain with boundary $\partial \Omega$ of class $C^2$. Then
\begin{enumerate}
  \item there is a constant $\xi_0 \in \mathbb{R}_+$ such that for every $x \in \Omega_{\xi_0}$, there is a unique $h(x) \in \partial \Omega$ which realizes the distance
$$d(x)=|x-h(x)|.$$
Moreover, $d(x) \in C^2(\Omega_{\xi_0}),$ and for all $r\in [0,\xi_0)$ the function $H_r:\partial ( \overline{\Omega_r}) \cap \Omega \rightarrow \partial \Omega$ defined by $H_r(x)=h(x)$ is a homeomorphism.
  \item Function $d(x)$ is Lipschitz with constant $1$, i.e.
  $$|d(x)-d(y)| \leq |x-y|.$$
  Moreover,
  \begin{equation}\label{estim_grad_d(x)}
  0<c \leq |\nabla d(x)|\leq 1, \quad \forall x\in \Omega_{\xi_0},
  \end{equation}
  and there exists a constant $K>0$ such that
$$-K \leq \partial^2_{ij} d(x) \leq K ,   \quad \forall x \in {\Omega}_{\xi_0}, \forall i,j=1,N.$$
\end{enumerate}
\end{lemma}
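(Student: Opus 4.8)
The plan is to derive both parts of Lemma~\ref{properties_d(x)} from the standard \emph{tubular neighbourhood} description of a $C^2$ hypersurface. The Lipschitz bound in part~2 is the one elementary ingredient and I would dispatch it first: for any $z\in\partial\Omega$ one has $d(x)\le|x-z|\le|x-y|+|y-z|$, and taking the infimum over $z$ gives $d(x)\le|x-y|+d(y)$; exchanging the roles of $x$ and $y$ yields $|d(x)-d(y)|\le|x-y|$. In particular $d$ is differentiable almost everywhere with $|\nabla d|\le1$, and once we know $d\in C^1(\Omega_{\xi_0})$ this upper bound holds everywhere on $\Omega_{\xi_0}$.

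Next I would construct $\xi_0$. Since $\partial\Omega$ is $C^2$, the inner unit normal field $\nu\colon\partial\Omega\to S^{N-1}$ is of class $C^1$. Define $\Psi(z,s)=z+s\,\nu(z)$ for $z\in\partial\Omega$ and $s\in\mathbb{R}$. At $s=0$ the differential $D\Psi(z,0)$ maps the tangent space of $\partial\Omega$ at $z$ isomorphically onto itself and the $s$-direction onto $\nu(z)$, hence is invertible; by the inverse function theorem together with the compactness of $\partial\Omega$ there is $\xi_0>0$ such that $\Psi$ restricts to a $C^1$ diffeomorphism of $\partial\Omega\times[0,\xi_0)$ onto a one-sided neighbourhood $N_{\xi_0}\subset\overline{\Omega}$ of $\partial\Omega$, and, after shrinking $\xi_0$ if necessary, $\Omega_{\xi_0}\subset N_{\xi_0}$. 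A direct comparison argument then shows that if $x=\Psi(z,s)$ with $0\le s<\xi_0$, then $z$ is the \emph{unique} nearest point of $\partial\Omega$ to $x$ and $d(x)=s$. This gives the uniqueness of $h(x)$, and writing $\pi$ for the projection onto the first factor we obtain $h=\pi\circ\Psi^{-1}$ and $d=s\circ\Psi^{-1}$ on $\Omega_{\xi_0}$, so that both $h$ and $d$ are at least $C^1$ there. The improvement to $d\in C^2(\Omega_{\xi_0})$ is the classical regularity statement for the distance function near a $C^2$ boundary (see \cite{Trudinger,MR0282058}): the $C^2$ smoothness of $\partial\Omega$ is precisely what is needed.

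Finally I would read off the remaining assertions. Differentiating the identity $d(\Psi(z,s))=s$ shows $\nabla d(x)=\nu(h(x))$ for $x\in\Omega_{\xi_0}$, so $|\nabla d|\equiv1$ there; in particular the two-sided bound $0<c\le|\nabla d|\le1$ holds with any $c\le1$. For the Hessian, the construction gives $d\in C^2$ up to and including $\partial\Omega$ on the compact set $\overline{\Omega_{\xi_0}}$, so $K:=\sup_{\overline{\Omega_{\xi_0}}}\max_{i,j}|\partial^2_{ij}d|<\infty$ and the stated bound follows. For the homeomorphism property, fix $r\in[0,\xi_0)$: since $d\in C^2$ with $|\nabla d|=1$, the set $\partial(\overline{\Omega_r})\cap\Omega=\{x:d(x)=r\}$ is exactly $\Psi(\partial\Omega\times\{r\})$, a $C^1$ hypersurface, and on it $H_r=h=\pi\circ\Psi^{-1}$ is a continuous bijection onto $\partial\Omega$ whose inverse is the continuous map $z\mapsto\Psi(z,r)=z+r\,\nu(z)$; hence $H_r$ is a homeomorphism.

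The main obstacle is the one-derivative gain $d\in C^2$, which is not visible from the $C^1$ diffeomorphism $\Psi$ alone; this is the classical Gilbarg--Trudinger/Krantz--Parks lemma, and I would invoke it rather than reprove it. The other delicate point is checking $\Omega_{\xi_0}\subset N_{\xi_0}$ together with the uniqueness of the nearest boundary point there, which amounts to choosing $\xi_0$ small enough that no point of $\Omega_{\xi_0}$ is equidistant from two distinct points of $\partial\Omega$ --- again a standard consequence of the bounded curvature of a $C^2$ boundary.
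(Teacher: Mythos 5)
Your proposal is correct, and it fills in a proof the paper itself does not give: the paper merely states this lemma in the Appendix and refers to Gilbarg--Trudinger and Serrin. Your argument is the standard one behind those references --- the Lipschitz estimate by the triangle inequality, the tubular-neighbourhood parametrization $\Psi(z,s)=z+s\,\nu(z)$ giving uniqueness of the foot point and $d\circ\Psi=s$, the identity $\nabla d=\nu\circ h$ (hence $|\nabla d|\equiv1$), and the classical one-derivative gain $d\in C^2$ for a $C^2$ boundary (Gilbarg--Trudinger Lemma 14.16), which you rightly flag as the one ingredient not visible from $\Psi$ being merely a $C^1$ diffeomorphism.

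One small tightening you could make: once you know $d(\Psi(z,s))=s$ for $s\in[0,\xi_0)$, the image of $\Psi$ is automatically all of $\Omega_{\xi_0}$ (the foot-of-perpendicular argument shows every $x$ with $d(x)<\xi_0$ is of the form $\Psi(w,d(x))$), so no extra shrinking of $\xi_0$ is needed for the inclusion $\Omega_{\xi_0}\subset N_{\xi_0}$.
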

\noindent Notice that this $\xi_0$ can be characterized as follows:
$$\displaystyle{\xi_0=\{ \min_{\overline{x} \in \partial \Omega} \max_{r>0} r : B_r(\overline{x}+r  \nu) \text{ is tangent at }\partial \Omega \text{ in }\overline{x}  \},}$$
where $\nu$ is the inward unit normal at $\partial \Omega$ in $x_0$. We observe that $d(x_0+r \nu)=R$ and
\begin{equation}\label{boundary_property}
\Omega_r \subset \bigcup_{y \in \partial \Omega_{I,r}} B_r(y).
\end{equation}

Throughout the paper we have constantly used the notation $\xi_0$ with the properties stated above.

\end{section}

\section{Comments and open problems}

\noindent $\bullet$ In this paper we have discussed only the case slow diffusion case $m(p-1)>1$ and the quasilinear case $m(p-1)=1$. The fast diffusion case $m(p-1)<1$ produces different results and thus it needs different techniques. For this last case we mention the results of Savar\'{e} and Vespri (\cite{MR1285092}) about the asymptotic behaviour of the DNLE in the singular case. In that paper the authors prove the convergence to an asymptotic profile for a sequence of times $t_n \rightarrow T$ , $T$ being the extinction time. The uniform convergence for all times and the rate of convergence in the fast diffusion case, for both DNLE and PLE, remain an open problem at this moment. However, the fast diffusion regimes for the PME and the PLE have been much discussed in the literature, we mention \cite{BDGV09, FeireislSimondon}.

\noindent $\bullet$ We presented only a formal description of the self similar solutions of the DNLE in the case $m(p-1)>1$. We do not offer a complete characterization of such functions since this beyond the purpose of our paper. The problem is interesting and it deserves a separate study itself.

\noindent $\bullet$ Our result in the quasilinear case is not as sharp as the result in the degenerate case. Indeed, we only prove convergence in relative
error. The problem of a rate of convergence similar to Theorem \ref{Th_rate_conv_DNLE} is still open, except in the linear case  $m=1$, $p=2$, where a
representation as infinite series follows from the Fourier analysis of the solution.

\noindent $\bullet$ For the Cauchy problem we mention \cite{MR2602927} where the authors prove an $L^1$-algebraic decay of the non-negative solution to a Barenblatt-type solution for the case $\frac{N-p}{N(p-1)}<m<\frac{N-p+1}{N(p-1)}$, and they estimate its rate of convergence.

\noindent $\bullet$ More general problems of this type can be considered by similar techniques. Let us mention the doubly nonlinear equation with mixed boundary conditions or $p$-Laplacian type equations with variable coefficients.

\noindent {\large\bf Acknowledgments}

\noindent Both authors partially supported by the
Spanish Project MTM2008-06326-C0-01.

\noindent \textbf{Keywords.} doubly nonlinear equation, slow diffusion, asymptotic behaviour, self-similar solution, convergence rates.

\noindent \textbf{Mathematics Subject Classification.} 35B40, 35B45, 35B65, 35R35, 35K55, 35K65.

\hfill

\noindent (a) Diana Stan. Consejo Superior de Investigaciones Cientificas, Instituto de Ciencias Matematicas, C/Nicolas Cabrera, 13-15, Campus de Cantoblanco, 28049, Madrid, Spain. E-mail address:{{\tt~diana.stan@icmat.es}.\\
AND\\
Departamento de Matem\'{a}ticas, Universidad
Aut\'{o}noma de Madrid, Campus de Cantoblanco, 28049 Madrid, Spain. \\

\noindent (b) Juan Luis V\'azquez. Departamento de Matem\'{a}ticas, Universidad
Aut\'{o}noma de Madrid, Campus de Cantoblanco, 28049 Madrid, Spain.
E-mail address:{{\tt~juanluis.vazquez@uam.es}.

\end{document}